\theoremstyle{plain}
\newtheorem{theorem}{Theorem}[section]
\newtheorem{lemma}[theorem]{Lemma}
\newtheorem{corollary}[theorem]{Corollary}
\newtheorem{proposition}[theorem]{Proposition}
\theoremstyle{definition}
\newtheorem*{definition}{Definition}
\theoremstyle{remark}
\newtheorem*{remark}{Remark}
\newtheorem*{notation}{Notation}
\numberwithin{equation}{section}
\def\C{\mathbb C}
\def\F{\mathbb F}
\def\K{\mathbb K}
\def\Z{\mathbb Z}
\def\T{\mathcal T}
\def\E{\mathcal E}
\def\R{\widehat R}
\def\L{\widehat L}
\def\u{\mathfrak {u}}
\def\e{\varepsilon}
\def\rank{\text{rank}}
\def\supp{\text{supp}}
\begin{document}

\title{Representations of Finite Unipotent Linear Groups \\ by the Method of Clusters}

\date{May 17, 2006}
\author{Ning Yan \\ \\ 
       Stanford University \\ \\
        e-mail: ning.now@gmail.com }

\maketitle

\section*{Abstract}

The general linear group $GL(n, \K)$ over a field $\K$
contains a particularly prominent subgroup $U(n, \K)$, consisting of all the
upper triangular unipotent elements. In this paper we
are interested in the case when $\K$ is the finite field $\F_q$,
and our goal is to better understand the representation
theory of $U(n,\F_q)$. The complete classification of
the complex irreducible representations of this group has long
been known to be a difficult task. The orbit method of Kirillov,
famous for its success when $\K$ has characteristic $0$, is
a natural source of intuition and conjectures, but in our case
the relation between coadjoint orbits and
complex representations is still a mystery. 
Here we introduce a natural variant of the orbit method, 
in which the central role is played by certain clusters of coadjoint orbits.
This ``method of clusters'' leads to the construction of a subring
in the representation ring of $U(n,\F_q)$ that is rich in structure but
pleasantly comprehensible. The cluster method also has
many of the major features one would expect from the 
philosophy of orbit method.

\section*{Introduction}

Over 40 years ago A. A. Kirillov \cite{Kirillov-1962} created the method of orbits, which not only
gives beautifully compact and complete answers, all in terms of coadjoint orbits,
to the main questions of representation theory for nilpotent Lie groups, but also has become a powerful tool and a 
source of intuition for the study of other groups of Lie type.

The upper triangular unipotent group $U(n)$ has been the ideal example for which the orbit method
works perfectly. However, there remains a tantalizing open question: The classification of the coadjoint orbits.

The present work starts with two simple observations. First, the coadjoint action of  $U(n, \F_q)$ on the
dual space of its Lie algebra can be extended to a double action.
Second, the group algebra of $U(n, \F_q)$ has a natural basis, the {\it Fourier basis}. The double 
action and the Fourier basis are connected through the regular representation of $U(n, \F_q)$ on its 
group algebra, and this connection leads to a rich theory of {\it cluster characters} [explain more]
with many of the major features one would expect from the philosophy of orbit method. These features
include an one-to-one correspondence between the cluster characters and the {\it coadjoint clusters},
a character formula expressing the character in terms of the cluster, and equivalence between character
operations (tensor product, induction and restriction) and cluster operations. A remarkable new feature is the
canonical decomposition of the regular representation into a direct sum of {\it cluster modules}.

The collection of all cluster characters form an orthogonal basis for the $\C$-linear
space of {\it cluster functions} on $U(n, \F_q)$. The collection of all $\Z$-linear combinations
of cluster characters form a subring in the representation ring of $U(n, \F_q)$. 
This subring is generated by the {\it primary} cluster characters, and each cluster
character has a unique {\it primary decomposition} into a tensor product of primary cluster characters.

Section \ref{discrete} is inspired by a paper of G.I.~Lehrer \cite{Lehrer74}. Lehrer was interested in the
{\it discrete series character} of $U(n, \F_q)$, which is by definition the common restriction of the 
discrete series characters of $GL(n, \F_q)$. He derived
a tensor product decomposition of this character, and then further decomposed each factor into a
multiplicity-free sum of irreducible components.
We noticed that these irreducible components are precisely the primary cluster characters of our theory.
This led us to the discovery of an explicit decomposition of the discrete series character into a
sum of cluster characters. Moreover, we were able to
construct a realization of the discrete series representation as a canonical submodule in the regular
representation of $U(n, \F_q)$.

To close this introduction we draw the reader's attention to the series of papers on the
representation theory of $U(n, \F_q)$, by C.A.M.~Andr\'e 
\cite{Carlos95a, Carlos95b, Carlos96, Carlos98, Carlos01, Carlos02, Carlos04}.
Throughout his papers Andr\'e makes the assumption $p\ge n$ in order to have
the benefit of a well-behaved exponential map. In this case an one-to-one correspondence between the
irreducible characters and the coadjoint orbits was established by Kazhdan. Andr\'e defines certain
``basic sums'' of  coadjoint orbits, and studies the characters associated to the orbit sums via the
above correspondence. Note that here the popular mode of thinking is to go from $\F_q$ to its algebraic
closure, so that the coadjoint orbits can be treated as geometric objects. It transpired that the orbit
sums of Andr\'e are precisely the coadjoint clusters, and the characters in his work are the cluster
characters. The main results of Andr\'e agree well with our own theory, while his approach is an impressive
tour de force in algebraic geometry.

\section{Basic notations}\label{notation}

Let $U(n,\K)$ be the group of upper triangular unipotent matrices over a field $\K$,
$\u(n,\K)$ be the $\K$-algebra of upper
triangular nilpotent matrices over $\K$, and 
$\u^*(n,\K)$ be the dual $\K$-linear space of
$\u(n,\K)$. An element in $\u^*(n,\K)$ is a 
$\K$-linear map from $\u(n,\K)$ to $\K$.

There are three natural ways for $U(n,\K)$ to act on $\u(n,\K)$.
These are defined by:
\begin{eqnarray*}
\textit{left action}  \qquad X & \mapsto & g\cdot X  \\
\textit{right action} \qquad X & \mapsto & X\cdot g \\
\textit{adjoint action}       \qquad X & \mapsto & g\cdot X\cdot g^{-1}
\end{eqnarray*}
where $g\in U(n,\K)$ and $X\in\u(n,\K)$.
The dot $(\,\cdot\,)$ means multiplication of matrices.
The left and right actions commute, and together they give a double 
action of $U(n,\K)$ on $\u(n,\K)$.
The {\it adjoint orbit} $C(X)$ and the {\it adjoint cluster} $K(X)$ of $X$ are
\begin{eqnarray*}
C(X) & = & \left\{\, g\cdot X\cdot g^{-1}:\quad g\in U(n,\K)\,\right\}, \\
K(X) & = & \left\{\, g\cdot X\cdot g': \quad g,\,g'\in U(n,\K)\, \right\},
\end{eqnarray*}
with $C(X)\subset K(X)$. Each adjoint cluster is a union of adjoint orbits.

Correspondingly we have the following actions of $U(n,\K)$ on
$\u^*(n,\K)$:
\begin{eqnarray*}
\textit{left action} \qquad (g*\lambda)(X) & = & \lambda(X\cdot g) \\
\textit{right action} \qquad (\lambda *g)(X) & = & \lambda(g\cdot X) \\
\textit{coadjoint action} \qquad\qquad \lambda^g(X) & = & \lambda(g^{-1}\cdot X\cdot g)
\end{eqnarray*}
where $g$ and $X$ are as above, and $\lambda\in\u^*(n,\K)$. The left 
and right actions commute, and together they give a double action of $U(n,\K)$ on
$\u^*(n,\K)$. Moreover
\[
g*\lambda *g^{-1}=\lambda^g.
\]
The {\it coadjoint orbit} $\Omega(\lambda)$ and the {\it coadjoint cluster} $\Psi(\lambda)$ of $\lambda$ are
\begin{eqnarray*}
\Omega(\lambda) & = & \left\{\, g* \lambda *g^{-1}:\quad g\in U(n,\K)\,\right\}, \\
\Psi(\lambda) & = & \left\{\, g* \lambda * g': \quad g,\,g'\in U(n,\K)\, \right\},
\end{eqnarray*}
with $\Omega(\lambda)\subset\Psi(\lambda)$.
Each coadjoint cluster is a union of coadjoint orbits.

There is a natural correspondence between adjoint orbits in $\u(n,\K)$ and conjugacy classes
in $U(n,\K)$ via the map $X \mapsto I+X$, by virtue of the identity 
$I+g\cdot X\cdot g^{-1}=g\cdot (I+X)\cdot g^{-1}$. Thus each adjoint cluster naturally corresponds
to a union of conjugacy classes, which we refer to as a {\it conjugacy cluster}.

We use $\C[U(n,\K)]$, $\C[\u(n,\K)]$ and
$\C[\u^*(n,\K)]$ to denote the respective spaces of
complex-valued functions on $U(n,\K)$, $\u(n,\K)$ and $\u^*(n,\K)$.

\section{Cluster modules and their characters}\label{module}

Let $\theta$ be a fixed non-trivial
homomorphism from the additive group of $\F_q$ to the multiplicative group of $\C$. The {\it Fourier basis} for $\C[U(n,\F_q)]$ consists of
the elements
\[
\left\{\, v(\lambda):\ \lambda\in\u^*(n,\F_q)\,\right\}
\]
defined by
\begin{eqnarray*}
v(\lambda):\quad g & \mapsto & \theta[\lambda(g-I)].
\end{eqnarray*}
This is an orthonormal basis for ${\mathbb C}[U(n,\F_q)]$ as a ${\mathbb C}$-linear space. Note that
\begin{equation}\label{3.0.1}
v(\lambda_1 + \lambda_2)\,=\, v(\lambda_1)\cdot v(\lambda_2)\,. 
\end{equation}

Consider the regular representation of $U(n,\F_q)$ on
${\mathbb C}[U(n,\F_q)]$ given by the left action. For 
$v\in{\mathbb C}[U(n,\F_q)]$ the left action of $g\in U(n,\F_q)$
sends $v$ to $g* v$ defined by
\begin{eqnarray*}
g* v:\quad g' & \mapsto & v(g'g).
\end{eqnarray*}
In a sense, one can say that the representation theory of a finite group is all about 
decomposing the regular representation. In the
case of $U(n,\F_q)$, it is particularly fruitful to consider the regular representation
in terms of the Fourier basis $\{\,v(\lambda)\,\}$.

\begin{proposition}\label{3.1}
\[
g* [v(\lambda)]=[v(\lambda)(g)]\cdot v(g*\lambda).
\]
\end{proposition}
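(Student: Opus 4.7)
The plan is to verify the identity by evaluating both sides at an arbitrary $g'\in U(n,\F_q)$ and unpacking all definitions until both expressions reduce to the same value of $\theta$.

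First I would compute the left-hand side: by definition of the left action on functions, $(g*v(\lambda))(g')=v(\lambda)(g'g)$, and then by the definition of the Fourier basis element $v(\lambda)$, this equals $\theta\bigl[\lambda(g'g-I)\bigr]$. Note that $g'g-I$ is strictly upper triangular (since the set of unipotent upper triangular matrices forms a group), so it indeed lies in $\u(n,\F_q)$, which is where $\lambda$ is defined.

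Next I would compute the right-hand side. The scalar factor $v(\lambda)(g)=\theta[\lambda(g-I)]$ is immediate. For the second factor, the definition of the left action on $\u^*(n,\F_q)$ gives $(g*\lambda)(X)=\lambda(X\cdot g)$, so
\[
v(g*\lambda)(g')=\theta\bigl[(g*\lambda)(g'-I)\bigr]=\theta\bigl[\lambda((g'-I)\cdot g)\bigr]=\theta\bigl[\lambda(g'g-g)\bigr].
\]
(Here $g'-I$ lies in $\u(n,\F_q)$, and multiplying by $g$ on the right keeps one inside $\u(n,\F_q)$, so the expression is well-defined.) Multiplying the two scalars, and using that $\theta$ is a group homomorphism together with the $\F_q$-linearity of $\lambda$, gives
\[
\theta\bigl[\lambda(g-I)\bigr]\cdot\theta\bigl[\lambda(g'g-g)\bigr]=\theta\bigl[\lambda(g-I)+\lambda(g'g-g)\bigr]=\theta\bigl[\lambda(g'g-I)\bigr],
\]
which matches the left-hand side.

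There is really no serious obstacle here; the whole statement is a direct computation once the definitions of $v(\lambda)$, the left action on functions, and the left action on $\u^*(n,\F_q)$ are all written down side by side. The only point deserving a sanity check is that every element on which we evaluate $\lambda$ genuinely lies in $\u(n,\F_q)$, which follows from the fact that $U(n,\F_q)=I+\u(n,\F_q)$ is closed under multiplication. One could also phrase the computation more slickly by invoking (\ref{3.0.1}): the identity $g'g-I=(g-I)+(g'g-g)$ together with the multiplicativity in (\ref{3.0.1}) immediately rewrites the left-hand side as a product of two Fourier basis elements, which then are identified with the two factors on the right.
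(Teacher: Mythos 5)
Your proposal is correct and follows essentially the same computation as the paper: both sides are evaluated at an arbitrary $g'$, and the identity reduces to the decomposition $g'g-I=(g'g-g)+(g-I)$ together with the definitions of $v(\lambda)$ and the left action on $\u^*(n,\F_q)$. The paper simply runs the same calculation starting from the left-hand side, so there is nothing further to add.
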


\begin{proof}
The function $v(\lambda)$ sends $g'$ to $\theta[\lambda(g'-I)]$, so
$g* [v(\lambda)]$ sends $g'$ to $\theta[\lambda(g'g-I)]$. Now
\begin{eqnarray*}
{\theta[\lambda(g'g-I)]} & = & {\theta[\lambda(g'g-g+g-I)]} \\
{} & = & {\theta[\lambda(g-I)]}\cdot{\theta[\lambda((g'-I)\cdot g)]} \\
{} & = & {\theta[\lambda(g-I)]}\cdot{\theta[(g*\lambda)(g'-I)]} \\
{} & = & v(\lambda)(g)\cdot v(g*\lambda)(g').
\end{eqnarray*}
This gives the proposition. \end{proof}

The first of many consequences of this proposition is that for
any orbit $L$ in $\u^*(n,\F_q)$ under the left action of $U(n,\F_q)$, the $\C$-linear subspace in
${\mathbb C}[U(n,\F_q)]$ spanned by the set
\[
\left\{\, v(\lambda): \quad \lambda\in L \, \right\}
\]
is a submodule of the regular representation of $U(n,\F_q)$. We denote this submodule
by  $V(L)$, and let $\chi(L)$ be the trace of $U(n,\F_q)$ on $V(L)$.
The dimension of $\chi(L)$ is equal to the size of $L$, and
\begin{eqnarray*}
{\mathbb C}[U(n,\F_q)] & = & \bigoplus_{L} V(L)\,,
\end{eqnarray*}
where the sum is over all orbits in $\u^*(n,\F_q)$ under the left action of $U(n,\F_q)$.
Clearly $V(L)$ is generated by $v(\lambda)$ for any $\lambda\in L$, so we can write
$V(\lambda)=V(L)$ and $\chi(\lambda)=\chi(L)$. In this notation we have $V(\lambda)=V(g*\lambda)$.

\begin{definition}
We will refer to the modules $V(\lambda)$ as {\it cluster modules} and the
characters $\chi(\lambda)$ as {\it cluster characters}.
\end{definition}

\begin{remark}
The notation $\chi(0)$ naturally stands for the trivial character, which of course is the cluster character
associated to $\lambda=0$.
\end{remark}

\begin{proposition}\label{3.2}
The cluster modules $V(\lambda)$ and $V(\lambda *g)$ are isomorphic.
\end{proposition}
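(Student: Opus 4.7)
The plan is to produce a $U(n,\F_q)$-equivariant linear bijection from $V(\lambda)$ onto $V(\lambda * g)$, and the natural source is the ``right'' regular action of $U(n,\F_q)$ on $\C[U(n,\F_q)]$, which commutes with the left regular action of Proposition \ref{3.1}. I would define $R_g : \C[U(n,\F_q)] \to \C[U(n,\F_q)]$ by $(R_g v)(g') = v(gg')$. This is invertible with inverse $R_{g^{-1}}$, and it commutes with every left-action operator $h*(\cdot)$, since both $R_g(h*v)$ and $h*(R_g v)$ evaluate $v$ at $gg'h$. Consequently $R_g$ automatically carries any submodule of the left regular representation onto another such submodule.

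The key computation is a near-copy of Proposition \ref{3.1}. Expanding
\[
\lambda(gg' - I) \,=\, \lambda\bigl(g(g' - I)\bigr) + \lambda(g - I) \,=\, (\lambda * g)(g' - I) + \lambda(g - I),
\]
and using the multiplicativity of $\theta$, I would obtain
\[
R_g\, v(\lambda) \,=\, v(\lambda)(g)\cdot v(\lambda * g).
\]
Thus $R_g$ sends the Fourier vector $v(\lambda)$ to a nonzero scalar multiple of $v(\lambda * g)$.

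To conclude, I would use the fact that left and right actions of $U(n,\F_q)$ on $\u^*(n,\F_q)$ commute: if $L$ is the left orbit of $\lambda$, then $L*g$ is exactly the left orbit of $\lambda * g$. Hence $R_g$ carries the Fourier spanning set $\{v(\mu) : \mu \in L\}$ of $V(\lambda)$ onto nonzero scalar multiples of the Fourier spanning set $\{v(\mu * g) : \mu \in L\}$ of $V(\lambda * g)$, yielding a linear bijection between these spaces. Since $R_g$ commutes with the left $U(n,\F_q)$-action, this bijection is a module isomorphism.

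I do not expect a substantive obstacle here; the only delicate point is notational. The paper has so far only introduced the left regular action $h*v$, so the proof requires naming the commuting right translation $R_g$ and verifying (in one line each) that it commutes with $h*(\cdot)$ and that it intertwines the Fourier basis with its right-translate. Everything else is a direct application of the identity for $R_g v(\lambda)$ displayed above.
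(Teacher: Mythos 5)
Your proposal is correct and follows essentially the same route as the paper: the paper's proof also defines the right translation $(v*g)(g')=v(gg')$, establishes the identity $[v(\lambda)]*g=[v(\lambda)(g)]\cdot v(\lambda*g)$ by the same computation as in Proposition \ref{3.1}, and concludes that right translation carries $V(\lambda)$ onto $V(\lambda*g)$. The only difference is that you spell out explicitly the commutation with the left action and the matching of left orbits, which the paper leaves implicit.
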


\begin{proof}
The isomorphism is given by the right action of $U(n,\F_q)$ on 
$\C[U(n,\F_q)]$. For
$v\in\C[U(n,\F_q)]$ the right action by $g$
sends $v$ to $v*g$ defined by
\begin{eqnarray*}
v*g:\quad g' & \mapsto & v(gg').
\end{eqnarray*}
Similar to the case of the left action in Proposition \ref{3.1}, we have
\[
[v(\lambda)] * g=[v(\lambda)(g)]\cdot v(\lambda *g)\,, 
\]  
so the right action by $g$ sends $V(\lambda)$ to $V(\lambda *g)$. \end{proof}

\begin{corollary}\label{3.3}
The cluster character $\chi(\lambda)$ only depends on the coadjoint cluster 
$\Psi$ to which $\lambda$ belongs. We can therefore write $\chi(\Psi)=\chi(\lambda)$.
\end{corollary}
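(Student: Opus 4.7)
The plan is to combine two invariance properties of the cluster character already established in the text and observe that the coadjoint cluster is generated precisely by these two kinds of moves.

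First I would record the left-invariance: by construction $V(\lambda) = V(L)$ depends only on the left orbit $L$ containing $\lambda$, so $\chi(\lambda) = \chi(g * \lambda)$ for every $g \in U(n,\F_q)$. This is immediate from the definition of $V(\lambda)$ as the span of $\{v(\mu) : \mu \in L\}$ and was already noted in the discussion following Proposition \ref{3.1}.

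Next I would invoke Proposition \ref{3.2}, which provides right-invariance at the level of characters: $V(\lambda)$ and $V(\lambda * g')$ are isomorphic $U(n,\F_q)$-modules, hence $\chi(\lambda) = \chi(\lambda * g')$ for every $g' \in U(n,\F_q)$.

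Finally, recall from Section \ref{notation} that
\[
\Psi(\lambda) = \{\, g * \lambda * g' : g, g' \in U(n,\F_q)\,\}.
\]
Given any element $\mu \in \Psi(\lambda)$, write $\mu = g * \lambda * g'$. Applying right-invariance first and then left-invariance yields
\[
\chi(\mu) = \chi(g * \lambda * g') = \chi(g * \lambda) = \chi(\lambda),
\]
where the first equality uses Proposition \ref{3.2} with $\lambda$ replaced by $g * \lambda$, and the second uses that $g * \lambda$ lies in the same left orbit as $\lambda$. Since there is no real obstacle — both invariance statements are already in hand — the only thing to be careful about is the order of the two reductions, which is inessential because the left and right actions commute. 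We may therefore unambiguously define $\chi(\Psi) := \chi(\lambda)$ for any $\lambda \in \Psi$.
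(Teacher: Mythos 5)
Your proof is correct and is exactly the argument the paper intends: the corollary is stated without proof because it follows immediately from the noted identity $V(\lambda)=V(g*\lambda)$ together with Proposition \ref{3.2}, which is precisely how you argue. Nothing is missing.
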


\begin{theorem}\label{3.4}
The regular character of $U(n,\F_q)$ is equal to the sum
\[
\sum_{\Psi}\frac{|\Psi|}{\deg\chi(\Psi)}\cdot\chi(\Psi)
\]
over the collection of coadjoint clusters in $\u^*(n,\F_q)$.
\end{theorem}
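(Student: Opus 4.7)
The plan is to start from the direct sum decomposition
\[
\C[U(n,\F_q)] \;=\; \bigoplus_{L} V(L)
\]
over the left orbits $L$ of $U(n,\F_q)$ on $\u^*(n,\F_q)$, which is already established in the excerpt. Passing to characters, the regular character equals $\sum_L \chi(L)$, and the aim is to reorganize this sum according to coadjoint clusters. Since a coadjoint cluster $\Psi$ is by definition the orbit under the commuting double action, it is in particular closed under the left action, and therefore is a disjoint union of left orbits. Grouping accordingly yields
\[
\sum_{L}\chi(L)\;=\;\sum_{\Psi}\sum_{L\subset\Psi}\chi(L),
\]
where the outer sum runs over coadjoint clusters.

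Next I would invoke Corollary~\ref{3.3} to collapse the inner sum: every left orbit $L\subset\Psi$ contributes the same character $\chi(\Psi)$, so the inner sum becomes $N_\Psi\cdot\chi(\Psi)$, where $N_\Psi$ is the number of left orbits contained in $\Psi$. To evaluate $N_\Psi$, I would argue that the right action of $U(n,\F_q)$ permutes the left orbits inside $\Psi$ transitively and in a size-preserving way: if $\lambda_2=g*\lambda_1*g'$, then the left orbit of $\lambda_2$ is the right translate of the left orbit of $\lambda_1$ by $g'$ (using that the left and right actions commute), hence the two orbits have the same cardinality. Consequently all left orbits $L\subset\Psi$ have a common size, and
\[
N_\Psi\;=\;\frac{|\Psi|}{|L|}.
\]

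It remains to identify $|L|$ with $\deg\chi(\Psi)$. Since the Fourier basis $\{v(\lambda):\lambda\in\u^*(n,\F_q)\}$ is orthonormal in $\C[U(n,\F_q)]$, its elements are $\C$-linearly independent; hence the spanning set $\{v(\lambda):\lambda\in L\}$ for $V(L)$ is a basis, giving $\dim V(L)=|L|$. Therefore $\deg\chi(L)=|L|$, and since $\chi(L)=\chi(\Psi)$, we may write $|L|=\deg\chi(\Psi)$. Substituting back produces the claimed identity.

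The only step that requires any thought is the size-preservation of left orbits under the right action within a single cluster; the rest is bookkeeping on top of the decomposition already in hand and Corollary~\ref{3.3}. I would make sure to state explicitly at that step that commutativity of the left and right actions is what lets a right translation carry a left orbit to a left orbit bijectively.
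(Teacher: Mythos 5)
Your proposal is correct and follows essentially the same route as the paper: both rest on the decomposition of $\C[U(n,\F_q)]$ into the modules $V(L)$ indexed by left orbits, group the orbits by coadjoint cluster, and count the number of left orbits in a cluster $\Psi$ as $|\Psi|/\deg\chi(\Psi)$ (the paper phrases the equal-size step via the module isomorphisms of Proposition~\ref{3.2}, you phrase it via the right action carrying left orbits bijectively to left orbits, which is the same fact). No gap; nothing further needed.
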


\begin{proof}
For each cluster $\Psi$ let $V(\Psi)$ be the submodule in ${\mathbb C}[U(n,\F_q)]$
spanned by the set
\[
\left\{\, v(\lambda): \quad \lambda\in\Psi \, \right\}
\]
The dimension of $V(\Psi)$ is $|\Psi|$, and
\begin{eqnarray*}
{\mathbb C}[U(n,\F_q)] & = & \bigoplus_{\Psi} V(\Psi).
\end{eqnarray*}
By Proposition \ref{3.2}, $V(\Psi)$ is a direct sum of isomorphic copies of cluster modules, 
each copy corresponds to an orbit in $\Psi$ under the left action of $U(n,\F_q)$.
The trace of  $U(n,\F_q)$ on each copy is equal to $\chi(\Psi)$, and the 
number of such copies must be ${|\Psi|}/{\deg\chi(\Psi)}$.
We therefore have the decomposition in the theorem. \end{proof}

Let $L$ be an orbit in $\Psi$ under the left action of $U(n,\F_q)$. By Proposition \ref{3.1}
\begin{equation}\label{3.0.3}
\chi(\Psi)(g)=\sum_{g*\lambda=\lambda}v(\lambda)(g)\,,
\end{equation}
where the sum is over all elements of $L$ that are fixed under the left action
by $g$. 
The following theorem gives the character formula one would expect from the philosophy
of orbit method (see Kirillov \cite[Conjecture 2.2.1]{Kirillov-Variations}):

\begin{theorem}\label{3.5}
\[
\chi(\Psi)(g)=
\frac{\deg\chi(\Psi)}{|\Psi|}\cdot
\sum_{\lambda\in\Psi}\theta[\lambda(g-I)].
\]  
\end{theorem}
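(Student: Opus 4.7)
The strategy is to relate the trace formula (3.0.3), which sums over $g$-fixed points of one left orbit, to a sum over the \emph{entire} coadjoint cluster $\Psi$. Two ingredients combine: averaging (3.0.3) over all left orbits in $\Psi$, and showing that the contributions from non-fixed $\lambda$ cancel when the sum is regrouped by right orbits.

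\emph{Step one.} By the proof of Theorem~\ref{3.4}, every left orbit $L \subset \Psi$ has size $\deg\chi(\Psi)$, so there are $|\Psi|/\deg\chi(\Psi)$ such orbits; and $\chi(L) = \chi(\Psi)$ for each of them by Corollary~\ref{3.3}. Summing (3.0.3) over all left orbits $L \subset \Psi$ therefore yields
\[
\chi(\Psi)(g) \;=\; \frac{\deg\chi(\Psi)}{|\Psi|} \sum_{\substack{\lambda \in \Psi \\ g*\lambda=\lambda}} \theta[\lambda(g-I)].
\]
The theorem is now equivalent to the identity $\sum_{\lambda \in \Psi,\ g*\lambda \neq \lambda} \theta[\lambda(g-I)] = 0$.

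\emph{Step two.} I would prove this by decomposing $\Psi$ into right orbits. Fix such a right orbit $R$, a representative $\lambda_0 \in R$, and parametrise $h \in U(n,\F_q)$ as $h = I + Y$ with $Y \in \u(n,\F_q)$. Using $(\lambda_0 * h)(g-I) = \lambda_0(h(g-I)) = \lambda_0(g-I) + \lambda_0(Y(g-I))$, the sum $\sum_{\lambda \in R} \theta[\lambda(g-I)]$ reduces (after dividing by the size of the right stabilizer of $\lambda_0$) to $\theta[\lambda_0(g-I)]$ times the additive character sum
\[
\sum_{Y \in \u(n,\F_q)} \theta[\lambda_0(Y(g-I))].
\]
This last sum is $|\u(n,\F_q)|$ if $\lambda_0$ vanishes on the subspace $\u(n,\F_q)(g-I)$ --- which is exactly the condition $g*\lambda_0 = \lambda_0$ --- and is $0$ otherwise. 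The same computation moreover shows that in the ``fixed'' case $\theta[\lambda(g-I)]$ is actually constant on $R$.

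\emph{Conclusion and main obstacle.} Because the left and right actions commute, the property $g*\lambda_0 = \lambda_0$ is an invariant of the right orbit through $\lambda_0$. Hence every right orbit in $\Psi$ is either entirely fixed by $g$ (and contributes the same value $|R|\cdot\theta[\lambda_0(g-I)]$ to both sides of the required identity) or entirely non-fixed (and contributes $0$ to both). Summing over right orbits yields $\sum_{\lambda \in \Psi} \theta[\lambda(g-I)] = \sum_{\lambda \in \Psi,\ g*\lambda=\lambda} \theta[\lambda(g-I)]$ and completes the proof. The only conceptual hurdle is realising that the unwanted terms must be cancelled along \emph{right} orbits rather than within the left orbits of (3.0.3); once this structural point is in hand --- a direct consequence of the commutativity of the two actions --- the remaining step is a clean additive-character computation.
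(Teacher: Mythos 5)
Your proof is correct, and it reorganizes the argument in a genuinely different (and arguably cleaner) way than the paper. The paper fixes a single left orbit $L$, rewrites the fixed-point indicator in \eqref{3.0.3} as the inner product $\langle v(g*\lambda), v(\lambda)\rangle$, expands that inner product as an average over $g'\in U(n,\F_q)$, simplifies the summand to $\theta[(\lambda*g')(g-I)]$, and then invokes the claim that the map $(\lambda,g')\mapsto\lambda*g'$ from $L\times U(n,\F_q)$ onto $\Psi$ has constant fiber size $|U(n,\F_q)|\cdot|L|/|\Psi|$ --- a true but unproved assertion. You instead sum \eqref{3.0.3} over all left orbits in $\Psi$ (legitimate, since the proof of Theorem \ref{3.4} shows they all have size $\deg\chi(\Psi)$ and all carry the character $\chi(\Psi)$), reducing the theorem to the vanishing of $\sum_{g*\lambda\neq\lambda}\theta[\lambda(g-I)]$ over $\Psi$, and then dispose of this right-orbit by right-orbit via the additive character sum $\sum_{Y}\theta[\lambda_0(Y(g-I))]$ together with the observation that $g$-fixedness is constant on right orbits because the two actions commute. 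The underlying mechanism is the same in both proofs --- the identity $\theta[(\lambda*h)(g-I)]=\theta[\lambda(g-I)]\cdot\theta[\lambda(Y(g-I))]$ for $h=I+Y$ and orthogonality of additive characters --- but your bookkeeping replaces the paper's uniform-multiplicity claim with two facts that are either already established or immediate, and it makes explicit \emph{why} the full cluster sum agrees with the fixed-point sum: the non-fixed right orbits cancel internally. The only mild cost is that you rely on the equidistribution of left orbit sizes within $\Psi$ from Theorem \ref{3.4}, whereas the paper's argument needs only the single orbit $L$.
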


\begin{proof}
We can write equation \eqref{3.0.3} as
\[
\chi(\Psi)(g)=\sum_{\lambda\in L}\langle\, v(g*\lambda),\ v(\lambda)\, \rangle\cdot v(\lambda)(g)\,,
\]
or in other words
\[
\chi(\Psi)(g)=\frac{1}{|U(n, {\mathbb F_q})|}
\sum_{\lambda\in L}\sum_{g'}\left[v(g*\lambda-\lambda)(g')\cdot v(\lambda)(g)\right]\,,
\]
where the second sum is over all elements of $U(n, {\mathbb F_q})$.
The expression in the brackets can be simplified:
\begin{eqnarray*}
{} & {} & v(g*\lambda-\lambda)(g')\cdot v(\lambda)(g) \\
{} & = & \theta\left[(g*\lambda-\lambda)(g'-I)+\lambda(g-I)\right] \\
{} & = & \theta\left[\lambda(g'g-g)-\lambda(g'-I)+\lambda(g-I)\right] \\
{} & = & \theta\left[\lambda(g'g-g')\right]\\
{} & = & \theta\left[(\lambda *g')(g-I)\right]\,,
\end{eqnarray*}
so
\[
\chi(\Psi)(g)=\frac{1}{|U(n, {\mathbb F_q})|}
\sum_{\lambda\in L}\sum_{g'}\theta\left[(\lambda *g')(g-I)\right].
\]
If $\lambda$ runs through all elements of $L$ and $g'$ runs through all elements of
$U(n, {\mathbb F_q})$, then $\lambda *g'$ will run through the elements
of $\Psi$, each with multiplicity
\[
\frac{|U(n, {\mathbb F_q})|\cdot |L|}{|\Psi|}\,,
\]
so we have
\[
\chi(\Psi)(g)=
\frac{|L|}{|\Psi|}\cdot
\sum_{\lambda\in\Psi}\theta[\lambda(g-I)].
\]
Finally, since $|L|=\deg\chi(\Psi)$, we obtain
\[
\chi(\Psi)(g)=
\frac{\deg\chi(\Psi)}{|\Psi|}\cdot
\sum_{\lambda\in\Psi}\theta[\lambda(g-I)].
\]
\end{proof}

The character formula looks particularly elegant when we express it in
terms of the Fourier basis:
\begin{equation}\label{3.0.2}
\chi(\Psi)=
\frac{\deg\chi(\Psi)}{|\Psi|}\cdot
\sum_{\lambda\in\Psi}v(\lambda).
\end{equation}
Essentially, $\chi(\Psi)$ is obtained by averaging over the subset
$\{\,v(\lambda):\ \lambda\in\Psi\,\}$ of the basis. 
Note that the regular character of $U(n,\F_q)$ is equal to the sum
\[
\sum_{\lambda}v(\lambda)
\]
taken over the entire basis. From this point of view, the decomposition in Theorem
\ref{3.4} is completely transparent. 

\begin{corollary}\label{3.6}
$\chi(\Psi)(g)=\chi(\Psi)(g')$ whenever $g$ and $g'$ belong to the same conjugacy cluster.
\end{corollary}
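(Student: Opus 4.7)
The plan is to reduce the corollary to the explicit character formula of Theorem~\ref{3.5} and then exploit the compatibility between the double actions on $\u(n,\F_q)$ and $\u^*(n,\F_q)$.

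First, I would unpack the hypothesis. Two elements $g, g' \in U(n,\F_q)$ lie in the same conjugacy cluster iff the nilpotent matrices $g-I$ and $g'-I$ lie in the same adjoint cluster, i.e.\ there exist $h_1, h_2 \in U(n,\F_q)$ with
\[
g' - I \,=\, h_1 \cdot (g - I) \cdot h_2\,.
\]
By Theorem~\ref{3.5},
\[
\chi(\Psi)(g') \,=\, \frac{\deg\chi(\Psi)}{|\Psi|}\sum_{\lambda\in\Psi}\theta\bigl[\lambda(h_1\cdot(g-I)\cdot h_2)\bigr]\,,
\]
so it suffices to show that the sum $\sum_{\lambda\in\Psi}\theta[\lambda(h_1\cdot(g-I)\cdot h_2)]$ equals $\sum_{\lambda\in\Psi}\theta[\lambda(g-I)]$.

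Next I would translate the action on $\u(n,\F_q)$ into the dual double action on $\u^*(n,\F_q)$. Directly from the definitions in Section~\ref{notation},
\[
\lambda\bigl(h_1\cdot X\cdot h_2\bigr) \,=\, (h_2 * \lambda)(h_1\cdot X) \,=\, (h_2 * \lambda * h_1)(X)\,,
\]
for every $X \in \u(n,\F_q)$. Applying this with $X = g - I$ gives
\[
\sum_{\lambda\in\Psi}\theta\bigl[\lambda(h_1\cdot(g-I)\cdot h_2)\bigr]
\,=\, \sum_{\lambda\in\Psi}\theta\bigl[(h_2 * \lambda * h_1)(g-I)\bigr]\,.
\]

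The final step is to observe that the map $\lambda \mapsto h_2 * \lambda * h_1$ is a bijection of $\u^*(n,\F_q)$ which, because the coadjoint cluster $\Psi$ is by definition an orbit of the double action of $U(n,\F_q)$, permutes $\Psi$ among itself. Re-indexing the sum therefore brings us back to $\sum_{\mu\in\Psi}\theta[\mu(g-I)]$, which equals $\tfrac{|\Psi|}{\deg\chi(\Psi)}\chi(\Psi)(g)$ by Theorem~\ref{3.5}, and the corollary follows. The only thing one has to be attentive about is the correct bookkeeping of left vs.\ right in passing from $\u$ to $\u^*$; no deeper obstacle arises.
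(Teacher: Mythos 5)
Your proof is correct and is essentially the paper's own (implicit) argument: Corollary~\ref{3.6} is meant to follow immediately from the character formula of Theorem~\ref{3.5} together with the fact that $\Psi$ is an orbit of the double action, which is exactly what you carry out. The left/right bookkeeping $\lambda(h_1\cdot X\cdot h_2)=(h_2*\lambda*h_1)(X)$ is handled correctly under the paper's conventions, so there is nothing to add.
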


\begin{corollary}\label{3.7}
The interwining number between two cluster characters $\chi(\Psi)$ and $\chi(\Psi')$ is zero
unless $\Psi' =\Psi$, in which case
\[
\langle\, \chi(\Psi),\ \chi(\Psi)\, \rangle =
\frac{[\deg\chi(\Psi)]^2}{|\Psi|}.
\]
\end{corollary}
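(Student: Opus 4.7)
The plan is to read off both statements directly from the character formula in equation \eqref{3.0.2} together with the orthonormality of the Fourier basis. I would first record the two key inputs: the author has already noted that $\{v(\lambda)\,:\,\lambda\in\u^*(n,\F_q)\}$ is an orthonormal basis for $\C[U(n,\F_q)]$ with respect to the standard Hermitian inner product, and formula \eqref{3.0.2} expresses each cluster character as a scalar multiple of the sum of Fourier basis elements over the cluster.

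Next I would simply expand bilinearly. Using \eqref{3.0.2},
\[
\langle\chi(\Psi),\chi(\Psi')\rangle
=\frac{\deg\chi(\Psi)\cdot\deg\chi(\Psi')}{|\Psi|\cdot|\Psi'|}
\sum_{\lambda\in\Psi}\sum_{\lambda'\in\Psi'}
\langle v(\lambda),v(\lambda')\rangle,
\]
and by orthonormality only the diagonal terms $\lambda=\lambda'$ contribute. Thus the inner product is zero whenever $\Psi\cap\Psi'=\emptyset$, and otherwise it equals $\frac{\deg\chi(\Psi)\cdot\deg\chi(\Psi')}{|\Psi|\cdot|\Psi'|}\cdot|\Psi\cap\Psi'|$.

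The remaining point is that distinct coadjoint clusters are disjoint: since $\Psi(\lambda)$ is by definition the orbit of $\lambda$ under the double action of $U(n,\F_q)\times U(n,\F_q)$ on $\u^*(n,\F_q)$, the collection of coadjoint clusters partitions $\u^*(n,\F_q)$. Consequently $\Psi\cap\Psi'=\emptyset$ when $\Psi\neq\Psi'$, and $\Psi\cap\Psi'=\Psi$ when $\Psi=\Psi'$. Substituting $|\Psi|$ and $\deg\chi(\Psi)=\deg\chi(\Psi')$ in the latter case yields the stated value $[\deg\chi(\Psi)]^2/|\Psi|$.

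I do not expect a serious obstacle: the entire argument is a one‑line computation once one believes the orbit decomposition and invokes \eqref{3.0.2}. The only point that merits a sentence of comment is the disjointness of distinct clusters, which is immediate from their definition as orbits of a group action but should be stated for the reader.
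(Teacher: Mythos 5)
Your argument is correct and is precisely the one the paper intends: Corollary \ref{3.7} is stated as an immediate consequence of the character formula \eqref{3.0.2} together with the orthonormality of the Fourier basis and the fact that distinct coadjoint clusters, being orbits of the double action, are disjoint. Nothing is missing.
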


\begin{definition}
We say a complex-valued function on $U(n,\F_q)$ is a {\it cluster function}
if it is constant over each conjugacy cluster.
\end{definition}

In Section \ref{cluster} we will classify the adjoint clusters
in $\u(n,\K)$ and the coadjoint clusters in $\u^*(n,\K)$,
and see that the two kinds of clusters are equal in number when $\K=\F_q$. As a consequence, we have
the following intrinsic characterization of the cluster characters:

\begin{corollary}\label{3.8}
The collection of all cluster characters form an orthogonal basis for the 
${\mathbb C}$-linear space of cluster functions on $U(n,\F_q)$. 
\end{corollary}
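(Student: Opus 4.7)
The plan is to combine the orthogonality already established with a dimension count, using the forthcoming classification of clusters as a black box.

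First I would observe that each cluster character $\chi(\Psi)$ is a cluster function. This is precisely Corollary \ref{3.6}: $\chi(\Psi)(g)=\chi(\Psi)(g')$ whenever $g$ and $g'$ lie in the same conjugacy cluster. Hence the set of cluster characters sits inside the $\C$-linear space of cluster functions.

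Next I would invoke Corollary \ref{3.7} to conclude that distinct cluster characters are mutually orthogonal with respect to the standard inner product on $\C[U(n,\F_q)]$, and in particular $\C$-linearly independent. So the cluster characters form an orthogonal, linearly independent family of cluster functions; it only remains to check that this family spans, equivalently that its cardinality equals $\dim_{\C}\{\text{cluster functions}\}$.

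The final step is a matching count. On the one hand, by Corollary \ref{3.3} the cluster characters are parametrized by the coadjoint clusters in $\u^*(n,\F_q)$, so their number equals the number of coadjoint clusters. On the other hand, the dimension of the space of cluster functions equals the number of conjugacy clusters in $U(n,\F_q)$, which, via the bijection $X\mapsto I+X$ noted in Section \ref{notation}, equals the number of adjoint clusters in $\u(n,\F_q)$. The conclusion then follows from the classification promised in Section \ref{cluster}, which asserts that the adjoint clusters in $\u(n,\F_q)$ and the coadjoint clusters in $\u^*(n,\F_q)$ are equinumerous. This equality in number is the main obstacle, and it is the only nontrivial input beyond what has already been proved; once it is granted, the orthogonal linearly independent family must be an orthogonal basis for dimension reasons.
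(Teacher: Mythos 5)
Your proposal is correct and is essentially the argument the paper intends: the corollary is stated without a separate proof precisely because it follows from Corollary \ref{3.6} (cluster characters are cluster functions), Corollary \ref{3.7} (mutual orthogonality, hence linear independence and distinctness for distinct clusters), and the equinumerosity of adjoint and coadjoint clusters established by the template classifications in Theorems \ref{4.1} and \ref{4.2}. Your dimension count matching the number of conjugacy clusters to the number of coadjoint clusters is exactly the intended closing step.
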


\section{Adjoint clusters and coadjoint clusters}\label{cluster}

In the first two parts of this section we classify the adjoint clusters
in $\u(n,\K)$ and the coadjoint clusters in $\u^*(n,\K)$,
where $\K$ is an arbitrary field. We then study various important aspects of the coadjoint clusters:
Section \ref{sec4.3} defines two numerical
invariants of coadjoint clusters, which correspond (when $\K=\F_q$) to the 
degree and the self-interwining number of cluster characters (see Section \ref{degree}).
Section \ref{sec4.4} introduces a {\it primary decomposition} for coadjoint clusters, 
which will be used (when $\K=\F_q$)
to derive a corresponding primary decomposition for cluster characters 
(see Section \ref{primary}).

\subsection{Classification of adjoint clusters}\label{sec4.1}

\begin{notation}
For $1\le i<j\le n$ let $e_{ij}$ be the matrix whose only non-zero entry is the $(i,j)$-entry, 
with value $1$. The collection
\[
\left\{\,e_{ij}: \quad 1\le i<j\le n\,\right\}
\]
form a $\K$-linear basis for $\u(n,\K)$. 
We denote the dual basis in $\u^*(n,\K)$ by
\[
\left\{\,\e_{ij}: \quad 1\le i<j\le n\,\right\}
\]
so that
\[
\e_{ij}(X)=x_{ij}
\]
where $X\in\u(n,\K)$ and $x_{ij}$ is the $(i,j)$-entry of $X$. 
\end{notation}

The left and right actions of $U(n,\K)$ on $\u(n,\K)$ are special
kinds of row and column operations, respectively. Each element of $U(n,\K)$ can
be written as a product
\[
\prod_{1\le i<j\le n}(I+a_{ij}\cdot e_{ij})
\]
where $a_{ij}\in\K$ and the factors are arranged in a suitable order. 
Left action by $I+a_{ij}\cdot e_{ij}$ corresponds to the row
operation which adds $a_{ij}$ times the $j$-th row to the $i$-th row.
A general left action is a sequence of such row operations.
Similarly, right action by $I+a_{ij}\cdot e_{ij}$ corresponds to the column
operation which adds $a_{ij}$ times the $i$-th column to the $j$-th column, and
a general right action is a sequence of such column operations.

\begin{definition}
We say $X\in\u(n,\K)$ is an {\it adjoint template} if it has
at most one non-zero entry in each row and each column.
Let $\T(n,\K)$ denote the collection of adjoint templates in $\u(n,\K)$.  
\end{definition}

\begin{theorem}\label{4.1}
Each adjoint cluster in $\u(n,\K)$ contains one and only one element
in $\T(n,\K)$. The adjoint clusters are therefore indexed by the adjoint templates.
\end{theorem}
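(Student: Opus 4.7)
My plan is to establish both existence (each cluster contains a template) and uniqueness (no cluster contains two templates) by induction on $n$, using explicit cluster invariants.

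For existence, given a non-zero $X \in \u(n,\K)$, I would identify the pivot $(i_0, j_0)$ by taking $j_0$ minimal so that column $j_0$ has a non-zero entry, and then $i_0$ maximal with $X_{i_0, j_0} \ne 0$; strict upper triangularity forces $i_0 < j_0$. I would first use left actions (adding lower rows to upper ones) to kill every $X_{k, j_0}$ for $k < i_0$ by subtracting the appropriate multiple of row $i_0$ from row $k$. Since columns $1, \ldots, j_0 - 1$ vanish and row $i_0$'s non-pivot entries all sit in columns $> j_0$, this does not disturb the already-vanishing columns. Then I would use right actions to kill every $X_{i_0, l}$ with $l > j_0$; at this stage column $j_0$ carries only the pivot, so those column operations only affect row $i_0$. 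The result decouples: row $i_0$ and column $j_0$ contain only the pivot, and the rest of the matrix, after reindexing, lies in $\u(n-1, \K)$. The inductive hypothesis applied to this submatrix, via the natural embedding of $U(n-1, \K)$ into $U(n, \K)$, completes the existence step.

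For uniqueness, I would rely on two layers of cluster invariants. The rank function $r_{ij}(X) := \rank X[\{i, \ldots, n\}, \{1, \ldots, j\}]$ is cluster-invariant, because left action produces only row operations within the top index set and right action produces only column operations within the bottom index set. For a template $T$, $r_{ij}(T)$ counts the entries of $T$ lying in the SW corner $\{(i', j') : i' \ge i,\ j' \le j\}$, so $\supp T$ can be recovered from the numbers $r_{ij}$ by inclusion-exclusion; any two templates in the same cluster must therefore share a support. Moreover, the entry $X_{i_0, j_0}$ at the first pivot (read off combinatorially from the rank data) is itself a cluster invariant: the sum $(gX)_{i_0, j_0} = \sum_k g_{i_0, k} X_{k, j_0}$ collapses to $X_{i_0, j_0}$, since $g_{i_0, k} = 0$ for $k < i_0$ by upper triangularity of $g$ and $X_{k, j_0} = 0$ for $k > i_0$ by maximality of $i_0$; the right action is controlled analogously using the minimality of $j_0$.

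To extend these facts to all pivots, I would induct on $n$. Given templates $T, T' \in \T(n, \K)$ in the same cluster, the preceding steps show they agree in support and in the first pivot value. Passing to the complementary $(n-1) \times (n-1)$ submatrices $T'', T''' \in \u(n-1, \K)$ obtained by deleting row $i_0$ and column $j_0$, the inductive hypothesis will force $T'' = T'''$ and therefore $T = T'$. The hardest part will be precisely this recursion step: given a cluster equivalence $T' = gTg'$, one must produce $h, h' \in U(n-1, \K)$ realizing $T''' = h T'' h'$. Generic $g, g'$ need not preserve the cleaned structure at $(i_0, j_0)$, so the strategy will be to first apply the cleaning operations from the existence proof to $gTg'$; this absorbs the components of $g$ and $g'$ that would otherwise disturb row $i_0$ or column $j_0$ into elementary operations acting trivially on the submatrix, after which the residual action factors through the embedded $U(n-1, \K)$ subgroups and the induction closes.
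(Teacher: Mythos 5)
Your existence argument (peel off the first pivot, clean its row and column, and recurse on the complementary $(n-1)\times(n-1)$ block) is sound and is a legitimate reorganization of the paper's column-by-column sweep; your rank invariants $r_{ij}$ are exactly the paper's $\rank(i,j,X)$, and the deduction that two templates in one cluster share a support is complete. The problem is the second half of uniqueness. You rigorously recover only the \emph{first} pivot value --- and your computation works there only because every column to the left of $j_0$ vanishes identically, which kills the $g'$-contribution; for a later pivot $(i,j)$ the columns to its left need not vanish, so the same direct expansion of $(gTg')_{ij}$ does not collapse. You then defer all remaining pivots to an induction whose crux --- producing $h,h'\in U(n-1,\K)$ with $T'''=hT''h'$ from $T'=gTg'$ --- you explicitly flag as ``the hardest part'' and only gesture at. The claim that the disturbing components of $g$ and $g'$ can be ``absorbed'' into operations acting trivially on the submatrix is exactly what needs proof: the generators $I+ae_{i_0 j}$, $I+ae_{k i_0}$ (and their column analogues) do not lie in the embedded $U(n-1,\K)\times U(n-1,\K)$, and since $g$ is an arbitrary interleaved product of good and bad generators, one must verify that re-cleaning after each bad generator leaves the complementary block unchanged and commutes appropriately with the good ones. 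As written, the descent is a plan, not an argument, so the proof is incomplete.

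The gap is also unnecessary. Once the supports agree, the paper finishes in one line by the two-sided trick: rewrite $T'=gT(g')^{-1}$ as $g\cdot T=T'\cdot g'$ and compare the $(i,j)$-entry of both sides for each $(i,j)$ in the common support. On the left, $(g\cdot T)_{ij}=\sum_k g_{ik}T_{kj}=T_{ij}$ because column $j$ of the template $T$ has its unique non-zero entry in row $i$ and $g$ is upper unipotent; on the right, $(T'\cdot g')_{ij}=T'_{ij}$ because row $i$ of $T'$ has its unique non-zero entry in column $j$. Hence $T_{ij}=T'_{ij}$ at every pivot simultaneously, with no induction and no descent lemma. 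Your first-pivot computation is essentially the left half of this identity; you are missing the step of moving $g'$ to the other side so that the row-template property of $T'$ (rather than the vanishing of the columns left of $j_0$) controls the right-hand contribution. I recommend replacing your inductive value-recovery by this direct comparison; the rest of your write-up then stands.
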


\begin{proof}
To reduce an arbitrary element in $\u(n,\K)$ to some element in
$\T(n,\K)$, we proceed column by column, from left to right. Suppose we have
successfully reduced the first $k$ columns, so that no two non-zero entries in 
the first $k$ columns belong to the same row nor the same column. We now look at
the $(k+1)$-th column. By a suitable column operation we can eliminate
from this column each non-zero entry which shares the same row as another
non-zero entry to the left. These column operations are guaranteed not to interfere
with each other since none of the first $k$ columns contains more than one non-zero
entry. From the remaining non-zero entries in the $(k+1)$-th column we can
eliminate all but the bottom one (if there is one at all) by a sequence of row
operations. We are thus done with this column. The process may continue for the
remaining columns, until we end up with an element in $\T(n,\K)$.

We next show that no two adjoint templates belong to the same adjoint cluster. 
For an element $X$ in $\u(n,\K)$, let $\rank(i,j,X)$ be the rank of
the matrix
\[
\sum_{i\le k<l\le j}x_{kl}\cdot e_{kl}\,.
\]
If $X$ and $X'$ belong to the same adjoint cluster, then $
\rank(i,j,X)=\rank(i,j,X')$ for
any $i$ and $j$. This implies that if moreover $X$ and $X'$ are both in $\T(n,\K)$,
then the non-zero entries of $X$ and $X'$ must have exactly the same positions. To
show that $X=X'$, take $g$ and $g'$ in $U(n,\K)$
such that $g\cdot X=X'\cdot g'$. The non-zero entries in $X$ and $X'$ have the same
values as the entries at corresponding positions in $g\cdot X$ and $X'\cdot g'$,
respectively, and this means $X=X'$. \end{proof}

\subsection{Classification of coadjoint clusters}\label{sec4.2}

\begin{notation}
It is convenient to introduce a
bijective $\K$-linear map from $\u^*(n,\K)$ to $\u(n,\K)$, given by
\begin{eqnarray*}
\lambda & \mapsto & e(\lambda)=\sum_{1\le i<j\le n}\lambda(e_{ij})\cdot e_{ij}\,.
\end{eqnarray*}
In particular, $e(\e_{ij})=e_{ij}$.
The inverse map is 
\begin{eqnarray*}
X & \mapsto & \e(X)=\sum_{1\le i<j\le n}x_{ij}\cdot \e_{ij}\,.
\end{eqnarray*}
In particular, $\e(e_{ij})=\e_{ij}$.
\end{notation}

The left and right actions 
of $U(n,\K)$ on $\u^*(n,\K)$ can be described in terms
of $\{\, e_{ij}\,\}$ and $\{\, \e_{ij}\,\}$:
\[
(I+a_{ij}\cdot e_{ij})*\e_{kl}=
\begin{cases}
\e_{kl}+a_{ij}\cdot \e_{ki}, &\text{if $l=j$ and $k<i$;} \\
\e_{kl}, &\text{otherwise.}
\end{cases}
\]
\[
\e_{kl}*(I+a_{ij}\cdot e_{ij})=
\begin{cases}
\e_{kl}+a_{ij}\cdot \e_{jl}, &\text{if $k=i$ and $l>j$;} \\
\e_{kl}, & \text{otherwise.}
\end{cases}
\]
Left action by $I+a_{ij}\cdot e_{ij}$ on
$\lambda\in\u^*(n,\K)$ corresponds to a {\it restricted column operation}
on $e(\lambda)$, which adds
to each entry to the right of the diagonal in the $i$-th column $a_{ij}$ times the entry
to its right in the $j$-th column.
Similarly, right action by $I+a_{ij}\cdot e_{ij}$ on
$\lambda$ corresponds to a {\it restricted row operation}
on $e(\lambda)$, which adds  
to each entry above the diagonal in the $j$-th row $a_{ij}$ times the entry 
above it in the $i$-th row.

\begin{definition}
For $\lambda\in\u^*(n,\K)$ the {\it support} of $\lambda$, denoted by $\supp(\lambda)$, is the
set of positions $(i,j)$ such that $\lambda(e_{ij})\not=0$.
In other words, $\lambda$ is supported at the positions of the non-zero entries of $e(\lambda)$.
\end{definition}

\begin{definition}
We say $\lambda\in\u^*(n,\K)$ is a {\it coadjoint template} if $\supp(\lambda)$
contains at most one position in each row and each column. 
Let $\T^*(n,\K)$ denote the collection of coadjoint templates in $\u^*(n,\K)$.
\end{definition}

\begin{remark}
By definition, $\lambda$ is in $\T^*(n,\K)$ if and only if $e(\lambda)$ is in $\T(n,\K)$.
\end{remark}

\begin{theorem}\label{4.2}
Each coadjoint cluster in $\u^*(n,\K)$ contains one and only one element
in $\T^*(n,\K)$. The coadjoint clusters are therefore indexed by the coadjoint templates. 
\end{theorem}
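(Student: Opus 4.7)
The plan is to follow the structure of Theorem \ref{4.1}. As noted in the excerpt, the coadjoint left and right actions correspond to \emph{restricted} column and row operations on $e(\lambda)$: a left action by $I+a\cdot e_{pq}$ modifies column $p$ of $e(\lambda)$ only in rows $1,\ldots,p-1$, by adding $a$ times the corresponding portion of column $q$; symmetrically a right action by $I+a\cdot e_{pq}$ modifies row $q$ only in columns $q+1,\ldots,n$, by adding $a$ times the corresponding portion of row $p$. Both the reduction algorithm and the invariant analysis of Theorem \ref{4.1} have to be adapted for these more constrained operations.

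For existence I would adapt the column-by-column reduction from Theorem \ref{4.1}. One processes $e(\lambda)$ in a carefully chosen order --- for instance columns from left to right, and within each column from top to bottom --- using restricted row operations to eliminate all but one non-zero entry per column, and restricted column operations to resolve row conflicts. The delicate point is that a restricted row operation affects the entire tail of a row, so the order must be arranged so previously normalized columns are not disturbed; an inductive argument on the number of completed columns should finish the job.

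For uniqueness I would introduce invariants analogous to $\rank(i,j,X)$. For each $(i,j)$ with $1\le i<j\le n$, let $\rho(i,j,\lambda)$ denote the rank of the submatrix of $e(\lambda)$ at rows $1,\ldots,i$ and columns $j,\ldots,n$. The key lemma is that $\rho(i,j,\lambda)$ is a cluster invariant. Indeed, a restricted column operation from $I+a\cdot e_{pq}$ modifies column $p$ in rows $<p$: if $p<j$ it is invisible to the submatrix; if $p\ge j$ then $p>i$ because $j>i$, so the affected rows $1,\ldots,p-1$ contain every row of the submatrix and the operation descends to a standard column operation on it, preserving rank. Restricted row operations are handled symmetrically. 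For a template $\lambda_0$ with support $S$ one computes $\rho(i,j,\lambda_0)=|\{(k,l)\in S:k\le i,\ l\ge j\}|$, so the support $S$ is recovered from the $\rho$ values by inclusion--exclusion, and any two templates in the same cluster share the same support.

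To pin down the values at template positions I would augment the rank invariants with determinants of carefully chosen square submatrices of $e(\lambda)$, namely minors whose rows and columns are indexed by the template positions lying ``to the northeast'' of a fixed template position. The same case analysis shows each such determinant is a double-action invariant; on the template it factors as (up to sign) a product of values at northeast positions, and ratios of these products recover the individual template values. The main obstacle is precisely this last step: articulating the correct family of determinantal invariants and verifying their invariance requires delicate bookkeeping beyond the clean rank argument, even though the underlying case analysis is the same as for $\rho$.
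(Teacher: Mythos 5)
Your overall strategy (reduce to templates by restricted operations, then separate templates by northeast‑corner rank invariants) is the paper's, and the rank part is right: your $\rho(i,j,\lambda)$ is literally the paper's $\rank(i,j,\lambda)$, and your case analysis for its invariance is the intended one, so two templates in one cluster do have the same support. The smaller of the two problems is the direction of your reduction. A restricted column operation adds (part of) a column to a column on its \emph{left}, and a restricted row operation adds (part of) a row to a row \emph{below} it. Hence in a left‑to‑right pass you cannot clear an entry of the current column that conflicts with an already‑normalized column to its left: the only column operations targeting the current column use unprocessed columns farther right as sources. The correct adaptation runs from \emph{right to left}: in the current column, first eliminate every entry sharing a row with a surviving entry to its right (using those entries as sources for restricted column operations), then keep only the \emph{top} remaining entry, eliminating the ones below it by restricted row operations; with this order the finished columns are never disturbed.

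The substantive gap is the step you yourself flag. Your determinantal scheme is not carried out, and ``the same case analysis'' does not give invariance of those minors: if the square submatrix uses only the rows and columns of the template positions northeast of $(i_0,j_0)$, a restricted column operation whose target column is selected but whose source column is not is \emph{not} an elementary operation on that submatrix, so its effect on the determinant is uncontrolled. (Individual support entries really are not double‑action invariants: for $\tau=\e_{14}+\e_{23}$ in $\u^*(4,\K)$, the coefficient of $\e_{23}$ in $(I+ae_{34})*\tau*(I+be_{12})$ is $1+ab$.) The paper replaces all of this with a two‑line argument you should adopt. Given templates $\lambda,\lambda'$ in the same cluster --- hence with the same support --- choose $g,g'$ with $g*\lambda=\lambda'*g'$. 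For $(k,l)\in\supp(\lambda)$,
\[
(g*\lambda)(e_{kl})=\lambda(e_{kl}\cdot g)=\lambda(e_{kl})+\sum_{m>l}g_{lm}\,\lambda(e_{km})=\lambda(e_{kl}),
\]
since row $k$ of the template $\lambda$ supports no position other than $(k,l)$; symmetrically $(\lambda'*g')(e_{kl})=\lambda'(e_{kl})$ because column $l$ of $\lambda'$ supports no other position. Evaluating both sides of $g*\lambda=\lambda'*g'$ on the common support gives $\lambda=\lambda'$ directly, with no determinants needed.
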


\begin{proof}
For $\lambda\in\u^*(n,\K)$ we apply restricted row and column
operations on $e(\lambda)$ to reduce it to some element in $\T(n,\K)$. This can be done if we proceed
column by column, from right to left, in the manner described in the proof of Theorem \ref{4.1},
but replace ``left'' by ``right'' and ``bottom'' by ``top''
in the recipe. 

We now show that no two coadjoint templates belong to the same coadjoint cluster.
For an element $\lambda$ in $\u^*(n,\K)$, let $\rank(i,j,\lambda)$ be the rank of
the matrix
\[
\sum_{k\le i<j\le l}\lambda(e_{kl})\cdot e_{kl}
\]
If $\lambda$ and $\lambda'$ belong to the same coadjoint cluster, 
then $\rank(i,j,\lambda)=\rank(i,j,\lambda')$ for
any $i$ and $j$. This implies that if moreover $\lambda$ and $\lambda'$ are both in $\T^*(n,\K)$,
then they must have exactly the same support. 
To show that $\lambda=\lambda'$, take $g$ and $g'$ in $U(n,\K)$
such that $g*\lambda =\lambda'* g'$. The non-zero entries in $e(\lambda)$ and $e(\lambda')$ 
have the same values as the entries at corresponding positions in $e(g*\lambda)$ and 
$e(\lambda'*g')$, respectively, which means $e(\lambda)=e(\lambda')$, and $\lambda=\lambda'$. \end{proof}

\subsection{Two numerical invariants of coadjoint clusters}\label{sec4.3}
 
To each element of $\u^*(n,\K)$
we will now associate two numerical values that remain invariant under the double action of $U(n,\K)$. 

\begin{notation}
For $\lambda\in\u^*(n,\K)$, Let $L(\lambda)$ and $R(\lambda)$ be the respective
orbits of $\lambda$ under the left and right actions of $U(n,\K)$. Define
\begin{eqnarray*}
\L(\lambda) & = &  L(\lambda) - \lambda \,, \\
\R(\lambda) & = &  R(\lambda) - \lambda \,.
\end{eqnarray*}
It is easy to see that $\L(\lambda)$ and $\R(\lambda)$ are $\K$-linear subspaces of 
$\u^*(n,\K)$. By definition, $\lambda'$ is an element of $\L(\lambda)$ if we
can find $Y\in\u(n,\K)$ such that $\lambda'(X)=\lambda(XY)$ for all
$X\in\u(n,\K)$. Similarly, $\lambda'$ is an element of $\R(\lambda)$ if we
can find $Y\in\u(n,\K)$ such that $\lambda'(X)=\lambda(YX)$ for all
$X\in\u(n,\K)$.
\end{notation}

\begin{proposition}\label{4.7}
Suppose $\tau\in\T^*(n,\K)$. Then $\lambda\in\L(\tau)$ if and only if every position in $\supp(\lambda)$ lies to the
left of some position in $\supp(\tau)$. Similarly, 
$\lambda\in\R(\tau)$ if and only if every position in $\supp(\lambda)$ lies below some position in $\supp(\tau)$.
Consequently, $\dim\L(\tau)=\dim\R(\tau)$.
\end{proposition}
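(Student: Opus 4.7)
The approach is to reinterpret $\L(\tau)$ and $\R(\tau)$ as explicit images of linear maps on $\u(n,\K)$, and then to compute these images directly using the template structure of $\tau$. Specifically, since every $g \in U(n,\K)$ can be written uniquely as $g = I + Y$ with $Y \in \u(n,\K)$, the identity
\[
(g*\lambda - \lambda)(X) = \lambda(X(g-I)) = \lambda(XY)
\]
shows that $\L(\tau) = \{\,X \mapsto \tau(XY) : Y \in \u(n,\K)\,\}$, and symmetrically $\R(\tau) = \{\,X \mapsto \tau(YX) : Y \in \u(n,\K)\,\}$. This already makes transparent the fact that $\L(\tau)$ and $\R(\tau)$ are $\K$-linear subspaces of $\u^*(n,\K)$.

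Next I would expand $\tau(XY)$ using the template form $\tau = \sum_{(p,q)\in\supp(\tau)} \tau(e_{pq})\,\e_{pq}$. Because $X$ and $Y$ are strictly upper triangular, $(XY)_{pq} = \sum_{p<l<q} X_{pl}Y_{lq}$, so
\[
\tau(XY) = \sum_{(p,q)\in\supp(\tau)}\ \sum_{p<l<q} \tau(e_{pq})\,Y_{lq}\,\e_{pl}(X).
\]
Therefore $\L(\tau)$ is contained in the span of those $\e_{pl}$ with $p<l<q$ for some $(p,q) \in \supp(\tau)$, i.e., those positions $(p,l)$ lying in the same row as and strictly to the left of some position of $\supp(\tau)$. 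The reverse containment comes from observing that the coefficients appearing in the expansion can be prescribed independently: since $\tau$ is a coadjoint template, the columns $q$ appearing in $\supp(\tau)$ are distinct, so the entries $Y_{lq}$ that show up for different $(p,q) \in \supp(\tau)$ are genuinely different free entries of $Y$; and $\tau(e_{pq})$ is nonzero by definition of support. Hence the displayed spanning set is in fact a basis of $\L(\tau)$.

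The analogous computation $\tau(YX) = \sum_{(p,q)\in\supp(\tau)} \sum_{p<l<q} \tau(e_{pq})\,Y_{pl}\,\e_{lq}(X)$, combined with the same template argument (now using that the rows $p$ in $\supp(\tau)$ are distinct), yields the corresponding basis $\{\,\e_{lq} : (p,q) \in \supp(\tau),\ p<l<q\,\}$ for $\R(\tau)$. These are exactly the positions lying below some position of $\supp(\tau)$, giving the second characterization. For the dimension statement, both bases have cardinality $\sum_{(p,q)\in\supp(\tau)} (q-p-1)$, so $\dim\L(\tau) = \dim\R(\tau)$.

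The only genuinely non-formal step is the independence of coefficients, and the main obstacle is simply keeping the indexing straight; this is where the template hypothesis is used in an essential way, since without it, a single entry $Y_{lq}$ could contribute to several different $\e_{pl}$ (or the contributions could cancel), and one could no longer conclude that every element of the spanning set actually lies in $\L(\tau)$.
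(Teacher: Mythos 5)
Your proof is correct. The paper states Proposition \ref{4.7} without proof, but its Notation paragraph in Section \ref{sec4.3} already reformulates membership in $\L(\tau)$ and $\R(\tau)$ exactly as you do (existence of $Y$ with $\lambda'(X)=\tau(XY)$, resp.\ $\tau(YX)$), and your expansion of $\tau(XY)$ together with the observation that the template condition makes the relevant entries $Y_{lq}$ independent is precisely the computation the paper leaves implicit; the dimension count $\sum_{(p,q)\in\supp(\tau)}(q-p-1)$ for both spaces is also consistent with the paper's description of $d(\tau)$ as the total distance from $\supp(\tau)$ to the second diagonal.
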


\begin{proposition}\label{4.6}
For $\lambda\in\u^*(n,\K)$, the dimensions of $\L(\lambda)$, $\R(\lambda)$ and $\L(\lambda)\cap\R(\lambda)$  
only depend
on the coadjoint cluster $\Psi$ to which $\lambda$ belongs. Moreover $\dim\L(\lambda)=\dim\R(\lambda)$.
\end{proposition}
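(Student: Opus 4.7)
The plan is to track how $\L(\lambda)$ and $\R(\lambda)$ transform under each of the left and right actions on $\lambda$ individually, then to combine these transformation laws with Proposition \ref{4.7}.

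First I would establish that $\L(g*\lambda) = \L(\lambda)$ for every $g \in U(n,\K)$. Since the left orbit satisfies $L(g*\lambda) = L(\lambda)$, we have $\L(g*\lambda) = L(\lambda) - g*\lambda$. But $L(\lambda) - \lambda$ and $L(\lambda) - g*\lambda$ are two $\K$-linear subspaces arising as the ``direction'' of the same affine set $L(\lambda)$, and by the uniqueness of the direction of an affine subspace they must coincide. Analogously $\R(\lambda*g) = \R(\lambda)$. Next, since the left and right actions on $\u^*(n,\K)$ commute, $R(g*\lambda) = g*R(\lambda)$, and linearity of the left action gives $\R(g*\lambda) = g*\R(\lambda)$; likewise $\L(\lambda*g) = \L(\lambda)*g$. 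Because each of these actions is an invertible $\K$-linear map on $\u^*(n,\K)$, it preserves dimension, so $\dim \L(\lambda)$ and $\dim \R(\lambda)$ depend only on the coadjoint cluster of $\lambda$.

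For the intersection I would first observe that $\L(\lambda)$ is preserved \emph{as a set} by the left action of $U(n,\K)$ on $\u^*(n,\K)$. This is transparent from the description $\L(\lambda) = \{\,X \mapsto \lambda(XY) : Y \in \u(n,\K)\,\}$: left action by $h$ sends the functional attached to $Y$ to the one attached to $hY$, and $Y \mapsto hY$ is a bijection on $\u(n,\K)$. By symmetry, $\R(\lambda)$ is preserved as a set by the right action. Under $\lambda \mapsto g*\lambda$ we therefore get
\[
\L(g*\lambda)\cap\R(g*\lambda)\;=\;\L(\lambda)\cap g*\R(\lambda)\;=\;g*\bigl(\L(\lambda)\cap\R(\lambda)\bigr),
\]
which has the same dimension as $\L(\lambda)\cap\R(\lambda)$; the right-action case is symmetric. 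Hence $\dim(\L(\lambda)\cap\R(\lambda))$ is a cluster invariant as well. The equality $\dim\L(\lambda) = \dim\R(\lambda)$ then follows by choosing the unique coadjoint template $\tau \in \T^*(n,\K)$ in the cluster of $\lambda$ (Theorem \ref{4.2}) and invoking Proposition \ref{4.7}, which yields $\dim\L(\tau) = \dim\R(\tau)$.

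The only genuinely subtle step, I expect, is the uniqueness-of-direction argument that collapses $L(\lambda)-\lambda$ and $L(\lambda)-g*\lambda$ into the same subspace; after that, the transformation laws and the intersection computation are routine linear-algebra bookkeeping, and the final dimension equality is simply a transfer from the template provided by the previous results.
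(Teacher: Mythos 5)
Your proposal is correct and follows essentially the same route as the paper: establish the transformation laws $\L(g*\lambda)=g*\L(\lambda)=\L(\lambda)$, $\L(\lambda*g)=\L(\lambda)*g$, and their mirror images for $\R$, deduce that the three dimensions are invariant under the double action, and then transfer the equality $\dim\L=\dim\R$ from the coadjoint template via Proposition \ref{4.7}. The paper leaves these relations as ``easy to verify''; your affine-direction argument and the explicit description $\L(\lambda)=\{\,X\mapsto\lambda(XY): Y\in\u(n,\K)\,\}$ are valid ways of verifying them.
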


\begin{proof}
It is easy to verify the following relations:
\[
\L(\lambda*g)=\L(\lambda)*g\,, \qquad \L(g*\lambda)=g*\L(\lambda)=\L(\lambda)\,, 
\]
\[
\R(g*\lambda)=g*\R(\lambda)\,, \qquad \R(\lambda*g)=\R(\lambda)*g=\R(\lambda)\,.
\]  
As a consequence, 
the dimensions of $\L(\lambda)$, $\R(\lambda)$ and $\L(\lambda)\cap\R(\lambda)$
are invariant under the double action of $U(n,\K)$, so
they are characteristics of the coadjoint cluster $\Psi$ to which $\lambda$ belongs. Let $\tau$ be the 
coadjoint template in $\Psi$. By Proposition \ref{4.7},  $\dim\L(\lambda)=\dim\L(\tau)=\dim\R(\tau)=\dim\R(\lambda)$.
\end{proof}
 
\begin{definition} 
We define the {\it dimension index} $d(\lambda)$ to be the dimension of $\L(\lambda)$ and $\R(\lambda)$, and define
the {\it interwining index} $i(\lambda)$ to be the dimension of the intersection $\L(\lambda)\cap\R(\lambda)$.
By Proposition \ref{4.6} we can write $d(\Psi)=d(\lambda)$ 
and $i(\Psi)=i(\lambda)$.
\end{definition}

It is rather easy to compute $d(\Psi)$ and $i(\Psi)$ from the coadjoint template in $\Psi$: 
Proposition \ref{4.7} says that if $\tau\in\T^*(n,\K)$ then
$d(\tau)$ and $i(\tau)$ can be determined from $\supp(\tau)$ by simply counting various kinds of positions above the diagonal.
To compute $d(\tau)$ we just count the number of positions $(i,j)$ such that $(i',j)\in\supp(\tau)$ for some $i'<i$,
or equivalently, we can count the number of positions $(i,j)$ such that $(i,j')\in\supp(\tau)$ for some $j'>j$. 
In other words,
$d(\tau)$ is equal to the total distance from its support
to the second diagonal (the diagonal right above the main diagonal). In
particular, $d(\tau)$ is zero if $\supp(\tau)$ is contained in the second diagonal. 
To compute $i(\tau)$ we count the number of positions $(i,j)$ such that there is simultaneously some $i'<i$
with $(i',j)\in\supp(\tau)$ and some $j'>j$ with $(i,j')\in\supp(\tau)$.
A more visual way to obtain $i(\tau)$
is to consider the three positions $(i',j)$, $(i,j)$ and $(i,j')$ as forming a L-shaped hook,
as in the following diagram:
\[
\begin{array}{ccc}
(i',j) & {} & {} \\
\uparrow & {} & {} \\
(i,j) & \rightarrow & (i,j')
\end{array}
\]
Here the corner position $(i,j)$ is above the diagonal, while the two arrows each points to a position in
$\supp(\tau)$. Since no two such hooks can share the same corner position, 
to obtain $i(\tau)$ we only need to count the number of such corners.

\subsection{Primary decomposition of coadjoint clusters}\label{sec4.4}

For any $\lambda_1$ and $\lambda_2$ in $\u^*(n,\K)$ we clearly have
\[ 
\Psi(\lambda_1+\lambda_2)\,\subset\,\Psi(\lambda_1)+\Psi(\lambda_2)\,=\,
\{\, \lambda_1'+\lambda_2':\quad (\lambda_1', \lambda_2')\in \Psi(\lambda_1)\times\Psi(\lambda_2) \,\}.
\]
We will see that if two coadjoint templates $\tau_1$ and $\tau_2$ in $\T^*(n,\K)$ are supported in different 
rows and different columns then $\Psi(\tau_1+\tau_2)=\Psi(\tau_1)+\Psi(\tau_2)$. 

\begin{notation}
For $1\le i<j\le n$ we let $\E_{ij}$ denote the coadjoint cluster in $\u^*(n,\K)$ that contains $\e_{ij}$.
\end{notation}

\begin{definition}
We say a coadjoint cluster in $\u^*(n,\K)$ is {\it primary} if it is the 
multiple of some $\E_{ij}$ by a non-zero element in $\K$.
\end{definition}

Theorem \ref{4.4} will show that any coadjoint cluster is equal to a sum 
of primary coadjoint clusters.

\begin{lemma}\label{4.3}
For $\lambda\in\u^*(n,\K)$, if $\supp(\lambda)$ does not contain
any position in the $i$-th row nor the $j$-th column, then the coadjoint 
cluster $\Psi(\lambda+a\cdot \e_{ij})$ is equal to $\Psi(\lambda)+a\cdot\E_{ij}$. 
\end{lemma}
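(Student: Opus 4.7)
The plan is to prove both inclusions separately. The forward direction
\(
\Psi(\lambda + a\e_{ij}) \subset \Psi(\lambda) + a\E_{ij}
\)
is immediate from linearity of the double action: for any \(g, g' \in U(n,\K)\),
\[
g*(\lambda + a\e_{ij})*g' \;=\; g*\lambda*g' \;+\; a\cdot(g*\e_{ij}*g'),
\]
and the two summands lie respectively in \(\Psi(\lambda)\) and \(a\E_{ij}\).

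For the reverse inclusion the idea is to decouple the double action on the ``\(\lambda\)-part'' from the action on the ``\(\e_{ij}\)-part''. Concretely, I would like to show that for any \((\nu_1,\nu_2) \in \Psi(\lambda) \times \E_{ij}\) there is a single pair \((g,g')\) with \(g*\lambda*g' = \nu_1\) and \(g*\e_{ij}*g' = \nu_2\); this then gives \(g*(\lambda + a\e_{ij})*g' = \nu_1 + a\nu_2\). To construct such a pair I introduce two factorizations of \(U(n,\K)\). On the left-action side, set
\[
U_j = \bigl\{\,I + \textstyle\sum_{k<j} c_k\, e_{kj} : c_k \in \K\,\bigr\},\qquad
H = \bigl\{\,g \in U(n,\K) : g_{kj} = 0 \text{ for all } k<j\,\bigr\};
\]
one checks that these are subgroups with \(H \cap U_j = \{I\}\) and \(|H|\cdot|U_j| = |U(n,\K)|\), so \(U(n,\K) = H\cdot U_j\). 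Define \(U^i\) and \(H'\) analogously on the right by interchanging the roles of rows and columns.

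The support hypothesis, together with the explicit formulas for left and right action from Section~\ref{sec4.2}, yields four key facts: (i) \(U_j\) acts trivially on \(\lambda\) by left action (since \(\supp(\lambda)\) avoids column \(j\)), whence \(L(\lambda) = H*\lambda\); (ii) \(U_j\) surjects from \(\e_{ij}\) onto \(L(\e_{ij})\) by left action; (iii) the left action preserves the property ``\(\supp\) avoids row \(i\)'' (a direct check, since left action of any \(g\) on \(\e_{mp}\) only produces basis vectors of the form \(\e_{ma}\) with the same first index \(m\)), so \(U^i\) acts trivially by right action on every \(\mu_1 \in L(\lambda)\); and symmetrically (iv) \(H'\) acts trivially by right action on every \(\mu_2 \in L(\e_{ij})\) (whose support lies in row \(i\)), while \(U^i\) surjects from any such \(\mu_2\) onto \(R(\mu_2)\). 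Using (i)--(ii), for any \((\mu_1,\mu_2) \in L(\lambda)\times L(\e_{ij})\) I exhibit a single \(g = hu \in H\cdot U_j\) with \(g*\lambda = \mu_1\) and \(g*\e_{ij} = \mu_2\) by first picking \(h\) with \(h*\lambda = \mu_1\) and then picking \(u\) with \(u*\e_{ij} = h^{-1}*\mu_2\); using (iii)--(iv), an identical argument on the right produces a single \(g' = h'u' \in H' \cdot U^i\) sending any such \((\mu_1,\mu_2)\) to any prescribed \((\nu_1,\nu_2) \in R(\mu_1)\times R(\mu_2)\).

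To conclude: given \((\nu_1,\nu_2) \in \Psi(\lambda) \times \E_{ij}\), use \(\Psi(\lambda) = \bigcup_{\mu_1 \in L(\lambda)} R(\mu_1)\) and the analogous identity for \(\E_{ij}\) to write \(\nu_1 \in R(\mu_1)\) and \(\nu_2 \in R(\mu_2)\) for some \(\mu_1 \in L(\lambda)\), \(\mu_2 \in L(\e_{ij})\); concatenating the left action \(g\) and the right action \(g'\) constructed above gives \(g*(\lambda+a\e_{ij})*g' = \nu_1 + a\nu_2\), placing this element in \(\Psi(\lambda+a\e_{ij})\). The main obstacle is the support-preservation claim in (iii) and the bookkeeping of the two subgroup factorizations; these are routine but must be verified carefully from the explicit action formulas.
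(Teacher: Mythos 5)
Your argument is correct in substance, and it reaches the reverse inclusion by a genuinely different (and stronger) route than the paper. The paper never proves simultaneous transitivity on $\Psi(\lambda)\times\E_{ij}$: it writes down the elements of $a\cdot\E_{ij}$ explicitly, observes that the group elements producing them from $a\cdot\e_{ij}$ (left action by column-$j$ elementary matrices, right action by row-$i$ elementary matrices) fix $\lambda$ by the support hypothesis, so that $\lambda+a\cdot\E_{ij}\subset\Psi(\lambda+a\cdot\e_{ij})$, and then transports this whole set by a further double action to get $\lambda'+a\cdot\E_{ij}\subset\Psi(\lambda+a\cdot\e_{ij})$ for every $\lambda'\in\Psi(\lambda)$; the forward inclusion is, as in your write-up, just linearity and closure under the double action. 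Your proof replaces this two-step transport by a one-shot construction of a pair $(g,g')$ hitting any prescribed $(\nu_1,\nu_2)\in\Psi(\lambda)\times\E_{ij}$, at the cost of introducing and verifying the subgroup factorizations $U(n,\K)=H\cdot U_j$ and its row-$i$ analogue. Both proofs hinge on the same decoupling phenomenon; yours establishes more (surjectivity of $(g,g')\mapsto(g*\lambda*g',\,g*\e_{ij}*g')$ onto the full product), while the paper's is shorter and needs no factorization.

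Two points need repair. First, your justification of $U(n,\K)=H\cdot U_j$ by counting ($|H|\cdot|U_j|=|U(n,\K)|$) is only available when $\K$ is finite, whereas the lemma is stated over an arbitrary field; you should instead solve $g=hu$ directly (the column-$j$ parameters of $u$ are determined by a triangular system, so existence and uniqueness hold over any $\K$). Second, the argument on the right is not literally ``identical'': with your order $g'=h'u'$ one has $\mu*(h'u')=(\mu*h')*u'$, so if you choose $h'$ first with $\mu_1*h'=\nu_1$ and then $u'$ with $\mu_2*u'=\nu_2$, the final $u'$ disturbs $\nu_1$ --- elements of $R(\mu_1)$, unlike elements of $L(\lambda)$, can acquire support in row $i$ (right action moves support downward into row $i$ from rows above), so fact (iii) does not apply to them. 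The fix is routine: either use the factorization $U(n,\K)=U^i\cdot H'$ with $u'$ acting first on the functional (the true mirror of $g=hu$, since left and right actions compose in opposite orders), or keep $g'=h'u'$ but choose $u'$ first with $\mu_2*u'=\nu_2$ and then $h'$ with $\mu_1*h'=\nu_1*u'^{-1}$. As written, though, that step does not go through.
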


\begin{proof}
The statement is trivial when $a=0$, so assume $a\not=0$. 
The elements of $a\cdot\E_{ij}$ are of the form
\[
a\cdot \e_{ij}+\sum_{i<k<j}a_k\cdot \e_{kj}+\sum_{i<l<j}a'_l\cdot \e_{il}+
\sum_{i<k<l<j}a^{-1}\cdot a_k\cdot a'_l\cdot \e_{kl}
\]
where $a_k$ and $a'_l$ are elements in $\K\,$.
The condition on $\supp(\lambda)$
makes it possible to bring $\lambda+a\cdot \e_{ij}$ to
\[
\lambda+a\cdot \e_{ij}+\sum_{i<k<j}a_k\cdot \e_{kj}+\sum_{i<l<j}a'_l\cdot \e_{il}+
\sum_{i<k<l<j}a^{-1}\cdot a_k\cdot a'_l\cdot \e_{kl}
\]
through a double action of $U(n,\K)$, so we see that $\lambda+a\cdot\E_{ij}$ is contained
in the cluster $\Psi(\lambda+a\cdot \e_{ij})$. For any $\lambda'\in\Psi(\lambda)$ we
can bring $\lambda+a\cdot\E_{ij}$ to $\lambda'+a\cdot\E_{ij}$ through a double action, so 
$\lambda'+a\cdot\E_{ij}$ is also in $\Psi(\lambda+a\cdot \e_{ij})$. The entire sum
$\Psi(\lambda)+a\cdot\E_{ij}$
is therefore contained in $\Psi(\lambda+a\cdot \e_{ij})$. Finally, the double action is linear, so the
sum is closed under the double action, and must be equal to $\Psi(\lambda+a\cdot \e_{ij})$.
\end{proof}

\begin{theorem}\label{4.4}
For any $\tau\in\T^*(n,\K)$,
\[
\Psi(\tau)\,=\sum_{1\le i<j\le n}\tau(e_{ij})\cdot \E_{ij}.
\]
\end{theorem}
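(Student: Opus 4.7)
The plan is to prove this by induction on the size of $\supp(\tau)$, peeling off one position at a time and applying Lemma \ref{4.3} repeatedly. Since $\tau$ is a coadjoint template, its support has at most one position in each row and each column, which is exactly the hypothesis needed to apply Lemma \ref{4.3}.

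For the base case, when $\supp(\tau)$ is empty we have $\tau = 0$, the sum on the right is empty (equivalently $\{0\}$), and $\Psi(0) = \{0\}$, so both sides agree. Note also that positions $(i,j) \notin \supp(\tau)$ contribute $\tau(e_{ij}) \cdot \E_{ij} = 0 \cdot \E_{ij} = \{0\}$ to the sum on the right, so we may freely restrict the summation to $\supp(\tau)$ without changing its value.

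For the inductive step, suppose $\supp(\tau)$ is nonempty and choose any position $(i_0, j_0) \in \supp(\tau)$. Write $a = \tau(e_{i_0 j_0})$, which is nonzero by assumption, and set
\[
\tau' \,=\, \tau - a\cdot\e_{i_0 j_0} \,=\!\! \sum_{(i,j)\in\supp(\tau)\setminus\{(i_0,j_0)\}} \!\!\tau(e_{ij})\cdot\e_{ij}\,.
\]
Then $\tau'$ is still a coadjoint template, and because the positions of $\supp(\tau)$ lie in distinct rows and distinct columns, $\supp(\tau') = \supp(\tau)\setminus\{(i_0,j_0)\}$ contains no position in row $i_0$ or column $j_0$. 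This is exactly the hypothesis of Lemma \ref{4.3} applied to $\tau'$, so
\[
\Psi(\tau)\,=\,\Psi(\tau' + a\cdot\e_{i_0 j_0})\,=\,\Psi(\tau') + a\cdot\E_{i_0 j_0}\,.
\]
By the inductive hypothesis applied to $\tau'$,
\[
\Psi(\tau')\,=\!\!\!\sum_{(i,j)\in\supp(\tau')}\!\!\!\tau'(e_{ij})\cdot\E_{ij}\,=\!\!\sum_{(i,j)\in\supp(\tau)\setminus\{(i_0,j_0)\}}\!\!\tau(e_{ij})\cdot\E_{ij}\,,
\]
and adding the remaining term $a\cdot\E_{i_0 j_0} = \tau(e_{i_0 j_0})\cdot\E_{i_0 j_0}$ yields the desired identity.

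The main obstacle is really just verifying that Lemma \ref{4.3} is applicable at each step, which amounts to the observation that the template condition (at most one position per row and per column) is preserved by removing one entry from the support and guarantees that the removed row and column remain free of all other support positions. Once this is checked, the induction goes through cleanly and no further work is required.
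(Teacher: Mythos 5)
Your proof is correct and follows essentially the same route as the paper: induction on $|\supp(\tau)|$, peeling off one support position and applying Lemma \ref{4.3}, with the template condition guaranteeing the hypothesis of that lemma. Your write-up is a bit more explicit about the base case and about why the removed row and column are free of the remaining support, but the argument is identical in substance.
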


\begin{proof}
We proceed by induction on the size of $\supp(\tau)$.
Suppose $\tau(e_{kl})$ is non-zero for some $k$ and $l$. Put 
$\tau'=\tau-\tau(e_{kl})\cdot \e_{kl}$,
then $\tau'$ is again an element of $\T^*(n,\K)$, 
with a smaller-sized support than $\tau$, so we can assume that
\[
\Psi(\tau')\,=\sum_{1\le i<j\le n}\tau'(e_{ij})\cdot \E_{ij}.
\]
Now $\supp(\tau')$ does not contain any position in
the $k$-th row nor the $l$-th column, so by Lemma \ref{4.3}
\[
\Psi(\tau)\,=\,\Psi(\tau')+\tau(e_{kl})\cdot\E_{kl}\,=\sum_{1\le i<j\le n}\tau(e_{ij})\cdot \E_{ij}.
\]
\end{proof}

\begin{corollary}
If $\tau_1$ and $\tau_2$ in $\T^*(n,\K)$ are supported in different 
rows and different columns then $\Psi(\tau_1+\tau_2)=\Psi(\tau_1)+\Psi(\tau_2)$. 
\end{corollary}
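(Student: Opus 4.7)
The plan is to reduce everything to Theorem \ref{4.4} applied to each of the three templates $\tau_1$, $\tau_2$, and $\tau_1+\tau_2$. The first step is to verify that $\tau_1+\tau_2$ is itself a coadjoint template, i.e.\ lies in $\T^*(n,\K)$. This is where the hypothesis that $\tau_1$ and $\tau_2$ are supported in different rows and different columns plays its essential role: since each of $\tau_1$ and $\tau_2$ contributes at most one non-zero position per row and per column, and their supports occupy disjoint sets of rows and disjoint sets of columns, the sum $\tau_1+\tau_2$ still has at most one non-zero entry in each row and in each column.

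Next I would record the simple observation that under the same hypothesis $\supp(\tau_1)$ and $\supp(\tau_2)$ are disjoint as sets of positions, so there is no cancellation: $\supp(\tau_1+\tau_2)=\supp(\tau_1)\cup\supp(\tau_2)$, and for every $(i,j)$ in this union,
\[
(\tau_1+\tau_2)(e_{ij})\,=\,\tau_1(e_{ij})+\tau_2(e_{ij}),
\]
where one of the two summands is zero.

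With these two facts in hand, the final step is purely bookkeeping. Applying Theorem \ref{4.4} to $\tau_1+\tau_2$ gives
\[
\Psi(\tau_1+\tau_2)\,=\sum_{1\le i<j\le n}(\tau_1+\tau_2)(e_{ij})\cdot\E_{ij},
\]
and splitting the sum according to whether $(i,j)$ lies in $\supp(\tau_1)$ or in $\supp(\tau_2)$ (the remaining positions contributing zero) yields
\[
\sum_{1\le i<j\le n}\tau_1(e_{ij})\cdot\E_{ij}\,+\,\sum_{1\le i<j\le n}\tau_2(e_{ij})\cdot\E_{ij},
\]
which by Theorem \ref{4.4} applied to each of $\tau_1$ and $\tau_2$ is $\Psi(\tau_1)+\Psi(\tau_2)$.

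There is no real obstacle here; the corollary is essentially a packaging of Theorem \ref{4.4}. The only point requiring any care is the verification that $\tau_1+\tau_2\in\T^*(n,\K)$, since Theorem \ref{4.4} is stated only for templates — and this is exactly what the "different rows and different columns" hypothesis delivers.
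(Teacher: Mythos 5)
Your proof is correct and follows exactly the route the paper intends: the corollary is stated without proof as an immediate consequence of Theorem \ref{4.4}, and your write-up supplies precisely the implicit details — that the disjointness of rows and columns guarantees $\tau_1+\tau_2\in\T^*(n,\K)$ so the theorem applies, and that the resulting sum of primary clusters splits according to the disjoint supports. No gaps.
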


\section{The number of cluster characters}\label{number}

Cluster characters naturally correspond to coadjoint clusters, while conjugacy clusters naturally
correspond to adjoint clusters. From Theorems \ref{4.1} and \ref{4.2} we see that the 
same set of combinatorial data naturally index both the cluster characters and the conjugacy clusters.
This is analogous to the fact that the partitions of $n$ index both the irreducible characters and the conjugacy classes of the
symmetric group $S_n$.

The number $B(n,q)$ of cluster characters can be calculated by enumerating the 
elements in $\T(n,\F_q)$. For example
\begin{eqnarray*}  
B(1,q) & = & 1, \\
B(2,q) & = & q, \\
B(3,q) & = & 1+3(q-1)+(q-1)^2, \\
B(4,q) & = & 1+6(q-1)+7(q-1)^2+(q-1)^3.
\end{eqnarray*}
In general we have the following recurrence relation, which shows that $B(n,q)$ is a polynomial version of the Bell
numbers.

\begin{theorem}\label{5.1}
$B(n,q)$ is determined by the recurrence relation
\[
B(n+1,q)=\sum_{k=0}^n\binom{n}{k}(q-1)^{n-k}\cdot B(k,q)
\]
with $B(0,q)=1$.
\end{theorem}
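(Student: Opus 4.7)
The plan is a bijective argument on $\T(n+1, \F_q)$, conditioning on the subset of $\{2, \ldots, n+1\}$ linked to vertex $1$ by a chain of nonzero entries of the template.

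Given $X \in \T(n+1, \F_q)$, I would associate the graph $G_X$ on $\{1, \ldots, n+1\}$ with an edge $\{i, j\}$ for each position $(i, j)$ at which $X$ has a nonzero entry. The template hypothesis says every vertex is incident to at most one edge going down (arising from its role as a column index) and at most one going up (arising from its role as a row index), so $G_X$ is a disjoint union of strictly ascending paths. Since vertex $1$ has no neighbor below, it is either isolated or is the bottom of a unique ascending path $1 = v_0 < v_1 < \cdots < v_\ell$, carrying $\ell$ nonzero field values on its edges.

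This initial path is encoded by the unordered set $\{v_1, \ldots, v_\ell\} \subseteq \{2, \ldots, n+1\}$ (there are $\binom{n}{\ell}$ choices, with the ordering of the $v_i$ forced) together with $\ell$ values in $\F_q^*$, contributing a factor of $(q-1)^\ell$. The remaining nonzero entries of $X$ occur at positions $(i, j)$ with $i, j$ in the complement $\{2, \ldots, n+1\} \setminus \{v_1, \ldots, v_\ell\}$; they inherit the template property on that $(n-\ell)$-element totally ordered set, hence after order-preserving relabeling correspond to an arbitrary element of $\T(n - \ell, \F_q)$. Conversely, any triple of subset, value assignment, and residual template reassembles into a valid element of $\T(n+1, \F_q)$, since the rows $\{1, v_1, \ldots, v_{\ell-1}\}$ and columns $\{v_1, \ldots, v_\ell\}$ used by the path are disjoint from those used by the residual template.

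Summing this bijective count over $\ell \in \{0, \ldots, n\}$ yields
$$B(n+1, q) \;=\; \sum_{\ell = 0}^{n} \binom{n}{\ell} (q-1)^\ell B(n - \ell, q),$$
which becomes the stated recurrence under the substitution $k = n - \ell$, and $B(0, q) = 1$ since $\T(0, \F_q)$ consists of the unique empty matrix. The only structural point to verify with care is that $G_X$ decomposes into ascending paths and that the path through vertex $1$ is canonically defined; once that observation is in place, the recurrence is a matter of routine bookkeeping.
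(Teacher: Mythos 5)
Your argument is correct, and it is worth noting that the paper states Theorem \ref{5.1} without any proof at all, so you are supplying the missing argument rather than paralleling one. The decomposition you use --- observing that a template is a disjoint union of strictly ascending, edge-labelled paths and conditioning on the path containing the vertex $1$ --- is exactly the $q$-analogue of the classical proof of the Bell-number recurrence (conditioning on the block containing a distinguished element), which is the analogy the paper itself invokes; the one structural point needing care, that the residual nonzero entries avoid every row and column met by the initial path, is forced by the template condition together with the maximality of that path, just as you indicate.
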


\section{The degree and the self-interwining number}\label{degree}

In this section we express the degree and the self-interwining number of a cluster
character $\chi(\Psi)$ in terms of the dimension index $d(\Psi)$ and the interwining index $i(\Psi)$ 
introduced in Section \ref{sec4.3}.

We will need a well-known fact, stated in the following lemma:

\begin{lemma}\label{6.1}
Let $G$ and $H$ be finite groups. Let $S$ be a finite set on which $G$ has a left action and
$H$ has a right action. Then for any element $s$ of $S$,
\[
|GsH|=\frac{|Gs|\cdot |sH|}{|Gs\cap sH|}\,.
\]
\end{lemma}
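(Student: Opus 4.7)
The plan is to realize $GsH$ as a single orbit under a product action and then apply orbit-stabilizer. The left $G$-action and the right $H$-action on $S$ commute (this is implicit in writing expressions like $gsh$ unambiguously, and is certainly the case in the intended application where $G=H=U(n,\F_q)$ acts by matrix multiplication on $\u(n,\F_q)$ or $\u^*(n,\F_q)$). Hence the formula $(g,h)\cdot x = gxh$ defines an honest action of $G\times H$ on $S$, and the orbit of $s$ under this action is exactly $GsH$. By orbit-stabilizer,
\[
|GsH|\,=\,\frac{|G|\cdot|H|}{|\mathrm{Stab}_{G\times H}(s)|}.
\]

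The heart of the proof is to evaluate $\mathrm{Stab}_{G\times H}(s)=\{(g,h):gsh=s\}$ in terms of the two ingredients on the right-hand side. First decompose the count by $g$: the condition $gsh=s$ forces $gs=sh^{-1}\in sH$, so only those $g$ with $gs\in Gs\cap sH$ contribute. For each such $g$, the set of compatible $h\in H$ is a coset of $\mathrm{Stab}_H(s):=\{h:sh=s\}$, hence has cardinality $|\mathrm{Stab}_H(s)|=|H|/|sH|$. Second, count the admissible $g$'s: the map $g\mapsto gs$ sends $\{g\in G:gs\in sH\}$ onto $Gs\cap sH$ with fibres equal to cosets of $\mathrm{Stab}_G(s)$, so there are $|Gs\cap sH|\cdot |\mathrm{Stab}_G(s)| = |Gs\cap sH|\cdot |G|/|Gs|$ of them. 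Multiplying,
\[
|\mathrm{Stab}_{G\times H}(s)|\,=\,|Gs\cap sH|\cdot\frac{|G|}{|Gs|}\cdot\frac{|H|}{|sH|}.
\]

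Substituting into the orbit-stabilizer formula and cancelling $|G||H|$ yields the claim. There is really no obstacle here; the only point demanding care is the bookkeeping in the two-stage count of $\mathrm{Stab}_{G\times H}(s)$, and the tacit assumption that the two actions commute (needed to define the $G\times H$-action in the first place). Both will be stated explicitly at the appropriate places.
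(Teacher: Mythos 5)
Your argument is correct, and the counting at the end is exactly right: the paper itself offers no proof of this lemma (it is quoted as a ``well-known fact''), so there is nothing to compare against beyond checking your computation, which checks out. One small technical point deserves a remark: the rule $(g,h)\cdot x = gxh$ is a left action of $G\times H^{\mathrm{op}}$ rather than of $G\times H$ (composing gives $g_1g_2xh_2h_1$, with the $H$-factors in reversed order), so strictly you should either use $(g,h)\cdot x = gxh^{-1}$ or work with $H^{\mathrm{op}}$. This changes nothing in the count, since $|H^{\mathrm{op}}|=|H|$, the orbit of $s$ is still the set $GsH$, and the set $\{(g,h):gsh=s\}$ you enumerate is carried bijectively onto the true stabilizer by $(g,h)\mapsto(g,h^{-1})$; but the fix should be made explicit. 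You are also right that commutativity of the two actions must be assumed for the statement to make sense, and it does hold in the paper's applications (left and right multiplication on $\u(n,\K)$ and the induced actions on $\u^*(n,\K)$). An equivalent, slightly more elementary route, in case you prefer to avoid the product action: $GsH$ is a disjoint union of right $H$-orbits $gsH$, each of size $|sH|$ because $\mathrm{Stab}_H(gs)=\mathrm{Stab}_H(s)$ by commutativity, and the fibre of the map $Gs\to\{H\text{-orbits}\}$, $gs\mapsto gsH$, over $gsH$ is $g(Gs\cap sH)$, of size $|Gs\cap sH|$; this gives $|GsH|=|sH|\cdot|Gs|/|Gs\cap sH|$ directly.
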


\begin{theorem}\label{6.2}
For a coadjoint cluster $\Psi$ in $\u^*(n,\F_q)$, the cluster character $\chi(\Psi)$ has
degree $q^{d(\Psi)}$ and self-interwining number $q^{i(\Psi)}$. The size of
$\Psi$ is $q^{2d(\Psi)-i(\Psi)}$.
\end{theorem}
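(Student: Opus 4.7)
The plan is to extract all three numerical quantities from the two main formulas already established: the identification $\deg\chi(\Psi) = |L(\lambda)|$ from Section \ref{module}, Corollary \ref{3.7} for the self-intertwining number, and Lemma \ref{6.1} for the size of the double-orbit $\Psi$. The whole argument reduces to counting points in three $\F_q$-linear subspaces.

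First I would show $\deg\chi(\Psi) = q^{d(\Psi)}$. Recall from Section \ref{module} that $V(\lambda)$ has dimension $|L(\lambda)|$, where $L(\lambda)$ is the left $U(n,\F_q)$-orbit of $\lambda$. Since $L(\lambda) = \lambda + \L(\lambda)$ is a translate of the $\F_q$-linear subspace $\L(\lambda)\subseteq \u^*(n,\F_q)$, we have $|L(\lambda)| = |\L(\lambda)| = q^{\dim\L(\lambda)} = q^{d(\Psi)}$.

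Next I would compute $|\Psi|$ by applying Lemma \ref{6.1} with $G = H = U(n,\F_q)$ acting on $S = \u^*(n,\F_q)$ by the left and right actions respectively, and $s = \lambda$. Then $Gs = L(\lambda)$, $sH = R(\lambda)$, and $GsH = \Psi$, yielding
\[
|\Psi| \;=\; \frac{|L(\lambda)|\cdot|R(\lambda)|}{|L(\lambda)\cap R(\lambda)|}.
\]
The key small calculation, which I view as the only non-cosmetic step, is to identify the intersection $L(\lambda)\cap R(\lambda)$ as a coset of $\L(\lambda)\cap\R(\lambda)$. Indeed, if $\mu = \lambda + a = \lambda + b$ with $a\in\L(\lambda)$ and $b\in\R(\lambda)$, then $a = b \in \L(\lambda)\cap\R(\lambda)$, and conversely any $c\in\L(\lambda)\cap\R(\lambda)$ gives $\lambda + c \in L(\lambda)\cap R(\lambda)$. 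Hence $|L(\lambda)\cap R(\lambda)| = q^{i(\Psi)}$, and combining with $|L(\lambda)| = |R(\lambda)| = q^{d(\Psi)}$ (Proposition \ref{4.6}) gives $|\Psi| = q^{2d(\Psi)-i(\Psi)}$.

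Finally, the self-intertwining number drops out from Corollary \ref{3.7}:
\[
\langle\chi(\Psi),\chi(\Psi)\rangle \;=\; \frac{[\deg\chi(\Psi)]^2}{|\Psi|} \;=\; \frac{q^{2d(\Psi)}}{q^{2d(\Psi)-i(\Psi)}} \;=\; q^{i(\Psi)}.
\]
The main obstacle is really just the coset identification of $L(\lambda)\cap R(\lambda)$, since once that is in hand the three formulas assemble from earlier results with no further work.
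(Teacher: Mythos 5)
Your proof is correct and follows essentially the same route as the paper: identify $\deg\chi(\Psi)$ with $|L(\lambda)|=|\L(\lambda)|$, compute $|\Psi|$ via Lemma \ref{6.1}, and read off the self-intertwining number from Corollary \ref{3.7}. The coset identification $L(\lambda)\cap R(\lambda)=\lambda+\bigl(\L(\lambda)\cap\R(\lambda)\bigr)$ that you single out is exactly the step the paper passes over silently when it replaces $|L(\lambda)\cap R(\lambda)|$ by $|\L(\lambda)\cap\R(\lambda)|$, so making it explicit is a small improvement rather than a deviation.
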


\begin{proof}
Let $\lambda\in\Psi$.
In the notations of Section \ref{4.4}, the lemma above says that
\[
|\Psi|=\frac{|L(\lambda)|\cdot |R(\lambda)|}{|L(\lambda)\cap R(\lambda)|}
=\frac{|\L(\lambda)|\cdot |\R(\lambda)|}{|\L(\lambda)\cap \R(\lambda)|}
=q^{2d(\Psi)-i(\Psi)}.
\]
The degree of $\chi(\Psi)$ is equal to the size of $L(\lambda)$, so
\[
\deg\chi(\Psi)=|L(\lambda)|=|\L(\lambda)|=q^{d(\Psi)}.
\]
By Corollary \ref{3.7} we have
\[
\langle\, \chi(\Psi),\ \chi(\Psi)\, \rangle =
\frac{[\deg\chi(\Psi)]^2}{|\Psi|}=q^{i(\Psi)}. 
\]
\end{proof}

\begin{corollary}\label{6.4}
$\chi(\Psi)$ is irreducible if and only if $i(\Psi)$ is zero.
\end{corollary}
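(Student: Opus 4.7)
The plan is to deduce this almost directly from Theorem \ref{6.2}, once we observe that $\chi(\Psi)$ is a genuine character of $U(n,\F_q)$ rather than merely a virtual character. This is immediate from Section \ref{module}: by construction $\chi(\Psi) = \chi(\lambda)$ is the trace of the actual representation of $U(n,\F_q)$ on the cluster module $V(\lambda) = V(L)$, where $L$ is the left orbit of $\lambda$. So $\chi(\Psi)$ decomposes as a non-negative integral combination of irreducible characters of $U(n,\F_q)$.

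Given that $\chi(\Psi)$ is a genuine character, the standard orthogonality criterion from the representation theory of finite groups tells us that $\chi(\Psi)$ is irreducible if and only if $\langle \chi(\Psi), \chi(\Psi) \rangle = 1$. Theorem \ref{6.2} evaluates this self-intertwining number explicitly as $q^{i(\Psi)}$. Since $q \ge 2$, the equation $q^{i(\Psi)} = 1$ holds precisely when the exponent $i(\Psi)$ is zero.

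Thus the proof is essentially a one-line combination of the self-intertwining computation from Theorem \ref{6.2} with the fact that $\chi(\Psi)$ comes from an honest module. There is no real obstacle to overcome; the only subtlety worth flagging is the need to invoke that $\chi(\Psi)$ is a character (not just a class function or virtual character), because the orthogonality criterion would fail for general virtual characters of norm one. Once that is noted, the equivalence $i(\Psi) = 0 \iff q^{i(\Psi)} = 1 \iff \chi(\Psi)$ is irreducible follows immediately.
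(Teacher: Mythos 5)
Your proof is correct and follows exactly the route the paper intends: Corollary \ref{6.4} is stated as an immediate consequence of Theorem \ref{6.2}, via the standard criterion that a genuine character is irreducible precisely when its self-intertwining number equals $1$, combined with $\langle \chi(\Psi), \chi(\Psi)\rangle = q^{i(\Psi)}$ and $q \ge 2$. Your remark that one must know $\chi(\Psi)$ is an honest character (coming from the cluster module $V(\lambda)$) rather than a virtual class function is exactly the right point to flag.
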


The character formula in equation \eqref{3.0.2} now becomes
\begin{equation}\label{6.0.1}
\chi(\Psi)=q^{i(\Psi)-d(\Psi)}\cdot
\sum_{\lambda\in\Psi}v(\lambda)\,.
\end{equation} 
We can also restate Theorem \ref{3.4}:

\begin{corollary}\label{6.3}
The regular character of $U(n,\F_q)$ is equal to the sum
\[
\sum_{\Psi}q^{d(\Psi)-i(\Psi)}\cdot\chi(\Psi)
\]
over the collection of coadjoint clusters in $\u^*(n,\F_q)$.
\end{corollary}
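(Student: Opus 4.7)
The plan is to obtain Corollary \ref{6.3} as an immediate consequence of the preceding two results in the excerpt, namely Theorem \ref{3.4} and Theorem \ref{6.2}, so essentially no new argument is needed beyond a substitution.

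First I would recall the decomposition of the regular character given in Theorem \ref{3.4}, which expresses it as
\[
\sum_{\Psi}\frac{|\Psi|}{\deg\chi(\Psi)}\cdot\chi(\Psi)
\]
summed over coadjoint clusters $\Psi$ in $\u^*(n,\F_q)$. The goal is then to simplify the rational coefficient $|\Psi|/\deg\chi(\Psi)$ into the claimed power of $q$.

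Next I would invoke Theorem \ref{6.2}, which has just been established and states $\deg\chi(\Psi)=q^{d(\Psi)}$ and $|\Psi|=q^{2d(\Psi)-i(\Psi)}$. Substituting these two identities yields
\[
\frac{|\Psi|}{\deg\chi(\Psi)}=\frac{q^{2d(\Psi)-i(\Psi)}}{q^{d(\Psi)}}=q^{d(\Psi)-i(\Psi)},
\]
and plugging this back into the formula from Theorem \ref{3.4} gives the stated expression for the regular character.

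There is no genuine obstacle here; the entire content is the arithmetic simplification above. All the nontrivial work has already been done in proving Theorem \ref{3.4} (the decomposition of the regular representation into cluster submodules) and Theorem \ref{6.2} (the identification of $\deg\chi(\Psi)$, $|\Psi|$, and the self-intertwining number with powers of $q$ indexed by $d(\Psi)$ and $i(\Psi)$). Accordingly I would keep the write-up to a one- or two-line computation citing these two results.
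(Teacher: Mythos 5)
Your proposal is correct and is exactly how the paper obtains this corollary: the text introduces it with ``We can also restate Theorem \ref{3.4}'', and the coefficient $|\Psi|/\deg\chi(\Psi)=q^{2d(\Psi)-i(\Psi)}/q^{d(\Psi)}=q^{d(\Psi)-i(\Psi)}$ follows by substituting the identities of Theorem \ref{6.2}. Nothing further is needed.
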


\section{Primary decomposition of cluster characters}\label{primary}

In this section we use the primary decomposition for coadjoint clusters established in Section \ref{sec4.4} to
derive a corresponding decomposition for cluster characters.

\begin{definition}
We say a cluster character is {\it primary} if it is associated to a primary coadjoint cluster.
By definition, $\chi(a\cdot\e_{ij})$ is primary when $a$ is any non-zero element of $\F_q$.
By Corollary \ref{6.4} the primary cluster characters are irreducible. 
\end{definition}

We now show that each cluster character can be decomposed into a tensor product of primary cluster characters.

\begin{theorem}\label{7.1}
For $\tau\in\T^*(n,\F_q)$,
\[
\chi(\tau)=\prod_{1\le i<j\le n}\chi(\tau(e_{ij})\cdot\e_{ij}).
\]
\end{theorem}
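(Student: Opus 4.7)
The plan is to use the character formula \eqref{6.0.1} and the multiplicativity \eqref{3.0.1} of the Fourier basis to rewrite both sides, then reduce the identity to a counting statement about the Minkowski sum $\Psi(\tau)=\sum_{(i,j)\in\supp(\tau)}\tau(e_{ij})\cdot\E_{ij}$ from Theorem \ref{4.4}. Any factor on the right-hand side with $\tau(e_{ij})=0$ is the trivial character and can be dropped. Each remaining primary factor has single-position support, hence interwining index zero, so \eqref{6.0.1} gives $\chi(\tau(e_{ij})\cdot\e_{ij})=q^{-d(\E_{ij})}\sum_{\mu\in\tau(e_{ij})\cdot\E_{ij}}v(\mu)$. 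Multiplying these and using $v(\mu_1+\mu_2)=v(\mu_1)v(\mu_2)$ yields
\[
\prod_{(i,j)\in\supp(\tau)}\chi(\tau(e_{ij})\cdot\e_{ij})
= q^{-\sum_{(i,j)\in\supp(\tau)} d(\E_{ij})}\sum_{(\mu_{ij})}v\Bigl(\sum_{(i,j)\in\supp(\tau)}\mu_{ij}\Bigr),
\]
where $(\mu_{ij})$ ranges over $\prod_{(i,j)\in\supp(\tau)}\tau(e_{ij})\cdot\E_{ij}$.

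The key step is to analyze the summation map $\Phi\colon(\mu_{ij})\mapsto\sum\mu_{ij}$. Theorem \ref{4.4} identifies its image as $\Psi(\tau)$, so $\Phi$ surjects onto $\Psi(\tau)$. I would prove that all fibers of $\Phi$ share a common size. The crucial observation is that $\Phi$ is equivariant with respect to the diagonal double action of $U(n,\F_q)\times U(n,\F_q)$ on the source (which preserves the product because each factor $\tau(e_{ij})\cdot\E_{ij}$ is itself a coadjoint cluster, hence closed under the double action) and the double action on the target; equivariance reduces to the $\K$-linearity of the left and right actions on $\u^*(n,\F_q)$. Since $\Psi(\tau)$ is a single $U(n,\F_q)\times U(n,\F_q)$-orbit, any $(g,g')$ sending $\lambda_1$ to $\lambda_2$ in $\Psi(\tau)$ produces a bijection between the corresponding fibers, so the fibers share a common cardinality $N$.

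To finish I would compute $N$ by double counting. By Theorem \ref{6.2}, $|\Psi(\tau)|=q^{2d(\tau)-i(\tau)}$ and $|\tau(e_{ij})\cdot\E_{ij}|=q^{2d(\E_{ij})}$, so $N=q^{2\sum d(\E_{ij})-2d(\tau)+i(\tau)}$. Here I need the combinatorial additivity $d(\tau)=\sum_{(i,j)\in\supp(\tau)}d(\E_{ij})$, which is immediate from the visual description of $d$ in Section \ref{sec4.3}: the positions counted by $d(\tau)$ partition according to which column of $\supp(\tau)$ they share, each $(i,j)\in\supp(\tau)$ contributes exactly $j-i-1=d(\E_{ij})$ positions, and there is no overlap because the columns of $\supp(\tau)$ are pairwise distinct. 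Hence $N=q^{i(\tau)}$, and substituting back yields
\[
\prod_{(i,j)\in\supp(\tau)}\chi(\tau(e_{ij})\cdot\e_{ij})
= q^{i(\tau)-d(\tau)}\sum_{\lambda\in\Psi(\tau)}v(\lambda)=\chi(\tau)
\]
by \eqref{6.0.1}. The main obstacle is the fiber-uniformity step; it is resolved cleanly by equivariance and the transitivity of the double action on the coadjoint cluster $\Psi(\tau)$. An alternative route, should one prefer induction, is to iterate Lemma \ref{4.3} and reduce to the two-factor case $\chi(\tau)=\chi(\tau')\cdot\chi(a\cdot\e_{kl})$ with $\supp(\tau')$ avoiding row $k$ and column $l$, where the same equivariance argument applies to $\Psi(\tau')\times a\cdot\E_{kl}\to\Psi(\tau)$.
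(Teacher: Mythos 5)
Your proposal is correct and follows essentially the same route as the paper's own proof: both rewrite each side via the character formula, use Theorem \ref{4.4} to identify $\Psi(\tau)$ with the image of the summation map on $\prod_{(i,j)}\tau(e_{ij})\cdot\E_{ij}$, establish uniform fiber size by equivariance under the double action, and pin that size down as $q^{i(\tau)}$ by the same counting (your explicit verification of $d(\tau)=\sum d(\E_{ij})$ is just the unpacking of the paper's ``easy to verify from Theorem \ref{6.2}'' step).
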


\begin{proof}
Write $\Psi_{ij}(\tau)=\tau(e_{ij})\cdot\E_{ij}$ and $\chi_{ij}(\tau)=\chi(\tau(e_{ij})\cdot\e_{ij})$.
The character formula in equation \ref{3.0.2} says
\begin{eqnarray}
\chi(\tau) &=&
\frac{\deg\chi(\tau)}{|\Psi(\tau)|}\cdot
\sum_{\lambda\in\Psi(\tau)}v(\lambda)\,, \label{7.1.1}\\
\chi_{ij}(\tau) &=&
\frac{\deg\chi_{ij}(\tau)}{|\Psi_{ij}(\tau)|}\cdot
\sum_{\lambda\in\Psi_{ij}(\tau)}v(\lambda)\,.
\end{eqnarray}
It is easy to verify from Theorem \ref{6.2} that
\begin{eqnarray}
\deg\chi(\tau) &=&
\prod_{1\le i<j\le n}\deg\chi_{ij}(\tau) \,, \\
q^{i(\tau)}\cdot|\Psi(\tau)| &=&
\prod_{1\le i<j\le n}|\Psi_{ij}(\tau)| \,. \label{7.1.4} 
\end{eqnarray}
From Theorem \ref{4.4} we know that each element $\lambda$ of $\Psi(\tau)$ can be decomposed
into a sum
\[
\lambda=\sum_{1\le i<j\le n}\lambda_{ij}
\]
where $\lambda_{ij}$ belongs to $\Psi_{ij}(\tau)$. If $\lambda'=g*\lambda *g'$ then each
such decomposition of $\lambda$ gives a corresponding decomposition of $\lambda'$:
\[
\lambda'=\sum_{1\le i<j\le n}g*\lambda_{ij}*g'
\]
so the number of such decompositions is the same for any element of $\Psi(\tau)$. We see
from \eqref{7.1.4} that this number must be $q^{i(\tau)}$. By equation \eqref{3.0.1} each decomposition of 
$\lambda$ gives a corresponding decomposition of $v(\lambda)$:  
\[
v(\lambda)=\prod_{1\le i<j\le n}v(\lambda_{ij})
\]
so
\begin{equation}\label{7.1.5}
q^{i(\tau)}\cdot\sum_{\lambda\in\Psi(\tau)}v(\lambda)=
\prod_{1\le i<j\le n}\left[\sum_{\lambda\in\Psi_{ij}(\tau)}v(\lambda)\right] \,.
\end{equation}
The theorem now follows if we combine \eqref{7.1.1}--\eqref{7.1.5}. 
\end{proof}

\begin{corollary}\label{7.3}
If $\tau_1$ and $\tau_2$ in $\T^*(n,\F_q)$ are supported in different rows and different columns then 
$\chi(\tau_1)\otimes\chi(\tau_2)=\chi(\tau_1+\tau_2)$.
\end{corollary}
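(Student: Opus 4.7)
The plan is to reduce this corollary directly to Theorem \ref{7.1}, which is the heavy lifting already done. The key preliminary observation is that when $\tau_1$ and $\tau_2$ are supported in disjoint sets of rows and disjoint sets of columns, the sum $\tau_1 + \tau_2$ is again an element of $\T^*(n, \F_q)$: indeed $\supp(\tau_1+\tau_2) = \supp(\tau_1)\sqcup\supp(\tau_2)$, and neither this union nor its individual pieces contain two positions sharing a row or column. So Theorem \ref{7.1} applies verbatim to $\tau_1+\tau_2$.

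With that in place, I would apply Theorem \ref{7.1} to $\tau_1+\tau_2$ to get
\[
\chi(\tau_1+\tau_2) = \prod_{1\le i<j\le n}\chi((\tau_1+\tau_2)(e_{ij})\cdot\e_{ij}).
\]
For each position $(i,j)$ at most one of $\tau_1(e_{ij})$, $\tau_2(e_{ij})$ is nonzero (the other contributing the trivial character $\chi(0)$, which is the identity in the product), so the right-hand side factors as
\[
\left[\prod_{1\le i<j\le n}\chi(\tau_1(e_{ij})\cdot\e_{ij})\right]\cdot\left[\prod_{1\le i<j\le n}\chi(\tau_2(e_{ij})\cdot\e_{ij})\right],
\]
which equals $\chi(\tau_1)\cdot\chi(\tau_2)$ by two more applications of Theorem \ref{7.1}.

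Finally, since pointwise multiplication of characters corresponds to the tensor product of the underlying representations, we identify $\chi(\tau_1)\cdot\chi(\tau_2)$ with $\chi(\tau_1)\otimes\chi(\tau_2)$, yielding the claim. There is no real obstacle here; the only thing to be a little careful about is the bookkeeping in the first step (checking that $\tau_1+\tau_2$ is a template and that the product over all $(i,j)$ really does split along the partition of supports), which is immediate from the hypothesis on rows and columns.
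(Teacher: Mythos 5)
Your proof is correct and takes essentially the same route the paper intends: Corollary \ref{7.3} is stated as an immediate consequence of Theorem \ref{7.1}, obtained exactly as you do by observing that $\tau_1+\tau_2\in\T^*(n,\F_q)$ and that the primary decomposition of $\chi(\tau_1+\tau_2)$ splits along the disjoint supports into the primary decompositions of $\chi(\tau_1)$ and $\chi(\tau_2)$.
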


\section{Tensor products of cluster characters}\label{tensor}

We know from Section \ref{module} that cluster characters are cluster functions. 
The tensor product of two cluster characters is again a cluster function, and so 
by Corollary \ref{3.8} it can be decomposed into a ${\mathbb C}$-linear combination of cluster
characters. We will see in this section that the coefficients are always non-negative
integers, so that the product is actually equal to a sum of cluster characters.

For coadjoint clusters $\Psi_1$, $\Psi_2$ and $\Psi$ in $\u^*(n, \F_q)$, the character formula in 
equation \eqref{6.0.1} gives the following expression for the 
interwining number between $\chi(\Psi_1)\otimes\chi(\Psi_2)$ and $\chi(\Psi)$:

\begin{proposition}\label{8.0}
\[
\langle\, \chi(\Psi_1)\otimes\chi(\Psi_2),\ \chi(\Psi)\, \rangle
=\frac{q^{i(\Psi_1)+i(\Psi_2)+i(\Psi)}}{q^{d(\Psi_1)+d(\Psi_2)+d(\Psi)}}\cdot
C(\Psi_1,\Psi_2,\Psi)\,,
\]
where $C(\Psi_1,\Psi_2,\Psi)$ is the size of the set 
\[
\left\{\, (\lambda_1,\lambda_2)\in\Psi_1\times\Psi_2
\ \mid\ \lambda_1+\lambda_2\in\Psi \,\right\}\,.
\]  
\end{proposition}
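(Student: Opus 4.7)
The plan is to expand everything into the Fourier basis and exploit its orthonormality. Concretely, I would start by writing each of the three cluster characters involved in the expression via the explicit character formula (6.0.1):
\[
\chi(\Psi_k) \;=\; q^{i(\Psi_k)-d(\Psi_k)}\sum_{\lambda\in\Psi_k} v(\lambda)\qquad (k=1,2),
\]
and similarly for $\chi(\Psi)$.

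The first real step is to handle the tensor product. Since $\chi(\Psi_1)\otimes\chi(\Psi_2)$ as a class function is the pointwise product $\chi(\Psi_1)\cdot\chi(\Psi_2)$, and equation (3.0.1) gives $v(\lambda_1)\cdot v(\lambda_2)=v(\lambda_1+\lambda_2)$, the tensor product immediately becomes
\[
\chi(\Psi_1)\otimes\chi(\Psi_2)\;=\;q^{\,i(\Psi_1)+i(\Psi_2)-d(\Psi_1)-d(\Psi_2)}
\sum_{(\lambda_1,\lambda_2)\in\Psi_1\times\Psi_2} v(\lambda_1+\lambda_2).
\]
This is the one nontrivial identity I rely on, and it is exactly what equation (3.0.1) provides.

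Next I would compute $\langle\chi(\Psi_1)\otimes\chi(\Psi_2),\,\chi(\Psi)\rangle$ by substituting the Fourier expansions of both sides and invoking orthonormality of $\{v(\mu)\}_{\mu\in\u^*(n,\F_q)}$, as asserted in Section \ref{module}. Pairing $v(\lambda_1+\lambda_2)$ against $v(\mu)$ for $\mu\in\Psi$ yields $\delta_{\lambda_1+\lambda_2,\mu}$, so the double sum over $(\lambda_1,\lambda_2,\mu)$ collapses to counting the set
\[
\bigl\{(\lambda_1,\lambda_2)\in\Psi_1\times\Psi_2 : \lambda_1+\lambda_2\in\Psi\bigr\},
\]
which by definition has size $C(\Psi_1,\Psi_2,\Psi)$.

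Finally I would collect the scalar prefactors. The $q$-exponent from $\chi(\Psi_1)\otimes\chi(\Psi_2)$ contributes $i(\Psi_1)+i(\Psi_2)-d(\Psi_1)-d(\Psi_2)$, while the expansion of $\chi(\Psi)$ contributes $i(\Psi)-d(\Psi)$. Adding these gives exactly the exponent $i(\Psi_1)+i(\Psi_2)+i(\Psi)-d(\Psi_1)-d(\Psi_2)-d(\Psi)$ claimed in the proposition. There is no real obstacle here; the only thing requiring care is the sign bookkeeping in the exponent and a quick check that the inner product on $\C[U(n,\F_q)]$ (with its conjugation) interacts correctly with the Fourier basis, since $\overline{v(\mu)}=v(-\mu)$ and $\theta$ is unitary, so the orthonormality really does produce the Kronecker delta we need.
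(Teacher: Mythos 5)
Your proof is correct and is exactly the argument the paper intends: the paper derives Proposition \ref{8.0} directly from the character formula \eqref{6.0.1} by expanding all three characters in the Fourier basis, using $v(\lambda_1)\cdot v(\lambda_2)=v(\lambda_1+\lambda_2)$, and collapsing via orthonormality. Your remark about the conjugation ($\overline{v(\mu)}=v(-\mu)$, harmless since $d$ and $i$ are invariant under negation) is the right point to check, and it goes through.
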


\begin{corollary}\label{8.1}
For coadjoint clusters $\Psi_1$ and $\Psi_2$ in $\u^*(n,\F_q)$, 
\[
\chi(\Psi_1)\otimes\chi(\Psi_2)=
\frac{q^{i(\Psi_1)+i(\Psi_2)}}{q^{d(\Psi_1)+d(\Psi_2)}}\cdot
\sum_{\Psi}q^{-d(\Psi)}\cdot
C(\Psi_1,\Psi_2,\Psi)\cdot\chi(\Psi)
\]
where the sum is over all coadjoint clusters in $\u^*(n,\F_q)$. 
\end{corollary}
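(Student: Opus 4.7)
The plan is to deduce this formula as a direct consequence of Proposition \ref{8.0} together with the orthogonality of cluster characters established earlier. The strategy is to expand $\chi(\Psi_1)\otimes\chi(\Psi_2)$ in the orthogonal basis of cluster characters and identify each coefficient by taking inner products.

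First I would verify that $\chi(\Psi_1)\otimes\chi(\Psi_2)$ is a cluster function. This is immediate: by Corollary \ref{3.6}, each $\chi(\Psi_k)$ is constant on every conjugacy cluster, so their pointwise product is as well. Corollary \ref{3.8} then guarantees a unique expansion
\[
\chi(\Psi_1)\otimes\chi(\Psi_2)\,=\,\sum_{\Psi} c_\Psi\cdot\chi(\Psi)
\]
with complex coefficients $c_\Psi$ indexed by the coadjoint clusters in $\u^*(n,\F_q)$.

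Next I would pin down $c_\Psi$ by orthogonality. Taking the inner product with $\chi(\Psi)$ and invoking Corollary \ref{3.7}, only the diagonal term survives, so
\[
c_\Psi\,=\,\frac{\langle\,\chi(\Psi_1)\otimes\chi(\Psi_2),\,\chi(\Psi)\,\rangle}{\langle\,\chi(\Psi),\,\chi(\Psi)\,\rangle}.
\]
The denominator equals $q^{i(\Psi)}$ by Theorem \ref{6.2}, and the numerator is computed in Proposition \ref{8.0}. Substituting these in yields
\[
c_\Psi\,=\,\frac{q^{i(\Psi_1)+i(\Psi_2)}}{q^{d(\Psi_1)+d(\Psi_2)+d(\Psi)}}\cdot C(\Psi_1,\Psi_2,\Psi),
\]
and pulling the $\Psi$-independent prefactor out of the sum gives exactly the claimed formula.

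There is no real obstacle at this stage, since the substantive content (the evaluation of the pairing with $\chi(\Psi)$ in terms of the counting quantity $C(\Psi_1,\Psi_2,\Psi)$) is already packaged in Proposition \ref{8.0}; this corollary is pure bookkeeping on top of it. The genuinely nontrivial fact, deferred to later in the section, is that the coefficient $c_\Psi$ is always a non-negative integer, but that integrality statement is not needed for the formula itself.
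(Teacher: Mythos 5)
Your argument is correct and is exactly the route the paper intends: the tensor product is a cluster function, hence expands in the orthogonal basis of cluster characters, and each coefficient is the pairing from Proposition \ref{8.0} divided by the self-intertwining number $q^{i(\Psi)}$ from Theorem \ref{6.2}. The arithmetic checks out and matches the stated formula.
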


Corollary \ref{8.1} says that the tensor product $\chi(\Psi_1)\otimes\chi(\Psi_2)$ is a linear combination of cluster characters
with non-negative rational coefficients. We will see in Theorem \ref{8.6} that the coefficients are in fact
integers.
In general it is a non-trivial task to determine the coefficients, with the tricky part being the computation of
$C(\Psi_1,\Psi_2,\Psi)$. The following is a very special case:

\begin{corollary}\label{8.7}
The multiplicity of $\chi(0)$ in $\chi(\Psi)\otimes\chi(\Psi')$ is zero unless $\Psi'=-\Psi$, in which case the multiplicity
is $q^{i(\Psi)}$.
\end{corollary}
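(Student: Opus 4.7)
The plan is to specialize Corollary \ref{8.1} to $\Psi_1=\Psi$, $\Psi_2=\Psi'$ and read off the coefficient of $\chi(0)$, which corresponds to the trivial cluster $\{0\}$ with $d(\{0\})=i(\{0\})=0$. Thus the multiplicity in question is
\[
\frac{q^{i(\Psi)+i(\Psi')}}{q^{d(\Psi)+d(\Psi')}}\cdot C(\Psi,\Psi',\{0\}),
\]
where $C(\Psi,\Psi',\{0\})$ counts pairs $(\lambda,\lambda')\in\Psi\times\Psi'$ with $\lambda+\lambda'=0$, i.e.\ $\lambda'=-\lambda$.

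First I would show that the multiplicity vanishes unless $\Psi'=-\Psi$. Since the left and right actions of $U(n,\F_q)$ on $\u^*(n,\F_q)$ are $\F_q$-linear, we have $g*(-\lambda)*g'=-(g*\lambda*g')$, so the set $-\Psi(\lambda)$ is itself a coadjoint cluster, namely $\Psi(-\lambda)$. Hence $C(\Psi,\Psi',\{0\})\neq 0$ forces the existence of some $\lambda\in\Psi$ with $-\lambda\in\Psi'$, which in turn forces $\Psi'=-\Psi$.

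Assume now $\Psi'=-\Psi$. Then every pair of the form $(\lambda,-\lambda)$ with $\lambda\in\Psi$ lies in the counted set, and clearly these are all of them, so $C(\Psi,-\Psi,\{0\})=|\Psi|$. Next I would observe that negation is compatible with the invariants: from $\L(-\lambda)=-\L(\lambda)=\L(\lambda)$ and $\R(-\lambda)=-\R(\lambda)=\R(\lambda)$ (the orbits are linear subspaces up to a translate that is swallowed by negation) we get $d(-\Psi)=d(\Psi)$ and $i(-\Psi)=i(\Psi)$. Substituting these equalities together with the size formula $|\Psi|=q^{2d(\Psi)-i(\Psi)}$ from Theorem \ref{6.2} into the displayed coefficient gives
\[
\frac{q^{2i(\Psi)}}{q^{2d(\Psi)}}\cdot q^{2d(\Psi)-i(\Psi)}=q^{i(\Psi)},
\]
which is exactly the claimed multiplicity.

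There is no real obstacle here; the only point that needs a line of justification is the linearity of the double action used both to identify $-\Psi$ as a cluster and to deduce $d(-\Psi)=d(\Psi)$, $i(-\Psi)=i(\Psi)$. Everything else is substitution into the already-established formulas.
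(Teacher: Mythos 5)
Your proof is correct and follows the route the paper intends: Corollary \ref{8.7} is presented as a special case of Corollary \ref{8.1}, and you carry out exactly that specialization, correctly identifying $-\Psi$ as a coadjoint cluster via linearity of the double action, computing $C(\Psi,-\Psi,\{0\})=|\Psi|=q^{2d(\Psi)-i(\Psi)}$, and verifying $d(-\Psi)=d(\Psi)$, $i(-\Psi)=i(\Psi)$. No gaps.
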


From Section \ref{primary} we know that each cluster character can be decomposed into a tensor product of
primary cluster characters. In order to understand tensor  
products of cluster characters in general, we shall first consider the case when both cluster characters involved
are primary.  

\begin{notation}
For $1\le i<j\le n$ and $a\in\F_q$ we use $\chi(i,j,a)$ as an alternative notation for $\chi(a\cdot\e_{ij})$.
Note that $\chi(i,j,0)$ is the trivial character for any $i$ and $j$.  
\end{notation}

The next four propositions give the decomposition of $\chi(i,j,a)\otimes\chi(i',j',b)$.
We assume $ab\not=0$. There are the following cases to consider:
\begin{itemize}
\item[(1)] $i\ne i'$, $j\ne j'$,
\item[(2)] $i\ne i'$, $j=j'$,
\item[(3)] $i=i'$, $j\ne j'$,
\item[(4)] $i=i'$, $j=j'$, $a+b\ne 0$,
\item[(5)] $i=i'$, $j=j'$, $a+b=0$.
\end{itemize}

Case (1) is directly covered by Corollary \ref{7.3}. The product is again a cluster character.

In case (2), the positions $(i,j)$ and $(i',j)$ are in the same column. It appears that 
the factor associated to the higher position dominates the product, the other factor dissolves
away. This case can be reduced to case (1):

\begin{proposition}\label{8.2}
If $ab\not=0$ and $i'>i$ then $\chi_(i,j,a)\otimes\chi(i',j,b)$ is equal to 
\[
\chi(i,j,a)\otimes
\left[\chi(0)+\sum_{i'<j'<j}\sum_{c\ne 0}\chi(i',j',c)\right]\,.
\]
\end{proposition}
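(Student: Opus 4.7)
My plan is to verify the identity by expanding both sides as linear combinations of Fourier basis elements $v(\eta)$ via the character formula \eqref{6.0.1} and matching coefficients. On the left, using the multiplicativity \eqref{3.0.1},
\[
\chi(i,j,a)\otimes\chi(i',j,b)=q^{-(j-i-1)-(j-i'-1)}\sum_{\lambda\in a\cdot\E_{ij},\ \mu\in b\cdot\E_{i'j}}v(\lambda+\mu).
\]
On the right, for each $j'\in(i',j)$ and $c\ne 0$ the product $\chi(i,j,a)\otimes\chi(i',j',c)$ equals $\chi(a\e_{ij}+c\e_{i'j'})$ by Corollary \ref{7.3} (the supports lie in distinct rows and columns since $i<i'$ and $j'<j$). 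The template $a\e_{ij}+c\e_{i'j'}$ has vanishing interwining index (an easy check from Proposition \ref{4.7}), so the character formula expands each such term to $q^{-(j-i-1)-(j'-i'-1)}\sum_{\nu\in a\cdot\E_{ij}+c\cdot\E_{i'j'}}v(\nu)$, with the cluster identified via Theorem \ref{4.4}; the term $\chi(i,j,a)\otimes\chi(0)=\chi(i,j,a)$ expands analogously to $q^{-(j-i-1)}\sum_{\nu\in a\cdot\E_{ij}}v(\nu)$.

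Parametrizing the primary clusters via the description from the proof of Lemma \ref{4.3}, I perform the invertible substitutions $\tilde\alpha_{i'}=\alpha_{i'}+b$, $\tilde\gamma_k=\alpha_k+\gamma_k$ for $i'<k<j$, and $\tilde\delta_l=\delta_l+a^{-1}\alpha_{i'}\beta_l$ for $i'<l<j$. Setting
\[
D(l):=\eta(e_{i'l})-a^{-1}\eta(e_{i'j})\eta(e_{il})\qquad\text{for }i'<l<j,
\]
a direct calculation (expanding $\eta=\lambda+\mu$ in the new variables) shows that each cross-term of $\eta$ satisfies
\[
\eta(e_{kl})-a^{-1}\eta(e_{kj})\eta(e_{il})=b^{-1}\gamma_k\,D(l)\qquad\text{for }i'<k<l<j.
\]
For each $\eta$, this equation either determines $\gamma_k$ (if some $l>k$ has $D(l)\ne 0$) or forces the compatibility $\eta(e_{kl})=a^{-1}\eta(e_{kj})\eta(e_{il})$ and leaves $\gamma_k$ free (otherwise). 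The valid $\eta$'s therefore partition according to $j':=\max\{l\in(i',j):D(l)\ne 0\}$, with $\eta\in a\cdot\E_{ij}$ if no such $l$ exists; each valid $\eta$ lies in a unique cluster $a\cdot\E_{ij}+c\cdot\E_{i'j'}$ on the right with $c=D(j')$, and the number of pre-images of $\eta$ in the left-hand sum is $q^{j-j'}$ (or $q^{j-i'-1}$ in the undefined case), which combined with the overall scaling $q^{-(j-i-1)-(j-i'-1)}$ reproduces exactly the right-hand coefficient.

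The main obstacle is the bookkeeping in this case analysis. One must verify that the clusters $a\cdot\E_{ij}+c\cdot\E_{i'j'}$ have precisely the deviation pattern ``$D(l)=0$ for $l>j'$, $D(j')=c$, $D(l)$ arbitrary for $l<j'$'' used to identify them, and that when several $D(l)$'s are non-zero the compatibility constraints on the common $\gamma_k$ (arising from different $l$'s giving the same value of $\gamma_k$) are automatically satisfied for $\eta$ inside the cluster. The underlying algebraic identities are elementary, but the combinatorial accounting of parameters and constraints across the substitutions requires care.
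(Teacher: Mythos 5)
The paper states Proposition \ref{8.2} without proof, so there is nothing to compare against line by line; judged on its own terms, your argument is correct and is the natural one within the paper's framework --- matching Fourier coefficients of both sides is exactly the computation of the structure constants $C(\Psi_1,\Psi_2,\Psi)$ of Proposition \ref{8.0} from the explicit parametrization of primary clusters in the proof of Lemma \ref{4.3}. I checked the pivotal identity: writing $\lambda$ with parameters $\alpha_k,\beta_l$ and $\mu$ with parameters $\gamma_k,\delta_l$, one gets $D(l)=\delta_l-a^{-1}b\,\beta_l$ and indeed
\[
\eta(e_{kl})-a^{-1}\eta(e_{kj})\eta(e_{il})\;=\;\gamma_k\bigl(b^{-1}\delta_l-a^{-1}\beta_l\bigr)\;=\;b^{-1}\gamma_k\,D(l)\qquad(i'<k<l<j),
\]
and the bookkeeping you flag does close up: for $\eta=\lambda'+\rho$ with $\rho\in c\cdot\E_{i'j'}$ one finds $D(l)=\rho(e_{i'l})$, so the deviation pattern is exactly ``zero above $j'$, equal to $c$ at $j'$, free below,'' the compatibility constraints for $k<j'$ all yield the same value $\gamma_k=bc^{-1}\gamma'_k$, and the fibre count $q^{j-j'}$ (from the free $\gamma_k$ with $j'\le k<j$) combines with the normalizations $q^{-(j-i-1)-(j-i'-1)}$ versus $q^{-(j-i-1)-(j'-i'-1)}$ to match, since $i(a\e_{ij}+c\e_{i'j'})=0$. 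A quick degree check ($1+\sum_{i'<j'<j}(q-1)q^{j'-i'-1}=q^{j-i'-1}$) confirms the totals. The only presentational caveat is that your write-up asserts rather than proves the two inclusions (image of the addition map equals the union of the listed clusters, in both directions); these do hold by reversing your substitutions, and a final version should record that verification explicitly.
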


Case (3) is similar to case (2). The two positions $(i,j)$ and $(i,j')$ are in the same row,
the factor associated to the position on the right dominates, the other factor dissolves away, and the
case can be reduced to case (1):

\begin{proposition}\label{8.3}
If $ab\ne 0$ and $j'<j$ then $\chi(i,j,a)\otimes\chi(i,j',b)$ is equal to
\[
\chi(i,j,a)\otimes
\left[\chi(0)+\sum_{i<i'<j'}\sum_{c\ne 0}\chi(i',j',c)\right]\,.
\]
\end{proposition}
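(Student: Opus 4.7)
The plan is to verify the identity by computing both sides in the Fourier basis $\{v(\mu)\}$ via the character formula \eqref{6.0.1}, then matching coefficients term by term.

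First I would apply Corollary \ref{7.3} to collapse the RHS: since $i<i'<j'$ and $j'<j$ place $(i,j)$ and $(i',j')$ in different rows and different columns, each summand $\chi(i,j,a)\otimes\chi(i',j',c)$ reduces to the single cluster character $\chi(a\e_{ij}+c\e_{i'j'})$. All resulting clusters have $i(\Psi)=0$ (by inspection of their supports via the L-hook recipe of Section \ref{sec4.3}), so \eqref{6.0.1} turns each into a uniformly weighted Fourier-basis average. The identity then reduces to showing, for every $\mu\in\u^*(n,\F_q)$, that
\[
q^{-(j-i-1)-(j'-i-1)}\cdot\#\{(\lambda_1,\lambda_2)\in a\cdot\E_{ij}\times b\cdot\E_{ij'}:\lambda_1+\lambda_2=\mu\}
\]
matches the RHS coefficient of $v(\mu)$.

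Next I would parametrize $\lambda_1\in a\cdot\E_{ij}$ by $(a_k,a'_l)_{i<k,l<j}$ and $\lambda_2\in b\cdot\E_{ij'}$ by $(b_k,b'_l)_{i<k,l<j'}$ as in the proof of Lemma \ref{4.3}, then compute $\mu=\lambda_1+\lambda_2$ entry by entry. The crucial observation is that, for each $i'\in(i,j')$, the ``deviation''
\[
\delta_{i'}:=\mu_{i'j'}-a^{-1}\mu_{i'j}\mu_{ij'}
\]
simplifies to $b_{i'}-a^{-1}a_{i'}b$. A parallel inspection of the parametrization of $a\cdot\E_{ij}+c\cdot\E_{i'j'}$ (via Theorem \ref{4.4}) shows that $\mu$ lies in this cluster precisely when $\delta_{i''}=0$ for $i<i''<i'$, $\delta_{i'}=c$, and the remaining $\delta_{i''}$ are unconstrained; while $\mu\in a\cdot\E_{ij}$ iff all $\delta_{i'}$ vanish. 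Hence every $\mu$ belongs to exactly one candidate cluster, determined by the smallest $i_0\in(i,j')$ with $\delta_{i_0}\ne 0$ (if any) and the value $c=\delta_{i_0}$.

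Finally, counting the pairs $(\lambda_1,\lambda_2)$ producing a given $\mu$ reduces to counting free parameters modulo the $\delta$-constraints: $q^{j'-i-1}$ pairs per $\mu\in a\cdot\E_{ij}$ (all $b_{i'}$ forced) and $q^{i'-i}$ pairs per $\mu\in\Psi(a\e_{ij}+c\e_{i'j'})$ (only $b_{i''}$ for $i''\le i'$ forced). Multiplying by the prefactor $q^{-(j-i-1)-(j'-i-1)}$ reproduces the predicted RHS coefficients $q^{-(j-i-1)}$ and $q^{-(j-i-1)-(j'-i'-1)}$ exactly. The main obstacle is cleanly verifying the cluster-membership dichotomy in terms of the $\delta_{i'}$: one must show that the smallest nonvanishing $\delta_{i_0}$ uniquely determines the cluster, independent of the downstream $\delta_{i''}$ for $i''>i_0$, which requires a careful side-by-side comparison of the parametrizations of $\lambda_1+\lambda_2$ and of $a\cdot\E_{ij}+c\cdot\E_{i_0 j'}$. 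Once this dichotomy is pinned down, the coefficient matching is automatic.
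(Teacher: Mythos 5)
The paper states Propositions \ref{8.2}--\ref{8.5} without proof, so there is no official argument to measure yours against; judged on its own terms, your outline is sound and every numerical claim in it checks out. Expanding the right-hand side via Corollary \ref{7.3} into $\chi(a\e_{ij})+\sum_{i<i'<j'}\sum_{c\ne 0}\chi(a\e_{ij}+c\e_{i'j'})$, verifying $i(\Psi)=0$ for each of these clusters, and matching Fourier coefficients is the right reduction; your fiber counts ($q^{j'-i-1}$ over points of $a\cdot\E_{ij}$ and $q^{i'-i}$ over points of $\Psi(a\e_{ij}+c\e_{i'j'})$) do combine with the prefactor $q^{-(j-i-1)-(j'-i-1)}$ to give exactly $q^{-d(\Psi)}$ in each case, and the total degrees balance.

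Two caveats on the step you yourself flag as the obstacle. First, the $\delta$-dichotomy as you state it does not describe the clusters themselves: for $n=5$, $i=1$, $j'=4$, $j=5$, the locus where $\delta_2=0$ and $\delta_3=c\ne0$ contains not only $\Psi(a\e_{15}+c\e_{34})$ but also the clusters $\Psi(a\e_{15}+c\e_{34}+c'\e_{23})$ with $c'\ne 0$; what rescues the argument is that on the image $\Psi_1+\Psi_2$ the additional invariants vanish automatically whenever the relevant $\delta$'s do (in this example the invariant $\mu(e_{23})-a^{-1}\mu(e_{25})\mu(e_{13})$ equals $b^{-1}b_3'\,\delta_2$ on the image), and establishing this in general is the real content of your ``side-by-side comparison.'' Second, that entire difficulty can be bypassed: the fiber count $\mu\mapsto\#\{(\lambda_1,\lambda_2)\in\Psi_1\times\Psi_2:\ \lambda_1+\lambda_2=\mu\}$ is constant on each coadjoint cluster, because the double action permutes the fibers (the same observation used in the proof of Theorem \ref{7.1}). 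It therefore suffices to compute the count at the templates $a\e_{ij}$ and $a\e_{ij}+c\e_{i'j'}$ themselves --- a short direct computation with the parametrization from Lemma \ref{4.3}, yielding your values $q^{j'-i-1}$ and $q^{i'-i}$ --- and then to check the identity $q^{2(j-i-1)}\bigl[q^{j'-i-1}+\sum_{i<i'<j'}(q-1)\,q^{2(j'-i'-1)}\,q^{i'-i}\bigr]=|\Psi_1|\cdot|\Psi_2|$, which forces these clusters to exhaust $\Psi_1+\Psi_2$ and makes the membership dichotomy unnecessary. With either repair your proposal yields a complete proof.
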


In cases (4) and (5), the two positions coincide, and the factors play symmetric roles.
Case (4) can be reduced to case (2) or case (3), while case (5) can be directly reduced to case (1).

\begin{proposition}\label{8.4}
If $ab\ne 0$ and $a+b\ne 0$ then $\chi(i,j,a)\otimes\chi(i,j,b)$ is equal to
\[
\chi(i,j,a+b)\otimes
\left[\chi(0)+\sum_{i<i'<j}\,\sum_{c\ne 0}\chi(i',j,c)\right]\,,
\]
this is also equal to
\[
\chi(i,j,a+b)\otimes
\left[\chi(0)+\sum_{i<j'<j}\,\sum_{c\ne 0}\chi(i,j',c)\right]\,.
\]
\end{proposition}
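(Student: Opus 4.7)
My strategy is to expand both sides in the Fourier basis, perform a linear change of variables on the summation parameters that separates off the factor $\chi(i,j,a+b)$, and identify the remaining residual with the bracketed factor by reducing to Proposition~\ref{8.2} (and, for the second formulation, to Proposition~\ref{8.3}).

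First I would write
\[
\chi(i,j,a)\otimes\chi(i,j,b) \;=\; q^{-2(j-i-1)}\sum_{\lambda\in a\E_{ij}}\sum_{\mu\in b\E_{ij}} v(\lambda+\mu),
\]
using the character formula~\eqref{6.0.1} and multiplicativity~\eqref{3.0.1}. Parameterize $\lambda$ and $\mu$ by tuples $(A_k,A'_l)$ and $(B_k,B'_l)$ via the explicit description of primary-cluster elements from the proof of Lemma~\ref{4.3}, and introduce the substitution $C_k=A_k+B_k$, $D_k=bA_k-aB_k$ (analogously for the primed indices). Since $a+b\ne 0$, this is an $\F_q$-linear bijection on each coordinate pair. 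A short computation based on the identity
\[
a^{-1}AA' + b^{-1}BB' \;=\; (a+b)^{-1}CC' + \bigl(ab(a+b)\bigr)^{-1}DD'
\]
(with $C=A+B$, $D=bA-aB$, and likewise for primes, equivalent to expanding $(bA-aB)(bA'-aB')$) shows $\lambda+\mu = \nu_1(C,C') + \nu_2(D,D')$, where $\nu_1$ is the standard-form element of $(a+b)\E_{ij}$ parameterized by $(C,C')$ and $\nu_2 = \bigl(ab(a+b)\bigr)^{-1}\sum_{i<k<l<j} D_kD'_l\,\e_{kl}$. By~\eqref{3.0.1} the double sum factors as
\[
\Bigl[\sum_{C,C'} v(\nu_1)\Bigr]\cdot\Bigl[\sum_{D,D'} v(\nu_2)\Bigr];
\]
the first bracket is $q^{j-i-1}\chi(i,j,a+b)$ by the character formula, reducing the identity to
\[
q^{-(j-i-1)}\chi(i,j,a+b)\cdot\!\sum_{D,D'} v(\nu_2) \;=\; \chi(i,j,a+b)\otimes\Bigl[\chi(0)+\!\!\sum_{i<i'<j}\sum_{c\ne 0}\chi(i',j,c)\Bigr].
\]

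To finish, I would rescale $D'_l\mapsto ab(a+b)\,D'_l$ to clear constants and invoke Proposition~\ref{8.2}: that result describes each $\chi(i,j,a+b)\otimes\chi(i',j,c)$ (for $i<i'<j$ and $c\ne 0$) in closed form, shows that it depends only on $(i,j,a+b,i')$, and has a Fourier expansion that matches the restricted $(D,D')$-sum in which only the coordinates at a single index $i'$ are allowed to be nonzero; summing over all admissible $(i',c)$ plus the all-zero contribution $\chi(i,j,a+b)\otimes\chi(0)$ reconstructs the full residual. The second formulation of the Proposition follows by performing the analogous reduction with the roles of $D_k$ and $D'_l$ reversed, which by the left-right symmetry of the double action of $U(n,\F_q)$ corresponds to case~(3) via Proposition~\ref{8.3} in place of case~(2). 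The main obstacle is precisely this last identification, since $\sum_{D,D'} v(\nu_2)$ is not itself a cluster character; the cleanest way to close the argument is to verify the identity pointwise on the support of $\chi(i,j,a+b)$, where $\sum_{D,D'}v(\nu_2)(g)$ reduces to $q^{2(j-i-1)-\rank M(g)}$ with $M(g)$ the strictly upper triangular matrix of entries $(g-I)_{kl}$ for $i<k<l<j$, and then to check (by induction on $j-i$, using Proposition~\ref{8.2}) that the bracketed cluster-character sum evaluates to the same rank-dependent function on that support.
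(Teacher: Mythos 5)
The paper states Propositions \ref{8.2}--\ref{8.5} without proof, so there is no in-paper argument to compare against; judged on its own terms, your strategy is sound and its core computation is genuinely correct. The parameterization of $a\E_{ij}$ and $b\E_{ij}$ from the proof of Lemma~\ref{4.3}, the change of variables $C=A+B$, $D=bA-aB$ (invertible precisely because $a+b\ne 0$), and the identity $a^{-1}AA'+b^{-1}BB'=(a+b)^{-1}CC'+\bigl(ab(a+b)\bigr)^{-1}DD'$ all check out, and they correctly split the product into $\chi(i,j,a+b)$ times the residual $q^{-(j-i-1)}\sum_{D,D'}v(\nu_2)$, whose value at $g$ is indeed $q^{(j-i-1)-\rank M(g)}$ with $M(g)$ the strictly upper triangular block of $g-I$ between rows and columns $i$ and $j$.

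The gap is the last step, which you yourself flag as the ``main obstacle'' and do not carry out. Worse, the intermediate claim you first try --- that each $\chi(i,j,a+b)\otimes\chi(i',j,c)$ has a Fourier expansion ``matching the restricted $(D,D')$-sum in which only the coordinates at a single index $i'$ are allowed to be nonzero'' --- fails as stated: with only $D_{i'}$ free, $\nu_2$ is supported in row $i'$ alone, and these elements bear no term-by-term relation to the cluster $c\,\E_{i'j}$, whose elements also involve $\e_{kj}$ and $\e_{kl}$. Your fallback, verifying pointwise on the support of $\chi(i,j,a+b)$ that the bracket equals $q^{(j-i-1)-\rank M(g)}$, is the right move and the identity is true, but it still has to be proved, and Proposition~\ref{8.2} is not the natural tool (it decomposes products, it does not give values). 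What is needed is the evaluation, on a template $g=I+X$ in the support, of $1+\sum_{i<i'<j}\sum_{c\ne 0}\chi(i',j,c)(g)$ via equation~\eqref{3.0.3} or the closed formula of the Appendix: the rows $i'$ contributing nonzero terms are exactly the $m-\rank M$ rows of the block not meeting the support of $X$ (where $m=j-i-1$), and their exponents run through $0,1,\dots,m-\rank M-1$, so the sum telescopes to $q^{m-\rank M}$ by $1+(q-1)(1+q+\cdots+q^{m-r-1})=q^{m-r}$. Until that combinatorial count is supplied the argument is incomplete; the same remark applies to the second formulation, which follows from the transposed count.
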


\begin{proposition}\label{8.5}
If $a\ne 0$ then $\chi(i,j,a)\otimes\chi(i,j,-a)$ is equal to
\[
\left[\chi(0)+\sum_{i<i'<j}\sum_{b\ne 0}\chi(i',j,b)\right]\otimes
\left[\chi(0)+\sum_{i<j'<j}\sum_{c\ne 0}\chi(i,j',c)\right]\,.
\]
\end{proposition}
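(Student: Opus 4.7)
Both sides of the identity are cluster functions on $U(n,\F_q)$ (by Corollary \ref{3.6} together with closure of cluster functions under products), so it suffices to verify equality as complex-valued functions on $U(n,\F_q)$. The plan is to reduce both sides to the same explicit expression via the character formula \eqref{6.0.1}.

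For the left-hand side, parameterize $\lambda_1\in a\E_{ij}$ and $\lambda_2\in -a\E_{ij}$ through the explicit description in Lemma \ref{4.3}, with free parameters $(u_k,v_l)$ and $(u'_k,v'_l)$ in $\F_q^{j-i-1}\times\F_q^{j-i-1}$. Change variables to $c_k=u_k+u'_k$ and $c'_l=v_l+v'_l$, keeping $u_k,v_l$ free: the $\e_{ij}$-components of $\lambda_1$ and $\lambda_2$ cancel, and the algebraic identity $u_kv_l-u'_kv'_l=c_kv_l+u_kc'_l-c_kc'_l$ makes the coefficient of $\e_{kl}$ in $\lambda_1+\lambda_2$ equal $a^{-1}(c_kv_l+u_kc'_l-c_kc'_l)$. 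After summing out the free $u_k,v_l$ using $\sum_{x\in\F_q}\theta(xy)=q\cdot\delta_{y,0}$, one obtains decoupled $g$-dependent linear constraints $D_l(c,g):=\sum_{k<l}c_kg_{kl}=0$ for each $l\in(i,j)$ and $E_k(c',g):=\sum_{l>k}c'_lg_{kl}=0$ for each $k\in(i,j)$. The crucial observation is that on this joint constraint set the residual quadratic twist $-a^{-1}\sum_{k<l}c_kc'_lg_{kl}=-a^{-1}\sum_l c'_l D_l(c,g)$ also vanishes, so all $a$-dependence drops out and the left-hand side factors as $F(g)\cdot G(g)$, where $F(g)=\sum_{c\in C_1(g)}\theta(\sum_k c_kg_{kj})$ and $G(g)=\sum_{c'\in C_2(g)}\theta(\sum_l c'_lg_{il})$ are sums over the respective constraint sets.

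For the right-hand side, expand pointwise as $(A\otimes B)(g)=A(g)B(g)$. Each cross term $\chi(i',j,b)\otimes\chi(i,j',c)$ in the distribution collapses to the single cluster character $\chi(b\e_{i'j}+c\e_{ij'})$ by Corollary \ref{7.3}, since the supports lie in distinct rows and columns. Apply the character formula to each $\chi(i',j,b)$ in $A(g)$; after the bijective rescaling $(u_k,v_l)\mapsto(bu_k,bv_l)$ (valid for $b\ne 0$) the exponent becomes $b$ times an explicit expression $F_{i'}(u,v,g)$, so summing $b\in\F_q^*$ via $\sum_{b\ne 0}\theta(bF_{i'})=q\cdot\delta_{F_{i'},0}-1$ and then aggregating over $i'\in(i,j)$ identifies $A(g)$ with $F(g)$. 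The symmetric computation gives $B(g)=G(g)$, completing the proof.

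The main obstacle will be this final matching step $A(g)=F(g)$: the left-hand side naturally produces a single constrained Fourier sum in $c$, while $A(g)$ is a priori a sum of cluster characters indexed by $(i',b)$, and the aggregation must conspire exactly. A clean route is induction on $j-i$, peeling off the $i'=j-1$ contribution (where $b\E_{j-1,j}=\{b\e_{j-1,j}\}$ is a single point contributing $q\cdot\delta_{g_{j-1,j},0}-1$ after the $b$-sum) and using the product structure of the primary clusters from Lemma \ref{4.3} to recurse layer by layer, matching the orthogonality constraints that emerge from the left-hand side calculation.
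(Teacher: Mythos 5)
The paper states Proposition \ref{8.5} without proof, so there is nothing to compare against; judging your argument on its own terms, the strategy (pointwise verification via the character formula) is the right one, and your left-hand-side computation is correct and is the substantive half of the proof. The change of variables $c_k=u_k+u'_k$, $c'_l=v_l+v'_l$, the identity $u_kv_l-u'_kv'_l=c_kv_l+u_kc'_l-c_kc'_l$, the decoupled summation over the free $u_k,v_l$ producing the linear constraints $D_l(c,g)=0$ and $E_k(c',g)=0$, and the vanishing of the residual quadratic twist $-a^{-1}\sum_l c'_l D_l(c,g)$ on the constraint set all check out, and the normalizations cancel exactly (the prefactor $q^{-2(j-i-1)}$ from equation \eqref{6.0.1} against the $q^{2(j-i-1)}$ produced by the free sums), giving the factorization of the left side as $F(g)\cdot G(g)$.

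The genuine gap is the step you yourself flag: the identification $A(g)=F(g)$. The route you propose --- rescale by $b$, sum over $b\in\F_q^*$ to get $q\cdot\delta_{F_{i'},0}-1$, then ``aggregate over $i'$'' --- replaces each $\sum_{b\ne 0}\chi(i',j,b)(g)$ by a point count on a quadric hypersurface, and there is no visible mechanism by which these counts reassemble into the single constrained Fourier sum $F(g)$; the induction sketch in your last paragraph is not carried out and does not obviously close this. The fix is to \emph{not} sum over $b$. For fixed $i'$ and $b\ne 0$, write the elements of $b\cdot\E_{i'j}$ as in Lemma \ref{4.3} with parameters $s_k$ (coefficients of $\e_{kj}$) and $t_l$ (coefficients of $\e_{i'l}$), and sum out only the $t_l$. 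This yields
\[
\chi(i',j,b)(g)=\sum_{s}\Bigl[\prod_{i'<l<j}\delta\Bigl(bg_{i'l}+\textstyle\sum_{i'<k<l}s_kg_{kl}=0\Bigr)\Bigr]\cdot\theta\Bigl[bg_{i'j}+\textstyle\sum_{i'<k<j}s_kg_{kj}\Bigr],
\]
which is precisely the partial sum of $F(g)$ over those $c$ whose first non-zero coordinate is $c_{i'}=b$ (the constraints $D_l(c,g)=0$ for $l\le i'$ being vacuous for such $c$), while the term $c=0$ contributes $1=\chi(0)(g)$. Partitioning the $c$-sum defining $F(g)$ by the position and value of the first non-zero coordinate therefore gives $F(g)=A(g)$ exactly, and the symmetric argument gives $G(g)=B(g)$. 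With this replacement your proof is complete; the observation about Corollary \ref{7.3} collapsing the cross terms is true but not needed once you verify the identity pointwise.
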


\begin{theorem}\label{8.6}
The tensor product of any finite number of cluster characters is equal to a linear combination
of cluster characters with non-negative integer multiplicities.
\end{theorem}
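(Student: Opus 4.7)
By Theorem \ref{7.1} every cluster character is a tensor product of primary cluster characters, so any tensor product of cluster characters can be rewritten as a tensor product of primaries $\chi(i_1, j_1, a_1) \otimes \cdots \otimes \chi(i_N, j_N, a_N)$. It therefore suffices to show that every such product is a non-negative integer combination of cluster characters. I will prove this by strong induction on the multiset $M = \{(i_k, j_k) : 1 \le k \le N\}$ of positions of the factors, using the Dershowitz--Manna extension of the well-order on positions above the diagonal defined by $(i,j) > (i',j')$ iff $j > j'$, or $j = j'$ and $i < i'$; this extension is well-founded since the underlying order is a finite linear order.

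\textbf{The induction.} If no two positions in $M$ share a row or column, then $\tau = \sum_k a_k \cdot \e_{i_k j_k}$ is a coadjoint template in $\T^*(n, \F_q)$, and iterating Corollary \ref{7.3} identifies the tensor product with the single cluster character $\chi(\tau)$, establishing the base case. Otherwise some pair of factors $\chi(i_k, j_k, a_k) \otimes \chi(i_l, j_l, a_l)$ has either coincident positions or positions sharing a row or column; pick such a pair, with $(i_k, j_k) \ge (i_l, j_l)$ in the position order, and rewrite it using whichever of Propositions \ref{8.2}--\ref{8.5} applies, then distribute over the remaining factors. The result is a finite non-negative integer combination of tensor products of primaries. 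Each summand has a multiset of positions obtained from $M$ either by removing one copy of $(i_l, j_l)$ and possibly inserting one new position strictly less than $(i_l, j_l)$, or (only in the case of Proposition \ref{8.5}) by removing both copies of the coincident position $(i_k, j_k) = (i_l, j_l)$ and inserting at most two new positions, each strictly less. In every case the new multiset is strictly smaller than $M$ in the Dershowitz--Manna order, so the inductive hypothesis supplies a non-negative integer expansion of each summand, and hence of the original product.

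\textbf{Main obstacle.} The essential technical point is the case-by-case verification that every summand produced by Propositions \ref{8.2}--\ref{8.5} has a strictly smaller multiset of positions. The chosen well-order is tailored so that these verifications are immediate: Propositions \ref{8.2} and \ref{8.3} move the smaller of two row- or column-sharing positions either out of the multiset entirely or to a strictly smaller column or row; Proposition \ref{8.4} reduces the multiplicity of a doubled position or trades a copy for a strictly smaller position; and the summands in Proposition \ref{8.5} are supported at positions strictly below the doubled position. With these checks the induction closes and the theorem follows.
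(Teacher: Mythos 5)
Your proof is correct and follows the same overall strategy as the paper's: reduce to a tensor product of primary cluster characters via Theorem \ref{7.1}, settle the disjoint-support case with Corollary \ref{7.3}, and otherwise rewrite a row- or column-sharing pair using Propositions \ref{8.2}--\ref{8.5} and recurse. The one genuine difference is the termination argument. The paper inducts on the total degree of the product; since a primary factor supported on the second diagonal has degree $1$, applying Propositions \ref{8.2}--\ref{8.5} to such a factor does not lower the total degree, so the paper must first absorb all second-diagonal factors by the special collapsing identities ($\chi(i,i+1,a)\otimes\chi(i,j+1,b)=\chi(i,j+1,b)$, etc.) before the degree strictly decreases. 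Your Dershowitz--Manna multiset order on positions handles all cases uniformly --- in each proposition the dissolved position is replaced by strictly smaller positions or by nothing, including when it lies on the second diagonal --- so no special preprocessing is needed. The cost is invoking well-founded multiset orderings where the paper uses a single integer invariant; either measure closes the induction. One small point worth making explicit: factors $\chi(i,j,0)$ are trivial rather than primary and should be discarded before applying Propositions \ref{8.2}--\ref{8.5}, which assume $ab\ne 0$; this is harmless since the trivial character is the identity for the tensor product.
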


\begin{proof}
We use induction on the total degree of the tensor product. By Theorem 
\ref{7.1} we can write the product in the form $\chi(\tau_1)\otimes\chi(\tau_2)\otimes\cdots$,
where each factor is associated to a primary coadjoint template. 
if
all the templates involved are supported in different rows and different columns then
Corollary \ref{7.3} says the product is equal to the cluster character associated to $\tau_1+\tau_2+\cdots$.
If any two of the templates
are supported in the same row or column, with one (or both) supported
on the second diagonal, then the corresponding characters can be combined by 
applying the following special cases of Propositions \ref{8.2}-\ref{8.5}:
If $i<j$ then $\chi(i,i+1,a)\otimes\chi(i,j+1,b)=\chi(i,j+1, b)$ 
and $\chi(j,j+1,a)\otimes\chi(i,j+1,b)=\chi(i,j+1, b)$; moreover
$\chi(i,i+1,a)\otimes\chi(i,i+1,b)=\chi(i,i+1, a+b)$. After this, 
if we can still find two of the templates with supports in the same row or column, but both
above the second diagonal, then we can decompose the product of the corresponding characters
according to 
Propositions \ref{8.2}-\ref{8.5}. This will break the original product into pieces, each with a smaller
degree, and the induction works.
\end{proof}  

\begin{remark}
Theorem \ref{8.6} says that the collection of all $\Z$-linear combinations of   
cluster characters form a subring in the representation ring of $U(n,\F_q)$.
This subring has a rather pleasant structure: it is generated by the primary
cluster characters, with the relations given in Propositions \ref{8.2}--\ref{8.5}.
\end{remark}

\section{The discrete series character}\label{discrete}

Let $\Delta(n,\K)$ be the collection of elements $\lambda$ in $\u^*(n,\K)$
such that $\supp(\lambda)$ contains at least one position in each of the first
$n-1$ rows. It is easy to check that $\Delta(n,\K)$ is 
closed under the left action of $U(n,\K)$ on $\u^*(n,\K)$.
Thus by Proposition \ref{3.1} 
the subspace in ${\mathbb C}[U(n,\F_q)]$ spanned by the set
\[
\left\{\, v(\lambda):\quad \lambda\in\Delta(n,\F_q)\,\right\}
\]
is a submodule of the regular representation of $U(n,\F_q)$. We denote this submodule
by $D(n,\F_q)$, and let $\delta(n,\F_q)$ be the trace of $U(n,\F_q)$ on $D(n,\F_q)$.
The cluster module $V(\lambda)$ is contained in  $D(n,\F_q)$ for every 
$\lambda\in\Delta(n,\F_q)$, and $D(n,\F_q)$ is a direct sum of cluster modules. The character
$\delta(n,\F_q)$ is therefore a sum of cluster characters.

Theorem \ref{9.1} will show that $\delta(n,\F_q)$ is equal to the 
character of the
restriction to $U(n,\F_q)$ of any irreducible 
discrete series representation of $GL(n,\F_q)$ (see \cite[Section 3]{Lehrer74}). 
We therefore refer to $\delta(n,\F_q)$ as the {\it discrete series character} of $U(n,\F_q)$.

\begin{proposition}\label{9.4}
Suppose $\lambda\in\u^*(n,\K)$ and $X\in\T(n,\K)$, and write $g=I+X$. Then
$g*\lambda=\lambda$ if and only if 
$\supp(\lambda)$ contains no position above any non-zero entry of $X$. 
\end{proposition}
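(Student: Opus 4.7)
The plan is to turn the fixed-point condition into a linear condition on $\lambda$ and then read off the conclusion coefficient by coefficient, using the template hypothesis. By definition of the left action $(g*\lambda)(Y)=\lambda(Yg)$, and since $YX\in\u(n,\K)$ whenever $Y,X\in\u(n,\K)$, writing $g=I+X$ turns $g*\lambda=\lambda$ into
\[
\lambda(YX)\;=\;0\qquad\text{for every }Y\in\u(n,\K).
\]

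I would then expand in the standard basis. With $Y=\sum y_{pq}\,e_{pq}$ and $X=\sum x_{ij}\,e_{ij}$, the relation $e_{pq}e_{ij}=\delta_{qi}\,e_{pj}$ gives
\[
\lambda(YX)\;=\;\sum_{p,j}\Bigl(\sum_q y_{pq}\,x_{qj}\Bigr)\lambda(e_{pj}).
\]
The template hypothesis enters twice. Since each column of $X$ has at most one non-zero entry, for every column index $j$ in which $X$ has a non-zero entry there is a unique row $i_j$ with $x_{i_j,j}\neq 0$, and the inner sum collapses to $y_{p,i_j}\,x_{i_j,j}$. Since each row of $X$ also has at most one non-zero entry, the assignment $j\mapsto i_j$ is injective; hence as $j$ ranges over the non-zero columns of $X$ and $p$ over the indices $p<i_j$, the entries $y_{p,i_j}$ are pairwise distinct matrix coordinates of $Y$, and so they are independent linear functionals on $\u(n,\K)$.

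Consequently the vanishing of the linear form $Y\mapsto\lambda(YX)$ is equivalent to the vanishing of each coefficient $x_{i_j,j}\,\lambda(e_{p,j})$ for $p<i_j$, and since $x_{i_j,j}\neq 0$, this reduces to $\lambda(e_{p,j})=0$ for every $p<i_j$ and every non-zero entry $(i_j,j)$ of $X$. That is exactly the assertion that $\supp(\lambda)$ contains no position above any non-zero entry of $X$. I do not expect a serious obstacle; the one point that needs care is confirming that the template property really decouples the linear constraints so that the criterion can be read off term by term.
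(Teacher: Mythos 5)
Your proof is correct. The paper states Proposition \ref{9.4} without giving a proof, and your argument is the natural one and is complete: reducing $g*\lambda=\lambda$ to $\lambda(YX)=0$ for all $Y$, and then using both halves of the template hypothesis (columns of $X$ to collapse the inner sum, rows of $X$ to make $j\mapsto i_j$ injective so that the coordinates $y_{p,i_j}$ are genuinely independent and the constraints decouple) is exactly the point that needs care, and you handle it.
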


\begin{theorem}\label{9.1}
The discrete series character $\delta=\delta(n,\F_q)$ has value 
\[
\delta(g)=(-1)^{r(g)}\cdot (q-1)\cdot (q^2-1)\cdots (q^{n-1-r(g)}-1)
\]
at $g\in U(n,\F_q)$, where $r(g)$ is the rank of $g-I$.
\end{theorem}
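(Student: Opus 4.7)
The plan is to reduce to the case where $X = g - I$ is an adjoint template, expand $\delta(g)$ via Proposition \ref{3.1}, apply Proposition \ref{9.4} to describe the fixed-point set, factor the resulting sum row by row, and finish with a combinatorial identity for the sizes of the allowed position sets.

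Since $D(n, \F_q)$ decomposes as a direct sum of cluster modules, $\delta$ is a sum of cluster characters and hence a cluster function by Corollary \ref{3.6}; the rank $r(g) = \rank(g - I)$ is also invariant on conjugacy clusters, being preserved by any pair of invertible left and right multiplications. By Theorem \ref{4.1} the adjoint cluster of $g - I$ contains a unique adjoint template, so I may assume $X = g - I \in \T(n, \F_q)$. Write the nonzero entries of $X$ as $\alpha_1, \ldots, \alpha_r \in \F_q \setminus \{0\}$ at positions $(i_1, j_1), \ldots, (i_r, j_r)$, and set $R = \{i_1, \ldots, i_r\}$; note $r = \rank(X) = r(g)$. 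By Proposition \ref{3.1},
\[
\delta(g) = \sum_{\lambda \in \Delta(n, \F_q),\ g * \lambda = \lambda} \theta[\lambda(X)],
\]
and Proposition \ref{9.4} identifies the fixed-point condition with $\supp(\lambda) \subset A$, where $A = \{(k, j) : 1 \le k < j \le n\} \setminus \{(k, j_s) : k < i_s\}$ consists of all positions not directly above a nonzero entry of $X$. Write $A_k$ for the allowed positions in row $k$ and set $a_k = |A_k|$.

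Because $\lambda(X) = \sum_s \alpha_s \lambda(e_{i_s, j_s})$ decouples across rows, and both the support-in-$A$ condition and the $\Delta$-requirement (that $\supp(\lambda)$ meets each row $k \le n-1$) are row-by-row, the sum factors as $\delta(g) = \prod_{k=1}^{n-1} T_k$, where $T_k$ is the row-$k$ contribution. For $k = i_s \in R$, the row contributes $\alpha_s \lambda(e_{k, j_s})$ to the exponent; since $\alpha_s \ne 0$, orthogonality of $\theta$ gives $\sum_\mu \theta(\alpha_s \mu) = 0$, and the nonzero-configuration constraint collapses $T_k$ to $-1$. For $k \notin R$ with $k \le n-1$ the row contributes nothing to $\lambda(X)$, so $T_k$ simply counts the $q^{a_k} - 1$ nonzero configurations in $A_k$. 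Thus
\[
\delta(g) = (-1)^r \prod_{k \in \{1, \ldots, n-1\} \setminus R} (q^{a_k} - 1).
\]

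The main obstacle is the final combinatorial identity: showing that the multiset $\{a_k : k \in \{1, \ldots, n-1\} \setminus R\}$ is precisely $\{1, 2, \ldots, n-1-r\}$. From $a_k = (n-k) - |\{s : i_s > k\}|$ one reads off $a_{k+1} - a_k = -1$ if $k+1 \notin R$ and $0$ if $k+1 \in R$. A short case check on whether $1 \in R$ shows that at the smallest row $k_1 \notin R$ one has $a_{k_1} = n - 1 - r$. Listing the rows outside $R$ as $k_1 < k_2 < \cdots < k_{n-1-r}$, induction on $t$ then yields $a_{k_t} = n - r - t$, so the values are exactly $\{1, 2, \ldots, n-1-r\}$. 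Substituting into the product gives $\delta(g) = (-1)^r (q-1)(q^2-1) \cdots (q^{n-1-r} - 1)$, which is the claim.
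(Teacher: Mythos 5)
Your proof is correct and follows essentially the same route as the paper: reduce to an adjoint template $X=g-I$ using the cluster-function property, expand $\delta(g)$ over the fixed points described by Proposition \ref{9.4}, and factor the sum row by row, with each non-zero row of $X$ contributing $-1$ by orthogonality of $\theta$ and each zero row $k\le n-1$ contributing $q^{a_k}-1$. Your explicit verification that the exponents $a_k$ over the rows outside $R$ form exactly the set $\{1,\dots,n-1-r\}$ spells out a combinatorial step that the paper leaves implicit in "combine \eqref{9.1.1}--\eqref{9.1.3}."
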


\begin{proof}
Write $g=I+X$.
Since $\delta$ is a sum of cluster characters, it must be a cluster function, therefore
(since the right-hand side of the formula is a cluster function in the first place)
we can assume $X\in\T(n,\F_q)$. By Proposition \ref{3.1}
\[
\delta(g)=\sum_{g*\lambda=\lambda}v(\lambda)(g)\,,
\]
where the sum is over all $\lambda\in\Delta(n,\F_q)$ that are fixed under
the left action by $g$.

For $1\le k\le n-1$ let $\Lambda_k(X)$ be the collection of elements $\lambda$ in
$\u^*(n,\F_q)$ such that $\supp(\lambda)$ contains at least one position in the $k$-th row,
and no position in any other row or above any non-zero entry of $X$.
By Proposition \ref{9.4} 
an element $\lambda$ in $\Delta(n,\F_q)$ that is fixed under
the left action by $g$ can be written (uniquely)
as a sum
\[
\lambda=\sum_{k}\lambda_k
\]
where $\lambda_k\in\Lambda_k(X)$. We can therefore write 
\begin{equation}\label{9.1.1}
\delta(g)=\prod_{k}\left[\sum_{\lambda\in\Lambda_k(X)}v(\lambda)(g)\right]
=\prod_{k}\left[\sum_{\lambda\in\Lambda_k(X)}\theta[\lambda(X)]\right].
\end{equation}
Since $X\in\T(n,\F_q)$ each row of $X$ contains at most one
non-zero entry. By a simple computation we find
\begin{equation}\label{9.1.2}
\sum_{\lambda\in\Lambda_k(X)}\theta[\lambda(X)]=\begin{cases}
|\Lambda_k(X)|, &\text{if the $k$-th row of $X$ is zero;} \\
-1, &\text{if the $k$-th row of $X$ is non-zero.}
\end{cases}
\end{equation} 
From $X\in\T(n,\F_q)$ it is also easy to conclude that
\begin{equation}\label{9.1.3}
|\Lambda_k(X)|=q^{n-k-b_k(X)}-1\,, 
\end{equation}
where $b_k(X)$ is the number of non-zero entries of $X$ below the $k$-th row. 
Moreover, the rank of $X$ is equal to the number of its non-zero entries. The theorem now follows
if we combine \eqref{9.1.1}--\eqref{9.1.3}. 
\end{proof}

We next compute the multiplicity of each cluster character in the discrete series character.
We will need the following modified version of Lemma \ref{6.1}:

\begin{lemma}\label{9.2}
Let $G$ and $H$ be finite groups. Let $S$ be a finite set on which $G$ has a left action and
$H$ has a right action. Let $S'$ be a subset of $S$ which is closed under the action of $G$.
Then for any element $s$ of $S$,
\[
|GsH\cap S'|=\frac{|Gs|\cdot |sH\cap S'|}{|Gs\cap sH|}\,.
\]
\end{lemma}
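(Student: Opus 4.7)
The plan is to adapt the standard fiber-counting proof of Lemma~\ref{6.1}, exploiting the $G$-invariance of $S'$ to restrict attention to the $G$-saturated subset $GsH \cap S'$. As a sanity check, setting $S' = S$ should recover Lemma~\ref{6.1} exactly.

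The key observation is that the $G$-invariance of $S'$ yields
\[
GsH \cap S' \;=\; G \cdot (sH \cap S'),
\]
since for any $g \in G$ and $h \in H$ the element $gsh$ lies in $S'$ if and only if $sh$ does (apply $g^{-1}$). Thus $GsH \cap S'$ is a disjoint union of the $G$-orbits $G(sh)$ for which $sh \in sH \cap S'$. Right multiplication by $h$ is a bijection of $S$ commuting with the left $G$-action, and so it carries $Gs$ bijectively onto $G(sh)$ and $sH$ bijectively onto itself; in particular each such orbit has size $|Gs|$ and satisfies
\[
|G(sh) \cap sH| \;=\; |Gs \cap sH|,
\]
the intersection being automatically contained in $S'$ because $G(sh) \subseteq S'$. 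Letting $N$ denote the number of $G$-orbits appearing, counting $|GsH \cap S'|$ and $|sH \cap S'|$ in terms of $N$ gives $N \cdot |Gs|$ and $N \cdot |Gs \cap sH|$ respectively; eliminating $N$ yields the claimed identity.

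The step I expect to require the most care, though not genuinely difficult, is verifying cleanly that right multiplication by $h$ commutes with the $G$-action (so that it sends $G$-orbits to $G$-orbits) and transports $Gs \cap sH$ onto $G(sh) \cap sH$ rather than onto some other subset. Once this bookkeeping is in place the double count goes through exactly as in Lemma~\ref{6.1}.
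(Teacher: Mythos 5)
Your argument is correct: the identity $GsH\cap S'=G\cdot(sH\cap S')$ from $G$-invariance, the observation that right translation by $h$ commutes with the $G$-action and carries $Gs\cap sH$ onto $G(sh)\cap sH$, and the double count via the number $N$ of $G$-orbits all go through (note $s\in Gs\cap sH$, so the denominator is never zero, and the case $sH\cap S'=\emptyset$ gives $0=0$). The paper states this lemma without proof, as a modification of the ``well-known'' Lemma~\ref{6.1}, and your fiber-counting argument is precisely the standard proof it implicitly relies on.
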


For $\tau\in\T^*(n,\K)$ let $\R(\tau)$ be as in Section \ref{sec4.3}.
For $1\le k\le n-1$ let $\R_k(\tau)$ be the collection of elements $\lambda\in\R(\tau)$ such that
$\supp(\lambda)$ contains only positions (if any) in the $k$-th row. Clearly $\R_k(\tau)$ is a subspace
of $\R(\tau)$, and
\begin{eqnarray*}
\R(\tau) & = & \bigoplus_k \R_k(\tau) \,.
\end{eqnarray*}
Let $d(k,\tau)$ be the dimension of $\R_k(\tau)$. Then
\begin{eqnarray}\label{9.3.2}
d(\tau) & = & \sum_k d(k,\tau) \,.
\end{eqnarray}
By Proposition \ref{4.7}, $d(k,\tau)$ is equal to the number of positions $(i,j)\in\supp(\tau)$ such that $i<k<j$.

\begin{theorem}\label{9.3}
The discrete series character $\delta(n,\F_q)$ is equal to a sum of cluster characters. For
$\tau\in\T^*(n,\F_q)$ the multiplicity of $\chi(\tau)$ in $\delta(n,\F_q)$ is equal to
\[
q^{d(\tau)-i(\tau)}\cdot\prod_{k\in Z(\tau)}\left[ 1-\frac{1}{q^{d(k,\tau)}}\right]\,,
\]
where $Z(\tau)$ is the set of indices $1\le k\le n-1$ such that $\supp(\tau)$
contains no position in the $k$-th row.
\end{theorem}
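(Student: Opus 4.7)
The plan is to interpret the multiplicity combinatorially as a count of left orbits, then reduce that count via Lemma \ref{9.2} to a count inside the right orbit $R(\tau)$, where a row-by-row product formula is easy to write down.

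Since $\Delta(n,\F_q)$ is closed under the left action of $U(n,\F_q)$, the intersection $\Psi(\tau)\cap\Delta(n,\F_q)$ is a union of left orbits, and each such left orbit $L$ contributes a cluster submodule $V(L)\subset D(n,\F_q)$ whose character is $\chi(\tau)$. Hence the multiplicity of $\chi(\tau)$ in $\delta(n,\F_q)$ equals the number of left orbits in $\Psi(\tau)\cap\Delta(n,\F_q)$. By Theorem \ref{6.2} every left orbit in $\Psi(\tau)$ has size $|L(\tau)|=q^{d(\tau)}$, so the multiplicity I want is
\[
m(\tau)\;=\;\frac{|\Psi(\tau)\cap\Delta(n,\F_q)|}{q^{d(\tau)}}\,.
\]

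To evaluate the numerator, I would apply Lemma \ref{9.2} with $G=H=U(n,\F_q)$, $S=\u^*(n,\F_q)$, $s=\tau$, and $S'=\Delta(n,\F_q)$. Using $|L(\tau)|=q^{d(\tau)}$ and $|L(\tau)\cap R(\tau)|=q^{i(\tau)}$ from Theorem \ref{6.2}, this gives
\[
|\Psi(\tau)\cap\Delta(n,\F_q)|\;=\;q^{d(\tau)-i(\tau)}\cdot |R(\tau)\cap\Delta(n,\F_q)|\,,
\]
and therefore $m(\tau)\;=\;q^{-i(\tau)}\cdot |R(\tau)\cap\Delta(n,\F_q)|$.

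The remaining and main task is to evaluate $|R(\tau)\cap\Delta(n,\F_q)|$. Writing an element of $R(\tau)=\tau+\R(\tau)$ as $\tau+\sum_{k=1}^{n-1}\mu_k$ with $\mu_k\in\R_k(\tau)$, Proposition \ref{4.7} together with the template hypothesis on $\tau$ forces $\supp(\mu_k)$ to sit in row $k$ and to be disjoint from $\supp(\tau)$ there (each column used by $\mu_k$ has its unique $\tau$-entry in a row strictly above $k$). Hence row $k$ of $\supp(\tau+\sum_k\mu_k)$ is non-empty if and only if either $k\notin Z(\tau)$, or $k\in Z(\tau)$ with $\mu_k\ne 0$. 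Since the $\mu_k$ are independent and $|\R_k(\tau)|=q^{d(k,\tau)}$, I obtain
\[
|R(\tau)\cap\Delta(n,\F_q)|\;=\;\prod_{k\notin Z(\tau)}q^{d(k,\tau)}\cdot\prod_{k\in Z(\tau)}\bigl(q^{d(k,\tau)}-1\bigr)\;=\;q^{d(\tau)}\prod_{k\in Z(\tau)}\Bigl[1-\tfrac{1}{q^{d(k,\tau)}}\Bigr],
\]
where the second equality uses \eqref{9.3.2}. Combining with the previous display gives the claimed formula for $m(\tau)$.

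The only delicate step, and the one I expect to be the main obstacle, is the support-disjointness assertion that legitimizes the row-by-row product: one must verify carefully, using the template property of $\tau$ and the description of $\R(\tau)$ in Proposition \ref{4.7}, that no entry of $\mu$ ever lands on a non-zero entry of $\tau$ and that the row decomposition $\R(\tau)=\bigoplus_k\R_k(\tau)$ really does give independent choices. Once this combinatorial verification is in place, the rest is an assembly of identities from Sections \ref{sec4.3} and \ref{degree}.
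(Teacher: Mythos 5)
Your proposal is correct and follows essentially the same route as the paper: identify the multiplicity with the number of left orbits in $\Psi(\tau)\cap\Delta(n,\F_q)$, apply Lemma \ref{9.2} to reduce to counting $|R(\tau)\cap\Delta(n,\F_q)|$, and evaluate that count row by row via the decomposition $\R(\tau)=\bigoplus_k\R_k(\tau)$. The ``delicate step'' you flag (that $\supp(\mu_k)$ avoids $\supp(\tau)$, so row $k$ of $\tau+\sum_k\mu_k$ is non-empty exactly when $k\notin Z(\tau)$ or $\mu_k\ne 0$) is indeed the content of Proposition \ref{4.7} combined with the template property, and your verification of it is sound.
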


\begin{proof}
The fact that $\delta(n,\F_q)$ is equal to a sum of cluster characters is explained after its definition
and already used in the proof of Theorem \ref{9.1}. Let $\Psi(\tau)$ be the coadjoint cluster of $\tau$,
and let $L(\tau)$ and $R(\tau)$ be the respective orbits of $\tau$ under the left and right actions of $U(n,\F_q)$. The 
multiplicity of $\chi(\tau)$ in $\delta(n,\F_q)$ is equal to the number of orbits in
$\Psi(\tau)\cap\Delta(n,\F_q)$ under the left action, and this number is equal to $|\Psi(\tau)\cap\Delta(n,\F_q)|/|L(\tau)|$.
By Lemma \ref{9.2},
\[
|\Psi(\tau)\cap\Delta(n,\F_q)|=\frac{|L(\tau)|\cdot |R(\tau)\cap\Delta(n,\F_q)|}{|L(\tau)\cap R(\tau)|}\,,
\]
so the multiplicity is equal to $|R(\tau)\cap\Delta(n,\F_q)|/|L(\tau)\cap R(\tau)|$. We know from Section \ref{sec4.3} that
$|L(\tau)\cap R(\tau)|=q^{i(\tau)}$, so the theorem is reduced to
\begin{equation}\label{9.3.1}
\qquad |R(\tau)\cap\Delta(n,\F_q)|=
q^{d(\tau)}\cdot\prod_{k\in Z(\tau)}\left[ 1-\frac{1}{q^{d(k,\tau)}}\right].
\end{equation}
We can write each element $\lambda\in R(\tau)$ as a sum
\[
\lambda=\tau+\sum_k\lambda_k 
\]
where $\lambda_k\in\R_k(\tau)$. Then $\lambda\in\Delta(n,\F_q)$ if and only if $\lambda_k\not=0$ for every
$k\in Z(\tau)$. Thus the number of elements in $R(\tau)\cap\Delta(n,\F_q)$ is equal to
\[
\left[\prod_{k\in Z(\tau)}(q^{d(k,\tau)}-1)\right]\cdot\left[\prod_{k\not\in Z(\tau)}q^{d(k,\tau)}\right]\,,
\]
and by \eqref{9.3.2} this is equal to
\[
q^{d(\tau)}\cdot\prod_{k\in Z(\tau)}\left[ 1-\frac{1}{q^{d(k,\tau)}}\right],
\]
so we obtain \eqref{9.3.1}. \end{proof}

\begin{remark}
It is interesting to compare Theorem \ref{9.3} with Corollary \ref{6.3}.
\end{remark}

We say $\tau\in\T^*(n,\K)$ is {\it degenerate} if $e(\tau)$ can be written in block diagonal form
\[
e(\tau)=\left(\begin{array}{cc}
e(\tau_1) & 0 \\
{} & e(\tau_2)
\end{array}\right)
\]
with $\tau_1\in\T^*(k,\K)$ and $\tau_2\in\T^*(n-k,\K)$ for some $1\le k\le n-1\,$.  
Equivalently, $\tau$ is degenerate if $d(k,\tau)=0$ for some $k\in Z(\tau)$.

\begin{corollary}
For $\tau\in\T^*(n,\F_q)$ the cluster character $\chi(\tau)$ is contained in 
the discrete series character if and only if $\tau$ is non-degenerate.
\end{corollary}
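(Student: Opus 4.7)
The plan is to read off the corollary directly from the multiplicity formula in Theorem \ref{9.3}, which says that the multiplicity of $\chi(\tau)$ in $\delta(n,\F_q)$ equals
\[
q^{d(\tau)-i(\tau)}\cdot\prod_{k\in Z(\tau)}\left[1-\frac{1}{q^{d(k,\tau)}}\right].
\]
The prefactor $q^{d(\tau)-i(\tau)}$ is a strictly positive integer (since $\L(\tau)\cap\R(\tau)\subseteq\R(\tau)$ forces $i(\tau)\le d(\tau)$, as established in Section \ref{sec4.3}), so the multiplicity is nonzero exactly when every factor $1-q^{-d(k,\tau)}$ in the product over $Z(\tau)$ is nonzero. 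Since such a factor equals zero when $d(k,\tau)=0$ and is strictly positive when $d(k,\tau)\ge 1$, the multiplicity is nonzero if and only if $d(k,\tau)\ge 1$ for every $k\in Z(\tau)$.

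It remains to match this criterion with the definition of non-degeneracy. The paper records the equivalent combinatorial reformulation that $\tau$ is degenerate iff $d(k,\tau)=0$ for some $k\in Z(\tau)$, and the corollary then follows at once. To justify the reformulation: the two conditions $k\in Z(\tau)$ (no support position in row $k$) and $d(k,\tau)=0$ (no support position $(i,j)$ with $i<k<j$) together forbid any $(i,j)\in\supp(\tau)$ with $i\le k<j$, which is precisely the vanishing of the upper-right $k\times(n-k)$ block of $e(\tau)$; since $\tau\in\T^*(n,\K)$ is already a template, this block vanishing is the block-diagonal form in the definition. The reverse implication is immediate by reading the argument backwards.

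There is no serious obstacle. The only small technical point is the brief translation between the block-diagonal description of degeneracy and the condition ``$d(k,\tau)=0$ for some $k\in Z(\tau)$''; this is a short combinatorial check given that the ranks $d(k,\tau)$ have already been identified with counts of support positions in Section \ref{sec4.3}. Everything else is a one-step application of Theorem \ref{9.3}.
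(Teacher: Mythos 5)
Your proof is correct and follows exactly the route the paper intends: the corollary is an immediate consequence of the multiplicity formula in Theorem \ref{9.3}, together with the equivalence (stated in the paper right before the corollary) between the block-diagonal definition of degeneracy and the condition that $d(k,\tau)=0$ for some $k\in Z(\tau)$. Your brief verification of that equivalence is accurate and fills in the only detail the paper leaves implicit.
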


\section{Appendix}

\subsection{The character formula}

Arias-Castro, Diaconis and Stanley \cite[Theorem 2.2]{super-class-walk} 
and Andr\'e \cite[Theorem 3]{Carlos02}
gave a remarkable closed
form formula for the cluster characters. We can not resist the temptation to give this
formula a new formulation and a new proof in the framework of this paper.

For a coadjoint template $\tau\in\T^*(n,\K)$ and an adjoint template $X\in\T(n,\K)$,
define the index $i(X,\tau)$ to be the number of $\Gamma$-shaped hooks as in the following
diagram:
\[
\begin{array}{ccc}
(i,j) & \rightarrow & (i,j')\\
\downarrow & {} & {} \\
(i',j) & {} & {} 
\end{array}
\]
where the right arrow points to a position in $\supp(\tau)$ while the down arrow points to the
position of a non-zero entry in $X$.

\begin{theorem}
For a coadjoint template $\tau\in\T^*(n,\F_q)$ and an adjoint template $X\in\T(n,\F_q)$,
the cluster character $\chi(\tau)$ has the following value at $I+X$:
\[
\chi(\tau)(I+X)\,=\,\begin{cases}
q^{d(\tau)-i(X,\tau)}\cdot \theta[\tau(X)], 
& \text{if $X$ has no non-zero entry below or} \\
& \text{to the left of any position in $\supp(\tau)$;} \\
0, & \text{otherwise.}
\end{cases}
\]
\end{theorem}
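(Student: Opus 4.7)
The plan is to evaluate $\chi(\tau)(I+X)$ via the primary decomposition of $\chi(\tau)$. Combining the character formula (6.0.1) with equation (7.1.5) from the proof of Theorem 7.1, one obtains
\[
\chi(\tau)(I+X)\;=\;q^{-d(\tau)}\prod_{(i,j)\in\supp(\tau)}S_{ij}(X),\qquad S_{ij}(X):=\sum_{\mu\in\tau(e_{ij})\cdot\E_{ij}}\theta[\mu(X)].
\]
The theorem then reduces to a sharp evaluation of each primary factor $S_{ij}(X)$ followed by a combinatorial matching of the resulting exponent of $q$ with $i(X,\tau)$.

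To evaluate $S_{ij}(X)$ I would parameterize $\tau(e_{ij})\cdot\E_{ij}$ as in the proof of Lemma 4.3: with $a=\tau(e_{ij})$, every element is uniquely
\[
\mu\;=\;a\e_{ij}+\sum_{i<k<j}a_k\e_{kj}+\sum_{i<l<j}a'_l\e_{il}+a^{-1}\!\!\sum_{i<k<l<j}a_k a'_l\e_{kl}\,,
\]
so $\mu(X)=ax_{ij}+\sum_k a_k x_{kj}+\sum_l a'_l x_{il}+a^{-1}\sum_{k<l}a_k a'_l x_{kl}$ is constant plus linear plus bilinear in $(\vec a,\vec{a'})\in\F_q^{j-i-1}\times\F_q^{j-i-1}$. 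Summing out $\vec a$ first yields the factor $q^{j-i-1}$ and forces the linear constraints $x_{kj}+a^{-1}\sum_{l>k}a'_l x_{kl}=0$ for every $i<k<j$. If $x_{pj}\ne 0$ for some $i<p<j$ (a ``below'' violation), then $X\in\T(n,\F_q)$ gives $x_{pl}=0$ for all $l\ne j$, so the $k=p$ constraint degenerates to $x_{pj}=0$, a contradiction, forcing $S_{ij}(X)=0$. The symmetric argument, summing out $\vec{a'}$ first, handles a ``to the left'' violation $x_{iq}\ne 0$ with $i<q<j$. This settles the vanishing half of the theorem.

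In the non-vanishing case all linear terms drop out and $S_{ij}(X)=\theta[ax_{ij}]\cdot\sum_{\vec a,\vec{a'}}\theta\bigl[a^{-1}\vec a^{\,T}M_{ij}\vec{a'}\bigr]$, where $M_{ij}$ is the $(j-i-1)\times(j-i-1)$ matrix whose $(k,l)$-entry is $x_{kl}$ for $k<l$ and zero otherwise. Because $X$ is a template, $M_{ij}$ has at most one non-zero per row and column, so $\rank M_{ij}$ equals its number of non-zero entries, and the standard bilinear Gauss sum evaluates to $q^{2(j-i-1)-\rank M_{ij}}$. Multiplying over $\supp(\tau)$ and using $d(\tau)=\sum_{(i,j)\in\supp(\tau)}(j-i-1)$ produces
\[
\chi(\tau)(I+X)\;=\;q^{\,d(\tau)-\Sigma}\cdot\theta[\tau(X)],\qquad\Sigma:=\sum_{(i,j)\in\supp(\tau)}\rank M_{ij}.
\]

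The final step, which I expect to be the main obstacle, is the combinatorial identity $\Sigma=i(X,\tau)$. Expanding, $\Sigma$ counts pairs $\bigl((p,j'),(k,l)\bigr)$ with $(p,j')\in\supp(\tau)$, $x_{kl}\ne 0$, and $p<k<l<j'$. Setting $(i',q):=(k,l)$ translates these inequalities into the $\Gamma$-hook conditions $i'>p$, $q<j'$, and $i'<q$ (the last automatic since $X$ lies above the diagonal). Because $\tau$ and $X$ are both templates, row $p$ determines $(p,j')$ uniquely and column $q$ determines $(i',q)$ uniquely, so the assignment $\bigl((p,j'),(i',q)\bigr)\mapsto(p,q)$ is a bijection onto the set of $\Gamma$-hook corners, yielding $\Sigma=i(X,\tau)$ and completing the proof.
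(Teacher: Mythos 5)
Your proof is correct, but it takes a genuinely different route from the paper's. The paper never leaves the left orbit: starting from equation \eqref{3.0.3} it uses Proposition \ref{9.4} to identify which $\lambda\in L(\tau)$ are fixed by $g=I+X$ and Proposition \ref{4.7} to describe $L(\tau)=\tau+\L(\tau)$ explicitly, after which the value is a single short exponential sum that vanishes in the ``left'' case and equals $q^{d(\tau)-i(X,\tau)}\cdot\theta[\tau(X)]$ otherwise. You instead push everything through the primary factorization: combining \eqref{6.0.1} with \eqref{7.1.5} to write $\chi(\tau)(I+X)$ as $q^{-d(\tau)}$ times a product of primary cluster sums, parameterizing each primary cluster as in the proof of Lemma \ref{4.3}, and evaluating each factor either as $0$ (when a linear term survives, which you correctly identify with a ``below'' or ``to the left'' violation at that particular position of $\supp(\tau)$) or as the bilinear Gauss sum $q^{2(j-i-1)-\rank M_{ij}}$. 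Your route is longer and more computational, but it buys two things: the vanishing condition is localized position by position, and $i(X,\tau)$ appears transparently as a sum of ranks of bilinear forms, with the $\Gamma$-hook count recovered by a bijection that genuinely uses the template property of both $\tau$ and $X$. All the steps check out --- the rank-equals-number-of-nonzero-entries claim, the identity $d(\tau)=\sum_{(i,j)\in\supp(\tau)}(j-i-1)$, and the final bijection onto hook corners are all valid. The one point you should make explicit rather than assert is that the parameterization of $a\cdot\E_{ij}$ by $(\vec a,\vec a')$ is a bijection, not merely a surjection; this follows from $|a\cdot\E_{ij}|=q^{2(j-i-1)}$, which Theorem \ref{6.2} gives since $d(a\cdot\e_{ij})=j-i-1$ and $i(a\cdot\e_{ij})=0$. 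Without uniqueness your $S_{ij}(X)$ could overcount the cluster by a multiplicity, and the normalization $q^{-d(\tau)}$ would be off.
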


\begin{proof}
Write $g=I+X$, and let $L(\tau)$ be the orbit of $\tau$ under the left action of $U(n,\F_q)$.
Equation \eqref{3.0.3} says
\[
\chi(\tau)(g)\,=\, \sum_{\lambda} v(\lambda)(g) \,=\,\sum_{\lambda} \theta[\lambda(X)],
\]
where the sum is over all elements $\lambda\in L(\tau)$ which are fixed under the left action of $g$.
There are three cases to consider:
\begin{itemize}
\item[(1)]
$X$ has a non-zero entry below some position in $\supp(\tau)$.
\item[(2)]
$X$ has a non-zero entry to the left of some position in $\supp(\tau)$.
\item[(3)]
$X$ has no non-zero entry below or to the left of any position in $\supp(\tau)$.
\end{itemize}
In case (1), Propositions \ref{4.7} and \ref{9.4} imply that no $\lambda\in L(\tau)$ can satisfy
$g*\lambda=\lambda$, thus $\chi(\tau)(g)=0$. In cases (2) and (3), by the same two propositions
we can write
\[
\chi(\tau)(g)\,=\, \theta[\tau(X)]\cdot\sum_{\lambda} \theta[\lambda(X)],
\]
where the sum is over those $\lambda\in\u^*(n, \F_q)$ such that $\supp(\lambda)$ contains only positions
(if any) to the left of $\supp(\tau)$ and no position above any non-zero entry of $X$. Note that the
number of $\lambda$'s with this property is precisely $q^{d(\tau)-i(X,\tau)}$. It is straightforward
to verify that the sum is zero in case (2) and $q^{d(\tau)-i(X,\tau)}$ in case (3). So we obtain the formula.
\end{proof}


\subsection{Supercharacter theory for finite groups}

A supercharacter theory for a finite group $G$ (as introduced by Diaconis and Isaacs in \cite{Supercharacters})
is defined by a collection $\mathcal{K}$ of non-empty subsets in $G$ and a collection $\mathcal{X}$ of 
complex-valued functions on $G$, with the following properties:
\begin{itemize}
\item[(1)]
The members of $\mathcal{K}$ (called {\it superclasses}) are mutually disjoint, each is invariant under conjugation by $G$, and their union
is equal to $G$.
\item[(2)]
The members of $\mathcal{X}$ (called {\it supercharacters}) are mutually orthogonal, each is the character of a complex linear representation of
$G$, and their sum is equal to the regular character of $G$.
\item[(3)]
Each supercharacter is constant on each superclass. 
\item[(4)]
$|\mathcal{X}|=|\mathcal{K}|$.
\end{itemize}  

The cluster method introduced in this paper leads to a specific supercharacter theory for
$U(n,\F_q)$, in which the superclasses are the conjugacy clusters and the supercharacters are (up to constant multiples) the
cluster characters. Clearly supercharacter theory has a much broader scope. It is an interesting project
to find other natural instances of supercharacter theory.

\section*{Acknowledgements}

This paper is based on my Ph.D. dissertation at the University of Pennsylvania. 
My deepest gratitude goes to Alexandre Kirillov, who kindly took me up as his student, generously
passed on to me his fascination with the triangular matrices, and continuously 
provided me with inspiration. I am also deeply grateful to Alexei Borodin, Persi Diaconis and Chunwei Song 
for their help, support and encouragement.

\nocite{*}
\bibliographystyle{abbrv}
\bibliography{cluster}
\pagebreak

\end{document}